\numberwithin{equation}{section} 
\numberwithin{figure}{section} 
\theoremstyle{plain}
\theoremstyle{plain}
\newtheorem{thm}{Theorem}
  \theoremstyle{definition}
  \newtheorem{defn}[thm]{Definition}
  \theoremstyle{plain}
  \newtheorem{lem}[thm]{Lemma}
  \theoremstyle{plain}
  \newtheorem{prop}[thm]{Proposition}
  \theoremstyle{plain}
  \newtheorem{cor}[thm]{Corollary}
\numberwithin{equation}{section}
\numberwithin{thm}{section}
\begin{document}

\title{Confluent Operator Algebras and the Closability Property}

\author{H. Bercovici, R. G. Douglas, C. Foias, and C. Pearcy}

\address{HB: Department of Mathematics, Indiana University, Bloomington, IN
47405}

\email{bercovic@indiana.edu}

\address{RGD, CF, and CP: Department of Mathematics, Texas A\&M University,
College Station, TX 77843}

\email{rdouglas@math.tamu.edu\\
foias@math.tamu.edu\\
pearcy@math.tamu.edu}

\dedicatory{This paper is dedicated to the memory of our good friends and mentors,
Paul R. Halmos and B\'ela Sz\H{o}kefalvi-Nagy.}

\thanks{HB and RGD were supported in part by grants from the National Science
Foundation.}

\keywords{confluent algebra, closability property, completely nonunitary contraction,
rationally strictly cyclic vector, quasisimilarity}

\subjclass[2000]{Primary: 47L10; Secondary: 47A45, 47L40}
\begin{abstract}
Certain operator algebras $\mathcal{A}$ on a Hilbert space have the
property that every densely defined linear transformation commuting
with $\mathcal{A}$ is closable. Such algebras are said to have the
\emph{closability property}. They are important in the study of the
transitive algebra problem. More precisely, if $\mathcal{A}$ is a
two-transitive algebra with the closability property, then $\mathcal{A}$
is dense in the algebra of all bounded operators, in the weak operator
topology. In this paper we focus on algebras generated by a completely
nonunitary contraction, and produce several new classes of algebras
with the closability property. We show that this property follows
from a certain strict cyclicity property, and we give very detailed
information on the class of completely nonunitary contractions satisfying
this property, as well as a stronger property which we call \emph{confluence}.
\end{abstract}
\maketitle

\section{Introduction\label{sec:Introduction}}

\markboth{}{}Probably the best known problem in operator theory is
the question of whether every bounded linear operator on a complex,
separable, infinite dimensional Hilbert space $\mathcal{H}$ has a
nontrivial invariant subspace. Despite considerable effort by many
researchers for more than half a century, the general problem remains
open. A generalization, still unresolved, asks whether every transitive
algebra of operators must be dense in the weak operator topology.
(Recall an algebra is said to be transitive if there are no common
nontrivial invariant subspaces for the operators in it.)

In the sixties, Arveson approached this problem iteratively, starting
from an observation going back essentially to von Neumann. Namely,
assume that $\mathcal{A}$ is an algebra of operators on a Hilbert
space $\mathcal{H}$, and $n\ge1$ is an integer. The algebra $\mathcal{A}$
is said to be $n$-transitive if every invariant subspace for \[
\mathcal{A}^{(n)}=\{X^{(n)}=\underbrace{X\oplus X\oplus\cdots\oplus X}_{n\text{ times}}:X\in\mathcal{A}\}\]
is invariant for every operator of the form $Y^{(n)}$ where $Y$
is an operator on $\mathcal{H}$. Then $\mathcal{A}$ is dense, in
the weak operator topology, if and only if it is $n$-transitive for
every $n\ge1$. Arveson observed that 2-transitivity is equivalent
to the following statement: every \emph{closed} linear transformation
commuting with $\mathcal{A}$ is a scalar multiple of the identity
operator. For $n\ge3$, $n$-transitivity is implied by a similar
statement for \emph{densely defined} linear transformations commuting
with $\mathcal{A}$. Thus, provided that every densely defined linear
transformation commuting with $\mathcal{A}$ is closable, 2-transitivity
implies $n$-transitivity for all $n$. As a consequence, Arveson
was able to prove that  transitive algebras containing certain kinds
of subalgebras are indeed dense in the weak operator topology. His
results apply to algebras on an $L^{2}$-space, containing the algebra
$L^{\infty}$ of all bounded measurable multipliers, or on the Hardy
space $H^{2}(\mathbb{D})$, containing the algebra $H^{\infty}(\mathbb{D})$.
A few similar results were obtained by others for closely related
algebras in the following years; see for instance \cite[Chapter 8]{rad-ros}.

A year ago, Haskell Rosenthal became interested in the question of
which algebras of operators on Hilbert space had what he called the
\emph{closability property} which means that every densely defined
linear transformation in its commutant is closable. A key step in
Arveson's proofs was to show that the algebras $L^{\infty}$ acting
on $L^{2}$, and $H^{\infty}(\mathbb{D})$ acting on $H^{2}(\mathbb{D})$,
have the closability property. Rosenthal showed that various algebras
have the closability property and asked the authors a specific followup
question. In finding the answer, the question piqued our interest
which resulted in a series of questions related to this topic. Our
investigation took us in some unexpected directions, making surprising
connections with other topics in operator theory.

After some preliminaries in Section \ref{sec:Preliminaries}, we begin
in Section \ref{sec:The-Closability-Property} by investigating the
closability property and determining some algebras which have it,
as well as some that do not. In Section \ref{sec:Rationally-Strictly-Cyclic}
we introduce the concept of a rationally strictly cyclic vector, and
show that the existence of such a vector for a commutative algebra
$\mathcal{A}$ implies the closability property. In Section \ref{sec:Quasisimilar-Algebras}
we discuss the invariance of the closability property, and of the
existence of rationally strictly cyclic vectors, under an appropriate
notion of quasisimilarity. We deduce, for instance, that the commutant
of any contraction of class $C_{0}$ has the closability property.
In the course of our study, the importance of something like a functional
calculus for quotients became clear. To make this idea precise, in
Section \ref{sec:Confluent-Algebras-of} we study the related notion
of confluence (introduced in Section \ref{sec:Rationally-Strictly-Cyclic})
as it applies to the algebra obtained by applying the $H^{\infty}$
functional calculus to a completely nonunitary contraction. Confluence
implies the existence of a rationally strictly cyclic vector, and
therefore the closability property as well. Section \ref{sec:Confluence-and-Functional-mod}
contains a thorough study of confluence in the context of functional
models for contractions. In particular, a characterization is obtained
for those contractions which are quasisimilar to the unilateral shift
of multiplicity one. This characterization involves the `size' of
the analytic functions in the reproducing kernel representative for
the operator.

The analysis of confluence is somewhat subtle and rests on the harmonic
analysis of contractions \cite{Sz-N-F}, the theory of the class $C_{0}$
\cite{Berco}, the theory of dual algebras \cite{B-F-P}, and results
about the class $\mathcal{B}_{1}({\mathbb{D}})$ \cite{C-D}.

We thank Haskell Rosenthal for the questions which led to this research.

\section{Preliminaries\label{sec:Preliminaries}}

We will work with operators on Hilbert spaces over the complex numbers
$\mathbb{C}$. The algebra of bounded linear operators on a Hilbert
space $\mathcal{H}$ is denoted $\mathcal{L}(\mathcal{H})$. Given
$T\in\mathcal{L}(\mathcal{H})$, $\mathcal{P}_{T}$ denotes the smallest
unital algebra containing $T$, that is the set of all polynomials
in $T$. The closure of $\mathcal{P}_{T}$ in the weak operator topology
(also known as WOT) is denoted $\mathcal{W}_{T}$. The norm closure
of a subset $\mathcal{M}\subset\mathcal{H}$ is denoted $\overline{\mathcal{M}}$.
The orthogonal projection onto a closed linear subspace $\mathcal{M}\subset\mathcal{H}$
is denoted $P_{\mathcal{M}}$.

Several special operators play an important role. The space $L^{2}$
is the space of functions defined on the unit circle $\mathbb{T}$
which are square integrable relative to arclength measure. The bilateral
shift $U\in\mathcal{L}(L^{2})$ is the unitary operator defined by
$(Uf)(\zeta)=\zeta f(\zeta)$ for $f\in L^{2}$ and a.e. $\zeta\in\mathbb{T}$.
The Hardy space $H^{2}\subset L^{2}$ is the cyclic space for $U$
generated by the constant function $1$, and $S\in\mathcal{L}(H^{2})$
is the unilateral shift of multiplicity 1 defined as $S=U|H^{2}$.
More generally, denote by $H^{\infty}=H^{\infty}(\mathbb{D})$, the
algebra of bounded analytic functions in the unit disk $\mathbb{D}$.
For each $u\in H^{\infty}$ one defines an analytic Toeplitz operator
$T_{u}\in\mathcal{L}(H^{2})$ as the operator of pointwise multiplication
by $u$. Here one takes advantage of the fact that functions in $H^{\infty}$
have a.e. defined radial limits on $\mathbb{T}$.

Given a subset $\mathcal{A}\subset\mathcal{L}(\mathcal{H})$, $\mathcal{A}'$
denotes the set of operators commuting with every element of $\mathcal{A}$.
The set $\mathcal{A}'$ is called the commutant of $\mathcal{A}$,
and it is an algebra, closed in the weak operator topology. An important
example is\[
\{S\}'=\mathcal{W}_{S}=\{T_{u}:u\in H^{\infty}\}.\]

A function $m\in H^{\infty}$ is inner if $|m(\zeta)|=1$ for a.e.
$\zeta\in\mathbb{T}$. For every inner function $m\in H^{\infty}$,
the space $mH^{2}=T_{m}H^{2}$ is closed and invariant for $S$. The
compression of $S$ to $\mathcal{H}(m)=H^{2}\ominus mH^{2}$ is denoted
$S(m)$. In other words, $S(m)=P_{\mathcal{H}(m)}S|\mathcal{H}(m)$
or, equivalently, $S(m)^{*}=S^{*}|\mathcal{H}(m)$. Another important
example of a commutant is\[
\{S(m)^{*}\}'=\mathcal{W}_{S(m)^{*}}=\{T_{u}^{*}|\mathcal{H}(m):u\in H^{\infty}\}.\]
This was proved by Sarason.

An operator $T\in\mathcal{L}(\mathcal{H})$ is a contraction if $\|T\|\le1$.
A contraction $T$ is completely nonunitary if it has no invariant
subspace on which it acts as a unitary operator. For completely nonunitary
contractions $T$, there is a homomorphism $u\mapsto u(T)\in\mathcal{L}(\mathcal{H})$
which extends the polynomial functional calculus to functions $u\in H^{\infty}$.
This is called the Sz.-Nagy---Foias functional calculus. For instance,
$u(S)=T_{u}$, and $u(S(m))=P_{\mathcal{H}(m)}T_{u}|\mathcal{H}(m)$.

A completely nonunitary contraction $T\in\mathcal{L}(\mathcal{H})$
is of class $C_{0}$ if $u(T)=0$ for some $u\in H^{\infty}\setminus\{0\}$.
The ideal $\{u\in H^{\infty}:u(T)=0\}\subset H^{\infty}$ is principal,
and it is generated by an inner function, uniquely determined up to
a constant factor of absolute value $1$. This function is called
the minimal function of $T$. The most basic example is $S(m)$, whose
minimal function is $m$.

We refer the reader to \cite{Sz-N-F} for further background on the
analysis of contractions, to \cite{B-F-P} for dual algebras, and
to \cite{Berco} for the class $C_{0}$. We will refer as needed to
these and other original sources for specific results.

\section{The Closability Property\label{sec:The-Closability-Property}}

Consider a unital subalgebra $\mathcal{A}$ of the algebra $\mathcal{L}(\mathcal{H})$
of bounded operators on the complex Hilbert space $\mathcal{H}$.
The algebra $\mathcal{A}$ is not assumed to be norm closed.
\begin{defn}
\label{def:commuting-unbounded}A linear transformation $X\colon\mathcal{D}(X)\to\mathcal{H}$
is said to \emph{commute} with $\mathcal{A}$ if for every $h\in\mathcal{D}(X)$
and every $T\in\mathcal{A}$ we have $Th\in\mathcal{D}(X)$ and \[
XTh=TXh.\]

\end{defn}
We define now the main concept we study in this paper.
\begin{defn}
\label{def:closability}The algebra $\mathcal{A}$ is said to have
the \emph{closability property} if every linear transformation $X$
which commutes with $\mathcal{A}$, and whose domain $\mathcal{D}(X)$
is dense in $\mathcal{H}$, is closable.
\end{defn}
We recall that a linear transformation $X$ is \emph{closable} if
the closure of its graph\[
\mathcal{G}(X)=\{h\oplus Xh\colon h\in\mathcal{D}(X)\}\]
is again the graph of a linear transformation, usually denoted $\overline{X}$
and called the closure of $X$. Equivalently, given a sequence $h_{n}\in\mathcal{D}(X)$
such that $\lim_{n\to\infty}\|h_{n}\|=0$ and the limit $k=\lim_{n\to\infty}Xh_{n}$
exists, it follows that $k=0$.

The following observation is a trivial consequence of the fact that
a linear transformation commuting with an algebra also commutes with
smaller algebras.
\begin{lem}
\label{lem:subalg-and-commutant}Assume that $\mathcal{A}\subset\mathcal{B}\subset\mathcal{L}(\mathcal{H})$
are unital algebras. If $\mathcal{A}$ is has the closability property
then so does $\mathcal{B}$. In particular, if $\mathcal{A}$ is a
commutative and has the closability property, then its commutant $\mathcal{A}'$
also has the closability property.
\end{lem}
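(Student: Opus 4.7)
The plan is to unwind the definitions; there is no real obstacle here, which is why the authors flag the lemma as a trivial observation. I would structure the argument in two short steps.

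First, for the main statement, I would take an arbitrary densely defined linear transformation $X\colon \mathcal{D}(X)\to\mathcal{H}$ that commutes with $\mathcal{B}$ in the sense of Definition \ref{def:commuting-unbounded}, and verify that it also commutes with $\mathcal{A}$. This is immediate: if $T\in\mathcal{A}$, then $T\in\mathcal{B}$ by hypothesis, so $Th\in\mathcal{D}(X)$ and $XTh = TXh$ for every $h\in\mathcal{D}(X)$. Since $\mathcal{A}$ is assumed to have the closability property and $\mathcal{D}(X)$ is dense, $X$ is closable. As $X$ was arbitrary, $\mathcal{B}$ has the closability property.

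Second, for the {}``in particular'' statement, I would verify that $\mathcal{A}\subset\mathcal{A}'$ whenever $\mathcal{A}$ is commutative: every $T\in\mathcal{A}$ commutes with every element of $\mathcal{A}$ by hypothesis, so $T\in\mathcal{A}'$. Recall that $\mathcal{A}'$ is always a unital algebra (it is WOT-closed and contains $I$), so the inclusion $\mathcal{A}\subset\mathcal{A}'\subset\mathcal{L}(\mathcal{H})$ satisfies the hypothesis of the first part with $\mathcal{B}=\mathcal{A}'$. Applying what was just proved yields that $\mathcal{A}'$ has the closability property.

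The only place where one might pause is the observation that commuting with a larger algebra implies commuting with any subalgebra — which is essentially the reason the statement is \emph{trivial}. No new analytic input (no use of $H^{\infty}$ functional calculus, no approximation arguments) is needed; the whole proof is a one-line set-theoretic reduction plus the remark that $\mathcal{A}\subset\mathcal{A}'$ for commutative $\mathcal{A}$.
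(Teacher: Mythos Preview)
Your proposal is correct and matches the paper's approach exactly: the authors do not write out a proof but simply remark that the lemma is ``a trivial consequence of the fact that a linear transformation commuting with an algebra also commutes with smaller algebras,'' which is precisely the reduction you spell out. Your handling of the ``in particular'' clause via $\mathcal{A}\subset\mathcal{A}'$ for commutative $\mathcal{A}$ is the intended (and only natural) reading.
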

We start with some examples of algebras which do not have the closability
property. The arguments are based on the following simple fact.
\begin{lem}
\label{lem:two-invariant-manifolds}Let $\mathcal{A}$ be a unital
subalgebra of $\mathcal{L}(\mathcal{H})$. Assume that there exist
linear manifolds $\mathcal{M},\mathcal{N}\subset\mathcal{H}$ such
that
\begin{enumerate}
\item $T\mathcal{M}\subset\mathcal{M}$ and $T\mathcal{N}\subset\mathcal{N}$
for every $T\in\mathcal{A}$;
\item $\mathcal{M}\cap\mathcal{N}=\{0\}$ and $\overline{\mathcal{M}+\mathcal{N}}=\mathcal{H}$;
\item $\overline{\mathcal{M}}\cap\overline{\mathcal{N}}\ne\{0\}$.
\end{enumerate}
Then $\mathcal{A}$ does not have the closability property.

\end{lem}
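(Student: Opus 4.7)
The plan is to construct a concrete densely defined linear transformation commuting with $\mathcal{A}$ that fails to be closable, built directly from the algebraic decomposition $\mathcal{M} + \mathcal{N}$. Set $\mathcal{D}(X) = \mathcal{M} + \mathcal{N}$, which is dense in $\mathcal{H}$ by hypothesis~(2), and define
\[
X(m + n) = m, \qquad m \in \mathcal{M},\ n \in \mathcal{N}.
\]
Condition $\mathcal{M} \cap \mathcal{N} = \{0\}$ from~(2) ensures the decomposition $h = m + n$ is unique, so $X$ is well-defined and linear. This $X$ is essentially the (generally unbounded) idempotent projecting onto $\mathcal{M}$ along $\mathcal{N}$.

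Next I would verify that $X$ commutes with $\mathcal{A}$ in the sense of Definition~\ref{def:commuting-unbounded}. Given $h = m + n \in \mathcal{D}(X)$ and $T \in \mathcal{A}$, condition~(1) gives $Tm \in \mathcal{M}$ and $Tn \in \mathcal{N}$, so $Th = Tm + Tn \in \mathcal{D}(X)$ and
\[
XTh = Tm = TXh.
\]

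Finally I would exhibit a sequence witnessing non-closability using condition~(3). Pick a nonzero vector $h \in \overline{\mathcal{M}} \cap \overline{\mathcal{N}}$ and choose sequences $m_k \in \mathcal{M}$ and $n_k \in \mathcal{N}$ with $m_k \to h$ and $n_k \to h$. Set $h_k = m_k - n_k \in \mathcal{D}(X)$. Then $h_k \to 0$, while $X h_k = m_k \to h \ne 0$, showing that the closure of the graph $\mathcal{G}(X)$ contains $(0, h)$ and is therefore not the graph of a linear transformation. Hence $X$ is not closable, and $\mathcal{A}$ fails the closability property.

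I do not anticipate a serious obstacle here; the only subtlety is confirming that the algebraic sum decomposition $h = m + n$ really is unique (which is precisely why $\mathcal{M} \cap \mathcal{N} = \{0\}$ is assumed), and that invariance of each manifold under every $T \in \mathcal{A}$ yields commutation on the algebraic sum rather than only on $\mathcal{M}$ or $\mathcal{N}$ separately. Both points are immediate from hypotheses~(1) and~(2).
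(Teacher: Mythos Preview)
Your proof is correct and follows essentially the same approach as the paper: define the idempotent on $\mathcal{M}+\mathcal{N}$ associated with the algebraic decomposition and observe that it commutes with $\mathcal{A}$ but cannot be closable because of condition~(3). The only cosmetic difference is that the paper projects onto $\mathcal{N}$ along $\mathcal{M}$ (setting $Xh=0$ for $h\in\mathcal{M}$ and $Xh=h$ for $h\in\mathcal{N}$) while you project onto $\mathcal{M}$ along $\mathcal{N}$; the arguments are otherwise identical, and you spell out the commutation and the witnessing sequence in slightly more detail than the paper does.
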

\begin{proof}
Define a linear transformation with domain $\mathcal{D}(X)=\mathcal{M}+\mathcal{N}$
by setting $Xh=0$ for $h\in\mathcal{M}$ and $Xh=h$ for $h\in\mathcal{N}$.
If $X$ were closable, its closure would satisfy $\overline{X}h=0$
and $\overline{X}h=h$ for any $h\in\overline{\mathcal{M}}\cap\overline{\mathcal{N}}$,
and this is absurd for $h\ne0$.\end{proof}
\begin{prop}
\label{pro:nonCPalgebras}The following algebras do not have the closability
property:
\begin{enumerate}
\item The algebra $\mathcal{P}_{S}$ generated by the unilateral shift $S$.
\item The algebra $\mathcal{P}_{S(m)}$, where $m$ is an inner function
which is not a finite Blaschke product.
\item The WOT-closed algebra $\mathcal{W}_{S^{*}}$.
\item The WOT-closed algebra $\mathcal{W}_{U}$ generated by the bilateral
shift $U$ on $L^{2}$.
\item Any algebra of the form $\mathcal{A}\otimes I_{\mathcal{K}}$, where
$\mathcal{A}\subset\mathcal{L}(\mathcal{H})$ is a unital algebra,
and $\mathcal{K}$ is an infinite dimensional Hilbert space.
\end{enumerate}
\end{prop}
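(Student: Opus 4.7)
The plan is to apply Lemma \ref{lem:two-invariant-manifolds} to each of the five cases, so the work reduces to exhibiting, in each setting, two invariant linear manifolds $\mathcal{M}$ and $\mathcal{N}$ satisfying the three listed hypotheses. In every case the construction follows the same philosophy: take two orbits (or eigenspaces) under the algebra in question, chosen so that their closures are large and overlap while the unclosed manifolds themselves meet only at $0$.

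For cases (1) and (2) the algebras are polynomial, so an orbit has the form $\mathcal{P}_S\cdot v$ or $\mathcal{P}_{S(m)}\cdot v$, and the task is to produce two cyclic vectors whose orbits intersect trivially. In (1) I would take $\mathcal{M}=\mathcal{P}$ and $\mathcal{N}=e^{z}\mathcal{P}$ inside $H^{2}$: both are $\mathcal{P}_S$-invariant; both are dense because $e^{z}$ is outer and hence cyclic for $S$; and the transcendence of $e^{z}$ rules out any relation $p_{1}=e^{z}p_{2}$ with $p_{2}\neq 0$. For (2) I project into $\mathcal{H}(m)$, taking $\mathcal{M}=P_{\mathcal{H}(m)}\mathcal{P}$ and $\mathcal{N}=P_{\mathcal{H}(m)}(e^{z}\mathcal{P})$. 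The only subtle point is disjointness: one must show that a relation $p-e^{z}q\in mH^{2}$ with $p,q$ polynomials forces $p=q=0$. The key observation is that $p-e^{z}q$ is entire, so if nonzero it has only finitely many zeros in $\mathbb{D}$ and no singular inner factor (dividing by the Blaschke product of those zeros produces a zero-free analytic function in $\mathbb{D}$ whose $\log|\cdot|$ is harmonic, hence equals its own Poisson integral of boundary values, which forces the singular factor to be constant). Its inner factor is therefore a finite Blaschke product, and since $m$ is not, $p-e^{z}q$ must vanish; transcendence of $e^{z}$ then gives $p=q=0$. This inner--outer analysis is the main technical obstacle in the proof.

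For (3), I would exploit the fact that each reproducing kernel $k_{\lambda}(z)=(1-\bar\lambda z)^{-1}$ is an eigenvector for the whole algebra, with $T_{u}^{*}k_{\lambda}=\overline{u(\lambda)}k_{\lambda}$. Take $\mathcal{N}$ to be the linear span of $\{k_{\lambda_{n}}\}$ for a sequence of distinct points $\lambda_{n}\in\mathbb{D}$, and $\mathcal{M}=\mathcal{P}$, which is invariant under $\mathcal{W}_{S^{*}}$ because $T_{u}^{*}$ preserves polynomial degree. Nonzero linear combinations of kernels are rational functions with poles at the points $1/\bar\lambda_{n}\not\in\overline{\mathbb{D}}$, so they cannot coincide with polynomials; density of $\mathcal{M}$ supplies the remaining hypotheses. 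For (4), $\mathcal{W}_{U}$ acts on $L^{2}$ as multiplication by $H^{\infty}$, and I would fix $f\in L^{2}(\mathbb{T})$ with $|f|>0$ a.e.\ and $\log|f|\notin L^{1}(\mathbb{T})$, setting $\mathcal{M}=H^{\infty}$ and $\mathcal{N}=H^{\infty}f$. A relation $u_{1}=u_{2}f$ with $u_{2}\neq 0$ would place $f=u_{1}/u_{2}$ in the Nevanlinna class, forcing $\log|f|\in L^{1}$ by Jensen's inequality --- a contradiction; meanwhile Szeg\H{o}'s theorem gives $\overline{\mathcal{N}}=L^{2}$, so $\overline{\mathcal{M}}\cap\overline{\mathcal{N}}\supseteq H^{2}$. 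Finally, for (5), fix an orthonormal basis $\{e_{n}\}$ of $\mathcal{K}$ and a vector $\phi=\sum_{n}a_{n}e_{n}$ with every $a_{n}\neq 0$; take $\mathcal{M}$ equal to the algebraic linear span of $\{h\otimes e_{n}:h\in\mathcal{H},\ n\ge 1\}$ and $\mathcal{N}=\mathcal{H}\otimes\phi$. Both are $(\mathcal{A}\otimes I_{\mathcal{K}})$-invariant, $\mathcal{M}$ is dense, and comparing $e_{m}$-coordinates at indices $m$ outside the finite support of any given element of $\mathcal{M}$ immediately yields $\mathcal{M}\cap\mathcal{N}=\{0\}$.
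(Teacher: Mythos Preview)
Your proof is correct and uses the same overall strategy as the paper: apply Lemma~\ref{lem:two-invariant-manifolds} in each case. Your constructions for (1), (3), and (5) are essentially the paper's (the paper takes a generic non-rational outer function in place of your $e^{z}$ in (1), the full family $\{k_{\lambda}:\lambda\in\mathbb{D}\setminus\{0\}\}$ in (3), and two abstract dense disjoint linear manifolds $\mathcal{M}_0,\mathcal{N}_0\subset\mathcal{K}$ in (5)). One small point in (3): you should require $\lambda_n\ne 0$, since $k_0=1$ is a polynomial and would land in $\mathcal{M}\cap\mathcal{N}$.

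Cases (2) and (4) are handled by genuinely different constructions. For (2) the paper factors $m=m_1m_2$ with neither $m_j$ a finite Blaschke product and sets $\mathcal{N}=P_{\mathcal{H}(m)}(m_2\mathcal{P})$; the disjointness $\mathcal{M}\cap\mathcal{N}=\{0\}$ then follows in one line from the fact that a polynomial cannot have $m_2$ as an inner divisor. Your route through $e^{z}$ works but costs more: you need the (standard but nontrivial) fact that a function analytic across $\mathbb{T}$ has no singular inner factor, plus the transcendence of $e^{z}$. For (4) the paper uses two overlapping arcs $\omega_{\pm}\subset\mathbb{T}$, takes $\mathcal{M}=\chi_{+}H^{2}$ and $\mathcal{N}=\chi_{-}H^{2}$, and gets $\mathcal{M}\cap\mathcal{N}=\{0\}$ from the F.~and M.~Riesz theorem; your construction via a weight with $\log|f|\notin L^{1}$ and Szeg\H{o}'s theorem is an equally natural alternative. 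Both versions invoke a classical $H^{p}$ theorem of comparable depth; the paper's has the advantage that the closures $\chi_{\pm}L^{2}$ are explicit, while yours makes the link to the Nevanlinna class transparent.
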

\begin{proof}
For the first example, choose an outer function $f\in H^{2}$ which
is not rational, and define $\mathcal{M}$ to consist of all polynomials
and $\mathcal{N}=\{pf:p\text{ a polynomial}\}$. The hypotheses of
Lemma \ref{lem:two-invariant-manifolds} are verified trivially since
both of these spaces are dense in $H^{2}$.

Next, assume that $m$ is an inner function but not a finite Blaschke
product, and consider a factorization $m=m_{1}m_{2}$ such that the
inner functions $m_{j}$ are not finite Blaschke products. We can
define then subspaces $\mathcal{M},\mathcal{N}\subset\mathcal{H}(m)$
by $\mathcal{M}=\{P_{\mathcal{H}(m)}p:p\text{ a polynomial}\}$ and
$\mathcal{N}=\{P_{\mathcal{H}(m)}(pm_{2}):p\text{ a polynomial}\}$.
The space $\mathcal{M}$ is dense in $\mathcal{H}(m)$, so to verify
the hypotheses of Lemma \ref{lem:two-invariant-manifolds} it suffices
to show that $\mathcal{M}\cap\mathcal{N}=\{0\}$. Consider indeed
two polynomials $p,q$ such that $P_{\mathcal{H}(m)}p=P_{\mathcal{H}(m)}(qm_{2})$.
In other words, we have $p=qm_{2}+rm_{1}m_{2}$ for some $r\in H^{2}$.
If $p\ne0$, this equality implies that the inner factor of $p$ (obviously
a finite Blaschke product) is divisible by $m_{2}$, contrary to our
choice of factors.

For example (3), we choose $\mathcal{M}=\{p:p\text{ a polynomial}\}\subset H^{2}$,
and we denote by $\mathcal{N}$ the linear manifold generated by the
functions $k_{\lambda}(z)=(1-\lambda z)^{-1}$, $\lambda\in\mathbb{D}\setminus\{0\}$.
These spaces are dense in $H^{2}$, and the identities\[
(S^{*}p)(z)=\frac{p(z)-p(0)}{z},\quad S^{*}k_{\lambda}=\lambda k_{\lambda}\]
easily imply that they are invariant under $\mathcal{W}_{S^{*}}$.
Finally, a function $p$ in their intersection must be both a polynomial,
and a rational function vanishing at $\infty$, hence $p=0$.

For example (4), define two sets $\omega_{\pm}=\{e^{\pm it}:0<t<3\pi/2\}\subset\mathbb{T}$,
denote by $\chi_{\pm}$ their characteristic functions, and set $\mathcal{M}=\chi_{+}H^{2}$
and $\mathcal{N}=\chi_{-}H^{2}$. Since $\overline{\mathcal{M}}=\chi_{+}L^{2}$
and $\overline{\mathcal{N}}=\chi_{-}L^{2}$, we clearly have $\overline{\mathcal{M}+\mathcal{N}}=L^{2}$
and $\overline{\mathcal{M}}\cap\overline{\mathcal{N}}=\chi_{\omega_{+}\cap\omega_{-}}L^{2}$.
The fact that $\mathcal{M}\cap\mathcal{N}=\{0\}$ follows easily from
the F. and M. Riesz theorem.

Finally, assume that $\mathcal{K}$ is an infinite dimensional Hilbert
space, and let $\mathcal{M}_{0},\mathcal{N}_{0}\subset\mathcal{K}$
be two dense linear manifolds such that $\mathcal{M}_{0}\cap\mathcal{N}_{0}=\{0\}$.
Then $\mathcal{M}=\mathcal{H}\otimes\mathcal{M}_{0}$ and $\mathcal{N}=\mathcal{H}\otimes\mathcal{N}_{0}$
will satisfy the hypotheses of Lemma \ref{lem:two-invariant-manifolds}
for the algebra $\mathcal{A}\otimes I_{\mathcal{K}}$.
\end{proof}
The first two examples above indicate that an algebra with the closability
property must be reasonably large, while the last one shows that it
should not have uniform infinite multiplicity. In this paper we will
focus on algebras which have multiplicity one. The first example of
an algebra with the closability property was of this kind: any maximal
abelian selfadjoint subalgebra of $\mathcal{L}(\mathcal{H})$ has
the closability property as shown in \cite{arv}. This, along with
the examples described in the following proposition (the first of
which also appeared in \cite{arv}), will be treated in a unified
manner in Section \ref{sec:Rationally-Strictly-Cyclic}. The proofs
of these particular cases do in fact suggest the more general methods.
\begin{prop}
\label{pro:SandS(m)haveCP}The algebras $\mathcal{W}_{S}$ and $\mathcal{W}_{S(m)}$
have the closability property.\end{prop}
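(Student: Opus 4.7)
The plan is to show that on $\mathcal{D}(X)$ each such $X$ acts as multiplication by a single meromorphic function $\phi$ on $\mathbb{D}$ (exactly in the $\mathcal{W}_{S}$ case, and modulo $mH^{2}$ in the $\mathcal{W}_{S(m)}$ case); closability then follows from reproducing-kernel continuity on $H^{2}$, since an $H^{2}$ function is determined by its values off any discrete set.

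For $\mathcal{W}_{S}$, fix a nonzero $h_{0}\in\mathcal{D}(X)$, set $g_{0}:=Xh_{0}$ and $\phi:=g_{0}/h_{0}$, meromorphic on $\mathbb{D}$. For any other nonzero $h\in\mathcal{D}(X)$ with $g:=Xh$, the quotient $h/h_{0}$ lies in the Nevanlinna class (as a ratio of two $H^{2}$ functions), so $h/h_{0}=u/v$ for some $u,v\in H^{\infty}\setminus\{0\}$, whence $vh=uh_{0}$ in $H^{2}$. Since $\mathcal{D}(X)$ is $\mathcal{W}_{S}$-invariant and $X$ commutes with every $T_{w}$, applying $X$ to this identity gives $vg=X(vh)=X(uh_{0})=ug_{0}$, hence $g/h=g_{0}/h_{0}=\phi$. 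Thus $Xh=\phi h$ for every $h\in\mathcal{D}(X)$. If now $h_{n}\in\mathcal{D}(X)$ with $h_{n}\to 0$ and $Xh_{n}=\phi h_{n}\to k$ in $H^{2}$, then by reproducing-kernel continuity $h_{n}(\lambda)\to 0$ and $(\phi h_{n})(\lambda)\to k(\lambda)$ for every $\lambda\in\mathbb{D}$; off the discrete pole set of $\phi$ this forces $k(\lambda)=\phi(\lambda)\cdot 0=0$, so $k=0$ in $H^{2}$.

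The $\mathcal{W}_{S(m)}$ case follows the same blueprint inside $\mathcal{H}(m)\subset H^{2}$, with multiplication by $u\in H^{\infty}$ replaced throughout by $u(S(m))=P_{\mathcal{H}(m)}T_{u}|\mathcal{H}(m)$. With $h_{0},h,u,v$ as above, $vh=uh_{0}$ still holds in $H^{2}$; projecting onto $\mathcal{H}(m)$ yields $v(S(m))h=u(S(m))h_{0}$, and applying $X$ produces the weakened multiplier relation $v(Xh-\phi h)\in mH^{2}$, where $\phi:=g_{0}/h_{0}$. For the closability step, if $h_{n}\to 0$ and $Xh_{n}\to k$ in $\mathcal{H}(m)$, one combines this mod-$m$ relation with the orthogonality $k\perp mH^{2}$ (which holds because $k\in\mathcal{H}(m)$) to force $k=0$. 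The main obstacle is precisely this last step: the multiplier $\phi$ is only pinned down modulo $m\mathcal{N}$, so the clean pointwise argument of the $\mathcal{W}_{S}$ case must be supplemented by an orthogonality argument exploiting the splitting $H^{2}=\mathcal{H}(m)\oplus mH^{2}$ to eliminate the $mH^{2}$-ambiguity in the limit.
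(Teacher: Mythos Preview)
Your argument for $\mathcal{W}_{S}$ is correct and genuinely different from the paper's. The paper works on the boundary: it factors each $f\in H^{2}$ as $u_{f}/v_{f}$ with $|v_{f}(\zeta)|=\min\{1,1/|f(\zeta)|\}$, derives the identity $v_{g}u_{f}Xg=u_{g}Xu_{f}$, and then lets $g=g_{n}\to 0$; the specific boundary behaviour $|v_{g_{n}}|\to 1$, $|u_{g_{n}}|\to 0$ a.e.\ plus dominated convergence forces $u_{f}h=0$. Your interior argument (identify $X$ with multiplication by $\phi=Xh_{0}/h_{0}$, then use continuity of point evaluation off the discrete pole set) is cleaner and anticipates the paper's later ``rationally strictly cyclic'' viewpoint.

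The $\mathcal{W}_{S(m)}$ case, however, is not proved. You correctly obtain $v\bigl(Xh-\phi h\bigr)\in mH^{2}$, but the function $v$ here depends on $h$; for a sequence $h_{n}\to 0$ you get $v_{n}g_{n}-u_{n}g_{0}\in mH^{2}$ with no uniform control on $v_{n},u_{n}$ or on the $mH^{2}$ remainder. Your pointwise argument would only recover information at the zeros of $m$ (and even there only when $v_{n}$ does not vanish), so it collapses entirely when $m$ has a nontrivial singular factor. The vague appeal to the splitting $H^{2}=\mathcal{H}(m)\oplus mH^{2}$ does not supply the missing estimate: orthogonality of $k$ to $mH^{2}$ tells you nothing about $\lim_{n}v_{n}g_{n}$ without controlling $v_{n}$.

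This is exactly where the paper's choice of factorization does real work. Because $|v_{g}|\le 1$ and $|u_{g}|\le\min\{1,|g|\}$, one gets the \emph{inequality}
\[
\|u_{f}(S(m))Xg\|\le\|u_{g}\,XP_{\mathcal{H}(m)}u_{f}\|,
\]
and the right-hand side tends to $0$ along $g=g_{n}$ by dominated convergence on $\mathbb{T}$. This yields $u_{f}(S(m))h=0$ for every $f\in\mathcal{D}(X)$, and then a gcd argument ($\mathcal{D}(X)$ dense implies the inner parts of the $f$'s have no common factor with $m$) forces $h=0$. To repair your proof you would need either this boundary control or some other device that makes the denominators $v_{n}$ behave uniformly as $n\to\infty$; as written, the $\mathcal{W}_{S(m)}$ half is a sketch with its key step missing.
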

\begin{proof}
Recall first that every function in $H^{2}$ is the quotient of two
bounded functions in $H^{\infty}$. For instance, given a nonzero
function $f\in H^{2}$, denote by $v_{f}$ the unique outer function
defined by the requirements that $v_{f}(0)>0$ and\[
|v_{f}(\zeta)|=\min\left\{ 1,\frac{1}{|f(\zeta)|}\right\} \]
for almost every $\zeta\in\mathbb{T}$. The functions $v_{f}$ and
$u_{f}=fv_{f}$ belong to $H^{\infty}$, and in fact\begin{equation}
|u_{f}(\zeta)|=\min\{1,|f(\zeta)|\}\quad\text{a.e. }\zeta\in\mathbb{T}.\label{eq:d-f-n-f-less-than1}\end{equation}

Consider first the algebra $\mathcal{W}_{S}$ which consists precisely
of the analytic Toeplitz operators $T_{u}$ with $u\in H^{\infty}$.
Let $X$ be a densely defined linear transformation commuting with
this algebra, and let $f,g\in\mathcal{D}(X)$. Observe first that
$u_{f}=v_{f}f=T_{v_{f}}f\in\mathcal{D}(X)$, and therefore we can
write\begin{eqnarray*}
v_{g}u_{f}Xg & = & T_{v_{g}u_{f}}Xg=XT_{v_{g}u_{f}}g=X(v_{g}u_{f}g)=X(u_{f}u_{g})\\
 & = & XT_{u_{g}}u_{f}=T_{u_{g}}Xu_{f}=u_{g}Xu_{f}.\end{eqnarray*}
Let now $g_{n}\in\mathcal{D}(X)$ be a sequence converging to zero
such that the limit $h=\lim_{n\to\infty}Xh_{n}$ exists. Passing if
necessary to a subsequence, we may assume that $g_{n}(\zeta)\to0$
for almost every $\zeta\in\mathbb{T}$. By virtue of (\ref{eq:d-f-n-f-less-than1})
we also have $|v_{g_{n}}(\zeta)|\to1$ and $u_{g_{n}}(\zeta)\to0$
for a.e. $\zeta$, and therefore\[
\|u_{g_{n}}Xu_{f}\|^{2}=\frac{1}{2\pi}\int_{0}^{2\pi}|u_{g_{n}}(e^{it})|^{2}|(Xu_{f})(e^{it})|^{2}\, dt\to0\]
as $n\to\infty$ by the dominated convergence theorem. The identity\[
v_{g_{n}}u_{f}Xg_{n}=u_{g_{n}}Xu_{f}\]
proved earlier, along with the fact that $|v_{g_{n}}|\to1$ a.e.,
implies that $u_{f}h=0$ for every $f\in\mathcal{D}(X)$. Choosing
a nonzero function $f$ we deduce that $h=0$, thus proving that $X$
is closable.

Consider now a densely defined linear transformation $X$ commuting
with $\mathcal{W}_{S(m)}$. Given $f\in\mathcal{D}(X)$, the vector
$P_{\mathcal{H}(m)}u_{f}=P_{\mathcal{H}(m)}(v_{f}f)=v_{f}(S(m))f$
belongs to $\mathcal{D}(X)$. If $g$ is another vector in $\mathcal{D}(X)$,
we have \begin{eqnarray*}
(v_{g}u_{f})(S(m))g & = & P_{\mathcal{H}(m)}(v_{g}u_{f}g)\\
 & = & P_{\mathcal{H}(m)}(u_{f}u_{g})=u_{g}(S(m))P_{\mathcal{H}(m)}u_{f},\end{eqnarray*}
and therefore\[
X(v_{g}u_{f})(S(m))g=u_{g}(S(m))XP_{\mathcal{H}(m)}u_{f}=P_{\mathcal{H}(m)}(u_{g}XP_{\mathcal{H}(m)}u_{f}).\]
Thus we obtain\[
(v_{g}u_{f})(S(m))Xg=P_{\mathcal{H}(m)}(u_{g}XP_{\mathcal{H}(m)}u_{f}),\]

\[
u_{f}(S(m))Xg=v_{g}(S(m))P_{\mathcal{H}(m)}(u_{g}XP_{\mathcal{H}(m)}u_{f}),\]
and finally\[
\|u_{f}(S(m))Xg\|\le\|P_{\mathcal{H}(m)}(u_{g}XP_{\mathcal{H}(m)}u_{f})\|\le\|u_{g}XP_{\mathcal{H}(m)}u_{f}\|.\]
Consider now a sequence $g_{n}\in\mathcal{D}(X)$ such that $g_{n}\to0$
and $Xg_{n}\to h$. As in the case of $S$, the preceding inequality
implies that $u_{f}(S(m))h=0$ for every $f\in\mathcal{D}(X)$. Equivalently,
$m$ divides the function $u_{f}h$ for every $f\in\mathcal{D}(X)$.
Note now that $f$ and $u_{f}$ have the same inner factor, and therefore
$m$ divides $fh$ for every $f\in\mathcal{D}(X)$. Denote by $d$
the greatest common inner divisor of $\{f:f\in\mathcal{D}(X)\}$.
The density of $\mathcal{D}(X)$ implies that $d\wedge m=1$, and
therefore $m$ must divide $h$ by virtue of \cite[Lemma III.4.5]{Sz-N-F}.
In other words, $h$ is the zero vector in $\mathcal{H}(m)$, and
the desired conclusion that $X$ is closable follows.
\end{proof}
Note incidentally that the example of $\mathcal{W}_{S}$ shows that
closability is not generally inherited by the adjoint algebra.

We conclude this section with a simple fact which will be used in
the study of closability for quasisimilar algebras. Let $\mathcal{A}_{i}\subset\mathcal{L}(\mathcal{H}_{i})$,
$i\in I$, be algebras. The algebra $\bigoplus_{i\in I}\mathcal{A}_{i}\subset\mathcal{L}\left(\bigoplus_{i\in I}\mathcal{H}_{i}\right)$
consists of those operators of the form $\bigoplus_{i\in I}T_{i}$,
where $T_{i}\in\mathcal{A}_{i}$ for each $i$, and $\sup\{\|T_{i}\|:i\in I\}<\infty$.
\begin{lem}
\label{lem:CP-for-direct-sums}A direct sum $\mathcal{A}=\bigoplus_{i\in I}\mathcal{A}_{i}$
has the closability property if and only if $\mathcal{A}_{i}$ has
this property for every $i\in I$.\end{lem}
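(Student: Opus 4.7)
The plan is to handle the two implications separately, using as the central technical fact that the coordinate projection $P_i$ onto $\mathcal{H}_i$ belongs to $\mathcal{A}$ for every $i$. This holds because each $\mathcal{A}_i$ is unital, so the operator $I_{\mathcal{H}_i}$ together with zeros in every other coordinate is a bona fide element of $\bigoplus_{j\in I}\mathcal{A}_j$ (the uniform norm bound being $1$). In both directions this lets us align $X$ with the block decomposition.

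For the ``only if'' direction, I would argue by contrapositive. Suppose some $\mathcal{A}_{i_0}$ fails the closability property; pick a densely defined non-closable $X_{i_0}\colon \mathcal{D}(X_{i_0})\to\mathcal{H}_{i_0}$ commuting with $\mathcal{A}_{i_0}$. Lift it to $\bigoplus_{j\in I}\mathcal{H}_j$ by setting
\[
\mathcal{D}(X)=\{h\in\textstyle\bigoplus_{j}\mathcal{H}_j: P_{i_0}h\in\mathcal{D}(X_{i_0})\},\qquad Xh=X_{i_0}P_{i_0}h,
\]
where the range is viewed inside $\mathcal{H}_{i_0}\subset\bigoplus_j\mathcal{H}_j$. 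Density is immediate since $\mathcal{D}(X)$ contains $\mathcal{D}(X_{i_0})\oplus\bigoplus_{j\ne i_0}\mathcal{H}_j$. Commutation with an element $T=\bigoplus_j T_j\in\mathcal{A}$ reduces to the identity $P_{i_0}Th=T_{i_0}P_{i_0}h$ together with the commutation of $X_{i_0}$ with $T_{i_0}\in\mathcal{A}_{i_0}$. Finally, a sequence witnessing non-closability of $X_{i_0}$ lifts (by placing zeros in every other coordinate) to a sequence witnessing non-closability of $X$.

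For the ``if'' direction, let $X$ be densely defined, commuting with $\mathcal{A}$. Because $P_i\in\mathcal{A}$, we get $P_i\mathcal{D}(X)\subset\mathcal{D}(X)$ and $XP_i=P_iX$ on $\mathcal{D}(X)$. Set $\mathcal{D}_i=P_i\mathcal{D}(X)\subset\mathcal{H}_i$; it is dense in $\mathcal{H}_i$ by continuity of $P_i$ and density of $\mathcal{D}(X)$. For $h_i\in\mathcal{D}_i$, regarded as a vector of $\mathcal{D}(X)$ supported in the $i$-th coordinate, the identities $P_jXh_i=XP_jh_i=0$ for $j\ne i$ force $Xh_i\in\mathcal{H}_i$, so $X$ restricts to a linear map $X_i\colon\mathcal{D}_i\to\mathcal{H}_i$. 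Taking $T_i\in\mathcal{A}_i$ and forming $T\in\mathcal{A}$ with $T_i$ in the $i$-th slot and zero elsewhere shows that $X_i$ commutes with $\mathcal{A}_i$. By hypothesis each $X_i$ is closable. Now if $h_n\in\mathcal{D}(X)$ with $h_n\to 0$ and $Xh_n\to k$, then for every $i$ we have $P_ih_n\in\mathcal{D}_i$, $P_ih_n\to 0$, and $X_i(P_ih_n)=P_iXh_n\to P_ik$, so closability of $X_i$ yields $P_ik=0$; since $i$ is arbitrary, $k=0$ and $X$ is closable.

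I do not expect a real obstacle: the only nontrivial ingredient is verifying that $P_i\in\mathcal{A}$ (which uses unitality of each $\mathcal{A}_i$ and the uniform-bound condition built into the definition of $\bigoplus_i\mathcal{A}_i$), after which the decomposition $X=\bigoplus_i X_i$ and the two directions are a matter of routine bookkeeping with graphs and domains.
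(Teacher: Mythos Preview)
Your proof is correct and follows essentially the same route as the paper's: both directions hinge on the observation that each coordinate projection $P_i$ lies in $\mathcal{A}$, the ``only if'' direction lifts a non-closable $X_{i_0}$ to the full sum by padding with zeros, and the ``if'' direction restricts $X$ to each component via $P_iX\subset XP_i$. The only cosmetic difference is that you phrase the first direction as a contrapositive and take the slightly larger domain $\{h:P_{i_0}h\in\mathcal{D}(X_{i_0})\}$, while the paper argues directly and leaves the final verification that closability of each $X_i$ implies closability of $X$ to the reader---a step you carry out explicitly.
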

\begin{proof}
Assume first that $\mathcal{A}$ has the closability property, and
$X_{i_{0}}$ is a densely defined linear transformation on $\mathcal{H}_{i_{0}}$
commuting with $\mathcal{A}_{i_{0}}$ for some $i_{0}\in I$. We define
a linear transformation $X$ with dense domain $\mathcal{D}(X)=\bigoplus_{i\in I}\mathcal{D}_{i}$,
where $\mathcal{D}_{i_{0}}=\mathcal{D}_{i_{0}}$, $\mathcal{D}_{i}=\mathcal{H}_{i}$
for $i\ne i_{0}$, and $X\bigoplus h_{i}=\bigoplus k_{i}$, where
$k_{i_{0}}=X_{i_{0}}h_{i_{0}}$ and $k_{i}=0$ for $i\ne i_{0}$.
The linear transformation $X$ commutes with $\mathcal{A}$, hence
it is closable. It follows that $X_{i_{0}}$ must be closable as well.
Conversely, assume that each $\mathcal{A}_{i}$ is closable, and let
$X$ be a densely defined linear transformation commuting with $\mathcal{A}$.
If $P_{j}\in\mathcal{A}$ denotes the orthogonal projection onto the
$j$th component of $\bigoplus_{i\in I}\mathcal{H}_{i}$, we have
then $P_{j}X\subset XP_{j}$, and the linear transformation $X_{j}:\mathcal{D}_{j}=P_{j}\mathcal{D}(X)\to\mathcal{H}_{j}$
defined by $X_{j}=X|\mathcal{D}_{j}$ commutes with $\mathcal{A}_{j}$.
It follows that each $X_{j}$ is closable, and then it is easy to
verify that $X$ is closable as well.
\end{proof}

\section{Rationally Strictly Cyclic Vectors and Confluence\label{sec:Rationally-Strictly-Cyclic}}

The examples  in Proposition \ref{pro:SandS(m)haveCP}, as well as
maximal abelian selfadjoint subalgebras (also known as \emph{masa}s),
can actually be treated in a unified manner. For this purpose we need
a new concept.
\begin{defn}
\label{def:rat-cyc-vect}Let $\mathcal{A}\subset\mathcal{L}(\mathcal{H})$
be a unital algebra. A vector $h_{0}\in\mathcal{H}$ is called a \emph{rationally
strictly cyclic} vector for $\mathcal{A}$ if for every $h\in\mathcal{H}$
there exist $A,B\in\mathcal{A}$ such that $Bh=Ah_{0}$ and $\ker B=\{0\}$.
\end{defn}
Recall that $h_{0}$ is said to be \emph{strictly cyclic} for $\mathcal{A}$
if $\mathcal{A}h_{0}=\mathcal{H}$. Thus, a strictly cyclic vector
is rationally strictly cyclic, but not conversely. None of the examples
considered in this paper exhibit strictly cyclic vectors.
\begin{lem}
\label{lem:some-algebras-with-RSC}The following algebras have rationally
strictly cyclic vectors:
\begin{enumerate}
\item $\mathcal{W}_{S}$
\item $\mathcal{W}_{S(m)}$
\item Any masa on a separable Hilbert space. More generally, any masa with
a cyclic vector.
\end{enumerate}
\end{lem}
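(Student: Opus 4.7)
The plan is to exhibit explicit rationally strictly cyclic vectors in each case. For (1) and (2) we reuse the factorization $h = u_h/v_h$ with $u_h, v_h \in H^\infty$ and $v_h$ outer that was produced in the proof of Proposition~\ref{pro:SandS(m)haveCP}; for (3) we reduce to the multiplication model $L^\infty(\mu)$ acting on $L^2(\mu)$.

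For $\mathcal{W}_S$ we take $h_0 = 1 \in H^2$: given $h \in H^2$, set $A = T_{u_h}$ and $B = T_{v_h}$; then $Bh = v_h h = u_h = A h_0$, and $B$ is injective because $v_h$ is outer, hence a.e.\ nonzero on $\mathbb{T}$. For $\mathcal{W}_{S(m)}$ we assume $m$ is nonconstant (the constant case being trivial) and take $h_0 = P_{\mathcal{H}(m)} 1 \ne 0$. Given $h \in \mathcal{H}(m)$, we view it as an element of $H^2$ and set $A = u_h(S(m))$, $B = v_h(S(m))$. Using $v_h h = u_h$ together with the identity $u(S(m)) P_{\mathcal{H}(m)} = P_{\mathcal{H}(m)} T_u$ on $H^2$ (which follows because $u \cdot m H^2 \subset m H^2$), one checks that $Bh = P_{\mathcal{H}(m)} u_h = A h_0$. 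The substantive point is the injectivity of $B$: if $g \in \mathcal{H}(m)$ and $v_h g \in m H^2$, say $v_h g = m k$ with $k \in H^2$, then comparing inner factors on the two sides and using that $v_h$ has trivial inner factor forces $m$ to divide the inner factor of $g$; hence $g \in m H^2 \cap \mathcal{H}(m) = \{0\}$. Equivalently, $v_h \wedge m = 1$ and $v_h(S(m))$ is a quasiaffinity by standard $C_0$ theory \cite{Berco}. This injectivity claim is the only non-routine point in the proof.

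For case (3), any masa on a separable Hilbert space admits a cyclic vector, and a masa with a cyclic vector is unitarily equivalent to $L^\infty(\mu)$ acting on $L^2(\mu)$ with $h_0 = 1$ cyclic. Given $h \in L^2(\mu)$, put $v = 1/(1+|h|)$ and $u = h/(1+|h|)$, both of which lie in $L^\infty(\mu)$, and let $B$, $A$ denote multiplication by $v$, $u$ respectively; then $Bh = vh = u = A h_0$, and $B$ is injective since $v > 0$ a.e.
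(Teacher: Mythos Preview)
Your proof is correct and follows essentially the same approach as the paper: in each case you exhibit the same rationally strictly cyclic vector (namely $1$, $P_{\mathcal{H}(m)}1$, and $1\in L^2(\mu)$) and use the outer--times--$H^\infty$ factorization from Proposition~\ref{pro:SandS(m)haveCP}. You provide more detail than the paper does, particularly the verification that $v_h(S(m))$ is injective (which the paper leaves implicit), but the underlying argument is the same.
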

\begin{proof}
The vector $1\in H^{2}$ is rationally strictly cyclic for $\mathcal{W}_{S}$,
while $1-\overline{m(0)}m=P_{\mathcal{H}(m)}1$ is rationally strictly
cyclic for $\mathcal{W}_{S(m)}$. For (3), we may assume that $\mathcal{H}=L^{2}(\mu)$,
where $\mu$ is a Borel probability measure on some compact topological
space, and $\mathcal{A}=\{M_{u}:u\in L^{\infty}(\mu)\}$, where\[
M_{u}f=uf,\quad u\in L^{\infty}(\mu),f\in L^{2}(\mu).\]
Since every function in $L^{2}(\mu)$ is the quotient of two bounded
functions, the constant function $1$ is rationally strictly cyclic
for $\mathcal{A}$.
\end{proof}
Here are two useful properties of algebras with rationally strictly
cyclic vectors.
\begin{lem}
\label{lem:separating-for-commutant}Let $\mathcal{A}\subset\mathcal{L}(\mathcal{H})$
be a unital algebra with a rationally strictly cyclic vector $h_{0}$.
\begin{enumerate}
\item If $T\in\mathcal{A}'\setminus\{0\}$ then $Th_{0}\ne0$.
\item If $\mathcal{A}$ is commutative and $\mathcal{D}\subset\mathcal{H}$
is a dense linear manifold, invariant for $\mathcal{A}$, then\[
\bigcap\{\ker T:T\in\mathcal{A},Th_{0}\in\mathcal{D}\}=\{0\}.\]

\end{enumerate}
\end{lem}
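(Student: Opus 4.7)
For part (1), the approach is direct. Suppose $T\in\mathcal{A}'$ satisfies $Th_{0}=0$. Given an arbitrary $h\in\mathcal{H}$, apply the rationally strictly cyclic condition to produce $A,B\in\mathcal{A}$ with $Bh=Ah_{0}$ and $\ker B=\{0\}$. Since $T$ commutes with both $A$ and $B$, we compute
\[
BTh=TBh=TAh_{0}=ATh_{0}=0,
\]
and injectivity of $B$ yields $Th=0$. Hence $T=0$. No subtlety here; the content is just that the relation $Bh=Ah_{0}$ transports the hypothesis $Th_{0}=0$ to an arbitrary $h$.

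For part (2), the plan is similar in spirit but will need to use the commutativity of $\mathcal{A}$ and the $\mathcal{A}$-invariance of $\mathcal{D}$ simultaneously to manufacture enough operators $T\in\mathcal{A}$ with $Th_{0}\in\mathcal{D}$. Fix $k$ in the intersection. First, apply the rationally strictly cyclic property to $k$ itself to obtain $A',B'\in\mathcal{A}$ with $B'k=A'h_{0}$ and $\ker B'=\{0\}$; the goal is to show $A'=0$, which will force $B'k=0$ and hence $k=0$.

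Next, for each $h\in\mathcal{D}$, apply the cyclicity property to $h$ to obtain $A_{h},B_{h}\in\mathcal{A}$ with $B_{h}h=A_{h}h_{0}$ and $\ker B_{h}=\{0\}$. The key observation is that $A_{h}h_{0}=B_{h}h$ lies in $\mathcal{D}$ by the invariance hypothesis, so $A_{h}$ is one of the operators in the collection defining the intersection, and therefore $A_{h}k=0$. A commutativity calculation then gives
\[
B_{h}A'h=A'B_{h}h=A'A_{h}h_{0}=A_{h}A'h_{0}=A_{h}B'k=B'A_{h}k=0,
\]
so injectivity of $B_{h}$ forces $A'h=0$. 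Since $h\in\mathcal{D}$ was arbitrary and $\mathcal{D}$ is dense, $A'=0$, completing the proof.

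The main obstacle I anticipate is not computational but conceptual: one must realize that even though the collection $\{T\in\mathcal{A}:Th_{0}\in\mathcal{D}\}$ is defined by a restrictive condition, the invariance of $\mathcal{D}$ ensures that the operators $A_{h}$ produced by rational strict cyclicity from vectors $h\in\mathcal{D}$ automatically satisfy this condition. This is what bridges the hypothesis (a single $h_{0}$) to separation on all of $\mathcal{H}$, and it is why commutativity cannot be dropped in (2) whereas (1) requires no commutativity of $\mathcal{A}$ itself.
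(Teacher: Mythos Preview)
Your proof is correct and follows essentially the same approach as the paper's: part~(1) is identical, and in part~(2) both you and the paper observe that $A_{h}h_{0}=B_{h}h\in\mathcal{D}$ for $h\in\mathcal{D}$, then run the same commutativity chain (yours is the paper's written in reverse) to conclude $A'h=0$ on the dense set $\mathcal{D}$.
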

\begin{proof}
Assume that $T\in\mathcal{A}'$ and $Th_{0}=0$. Given $x\in\mathcal{H}$,
choose $A_{x},B_{x}\in\mathcal{A}$ such that $B_{x}x=A_{x}h_{0}$
and $\ker B_{x}=\{0\}.$ We have then\[
B_{x}Tx=TB_{x}x=TA_{x}h_{0}=A_{x}Th_{0}=0,\]
and therefore $Tx=0$. This implies that $T=0$ since $x$ is arbitrary.

Assume now that $\mathcal{A}$ is commutative and $\mathcal{D}\subset\mathcal{H}$
is a dense linear manifold, invariant for $\mathcal{A}$. Let $h\in\mathcal{H}$
be a vector such that $Ah=0$ whenever $A\in\mathcal{A}$ and $Ah_{0}\in\mathcal{D}$.
Using the notation above, we have $A_{x}h_{0}=B_{x}x\in\mathcal{D}$
whenever $x\in\mathcal{D}$, and therefore $A_{x}h=0$ for $x\in\mathcal{D}$.
Thus\begin{eqnarray*}
0 & = & B_{h}A_{x}h=A_{x}B_{h}h=A_{x}A_{h}h_{0}=A_{h}A_{x}h_{0}\\
 & = & A_{h}B_{x}x=B_{x}A_{h}x\end{eqnarray*}
for $x\in\mathcal{D}$, which implies $A_{h}x=0$ for such vectors
$x$. From the density of $\mathcal{D}$ we deduce that $A_{h}=0$,
and thus $B_{h}h=A_{h}h_{0}=0$ and $h=0$, as desired.
\end{proof}
We can now prove a generalization of Proposition \ref{pro:SandS(m)haveCP}.
\begin{thm}
\label{thm:abelian+RSCimplies-CP}Any commutative algebra $\mathcal{A}$
with a rationally strictly cyclic vector has the closability property.\end{thm}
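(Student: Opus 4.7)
My plan is to reduce via Lemma \ref{lem:separating-for-commutant}(2) and then imitate the proof of the $\mathcal{W}_S$ case in Proposition \ref{pro:SandS(m)haveCP}, with the outer-inner factorization $f = u_f/v_f$ replaced by a rational strict cyclicity decomposition $B_f f = A_f h_0$. Let $X$ be a densely defined linear transformation commuting with $\mathcal{A}$, and suppose $g_n \in \mathcal{D}(X)$ satisfies $g_n \to 0$ and $Xg_n \to h$. Since $X$ commutes with $\mathcal{A}$, the domain $\mathcal{D}(X)$ is a dense, $\mathcal{A}$-invariant linear manifold, so Lemma \ref{lem:separating-for-commutant}(2) reduces the claim $h = 0$ to showing that $Ah = 0$ whenever $A \in \mathcal{A}$ and $Ah_0 \in \mathcal{D}(X)$.

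Fix such an $A$ and set $v = Ah_0 \in \mathcal{D}(X)$. For each $n$, apply the rational strict cyclicity of $h_0$ to $g_n$ to obtain $A_n, B_n \in \mathcal{A}$ with $B_n g_n = A_n h_0$ and $\ker B_n = \{0\}$. Commutativity of $\mathcal{A}$ gives $A_n v = A_n A h_0 = A A_n h_0 = A B_n g_n$, and since both sides lie in $\mathcal{D}(X)$, applying $X$ (which commutes with every operator in $\mathcal{A}$) yields the key identity
\[
A_n X v \;=\; A B_n X g_n \;=\; B_n A X g_n.
\]
This is the abstract analog of the identity $v_g u_f Xg = u_g Xu_f$ used in the $\mathcal{W}_S$ case.

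It remains to pass to the limit as $n \to \infty$ and conclude $Ah = 0$. After rescaling each pair $(A_n, B_n)$ by a common positive scalar so that $\|B_n\| \le 1$---a rescaling that preserves both $B_n g_n = A_n h_0$ and the injectivity of $B_n$---we have $A_n h_0 = B_n g_n \to 0$ in $\mathcal{H}$, while $B_n A X g_n$ remains bounded because $A X g_n \to Ah$. The main obstacle is that, in contrast to the $\mathcal{W}_S$ case where the uniform bounds $|u_g|, |v_g| \le 1$ permitted a pointwise dominated convergence argument on the circle, the abstract setting offers no such pointwise control. One must instead exploit the WOT-compactness of the closed operator unit ball to pass to a subsequential WOT-limit of the $B_n$, and then combine the resulting limit form of the identity with the injectivity of $B_n$, the separating property of $h_0$ (Lemma \ref{lem:separating-for-commutant}(1)), and the commutativity of $\mathcal{A}$ to extract $Ah = 0$.
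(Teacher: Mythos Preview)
Your reduction via Lemma~\ref{lem:separating-for-commutant}(2) and your key identity $A_n Xv = B_n A Xg_n$ (with $v = Ah_0$, $B_n g_n = A_n h_0$, $\ker B_n = \{0\}$) are correct and agree exactly with the paper's intermediate step. The gap is in your proposed passage to the limit.

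The WOT-compactness route does not close. After normalizing $\|B_n\|\le 1$ (or even $\max(\|A_n\|,\|B_n\|)=1$) and passing to WOT-limits $A_n\to A'$, $B_n\to B$, you get $A'h_0=0$ (from $A_n h_0=B_n g_n\to 0$), hence $A'=0$ by Lemma~\ref{lem:separating-for-commutant}(1), and the identity collapses to $BAh=0$. But $B$ need not be injective---indeed $B$ can be $0$---so nothing follows about $Ah$. The injectivity of the individual $B_n$ does not survive the WOT-limit, and there is no mechanism in your sketch that recovers it.

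The missing idea, which is precisely what the paper supplies, is to apply rational strict cyclicity \emph{a second time}, now to the fixed vector $Xv=X(Ah_0)$: choose $A',B'\in\mathcal{A}$ with $B'(Xv)=A'h_0$ and $\ker B'=\{0\}$. Multiplying your identity by $B'$ and using commutativity together with $A_n h_0=B_n g_n$ gives
\[
B_n B' A Xg_n \;=\; B' A_n Xv \;=\; A_n A' h_0 \;=\; A' B_n g_n \;=\; B_n A' g_n,
\]
and now the injectivity of $B_n$ can be used \emph{before} taking any limit, yielding $B' A Xg_n = A' g_n$ with $A',B'$ independent of $n$. Letting $n\to\infty$ gives $B'Ah=0$, hence $Ah=0$ since $B'$ is injective. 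So you were one decomposition away from the paper's proof; the point is to pin down fixed operators on both sides so that the limit is trivial, rather than to chase WOT-limits of the $n$-dependent ones.
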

\begin{proof}
Let $h\in\mathcal{H}$ be a rationally strictly cyclic vector for
the algebra $\mathcal{A}\subset\mathcal{L}(\mathcal{H})$, and let
$X$ be a linear transformation with dense domain $\mathcal{D}(X)$,
commuting with $\mathcal{A}$. For every $x\in\mathcal{H}$ we choose
operators $A_{x},B_{x}\in\mathcal{A}$ satisfying $B_{x}x=A_{x}h_{0}$
and $\ker B_{x}=\{0\}$. Consider a sequence $x_{n}\in\mathcal{D}(X)$
such that $x_{n}\to0$ and $Xx_{n}\to h$ as $n\to\infty$. By Lemma
\ref{lem:separating-for-commutant}(2), it will suffice to show that
$Th=0$ whenever $T\in\mathcal{A}$ and $Th_{0}\in\mathcal{D}(X)$.
Observe first that for such operators $T$ we have\[
B_{Xx_{n}}TXx_{n}=XTB_{Xx_{n}}Xx_{n}=XTA_{Xx_{n}}h_{0}=A_{Xx_{n}}XTh_{0}.\]
Multiplying both sides by $B_{XTh_{0}}$ and using commutativity,
we obtain\begin{eqnarray*}
B_{Xx_{n}}B_{XTh_{0}}TXx_{n} & = & A_{Xx_{n}}XB_{XTh_{0}}Th_{0}=A_{Xx_{n}}XA_{XTh_{0}}h_{0}\\
 & = & A_{XTh_{0}}XA_{Xx_{n}}h_{0}=A_{XTh_{0}}XB_{Xx_{n}}x_{n}\\
 & = & B_{Xx_{n}}A_{XTh_{0}}Xx_{n},\end{eqnarray*}
and therefore

\[
B_{XTh_{0}}TXx_{n}=A_{XTh_{0}}Xx_{n}\]
because $B_{Xx_{n}}$ is injective. Letting $n\to\infty$ we obtain
$B_{XTh_{0}}Th=0$ and hence $Th=0$, as desired.\end{proof}
\begin{cor}
There exists no $2$-transitive, commutative subalgebra of $\mathcal{L}(\mathcal{H})$
with a rationally strictly cyclic vector.
\end{cor}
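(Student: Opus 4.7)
The plan is to chain Theorem~\ref{thm:abelian+RSCimplies-CP} with the closability-to-$n$-transitivity implication recalled in the introduction, and then invoke the elementary fact that the weak operator closure of a commutative set of operators is commutative.

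First I would observe that any commutative $\mathcal{A}\subset\mathcal{L}(\mathcal{H})$ with a rationally strictly cyclic vector has the closability property, by Theorem~\ref{thm:abelian+RSCimplies-CP}. Next, if such an $\mathcal{A}$ were also $2$-transitive, then by the Arveson-style argument stated in the introduction, the closability property promotes $2$-transitivity to $n$-transitivity for every $n\ge1$, which is equivalent to $\mathcal{A}$ being dense in $\mathcal{L}(\mathcal{H})$ in the weak operator topology.

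Finally, I would use the separate WOT-continuity of multiplication, applied through two iterated weak limits, to conclude that the WOT-closure of a commutative subset of $\mathcal{L}(\mathcal{H})$ is again commutative. Hence $\mathcal{L}(\mathcal{H})$ itself would be commutative, forcing $\dim\mathcal{H}\le1$, contrary to the standing assumption that $\mathcal{H}$ is nontrivial. There is no real obstacle here; the proof is essentially an assembly of results already present in the paper, and the only point worth stating carefully is the iterated-limit step, which is needed because operator multiplication is only separately, not jointly, WOT-continuous.
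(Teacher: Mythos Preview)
Your proof is correct and is precisely the argument the paper has in mind. The corollary is stated without proof immediately after Theorem~\ref{thm:abelian+RSCimplies-CP}, and the intended reasoning is to combine that theorem with the Arveson implication from the introduction (closability plus $2$-transitivity yields WOT-density) and the elementary fact that the WOT-closure of a commutative algebra is commutative, exactly as you do.
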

The calculations in the preceding proof can be used to relate closed,
densely defined linear transformations commuting with $\mathcal{A}$
with linear transformations of the form $B^{-1}A$ with $A,B\in\mathcal{A}$
and $\ker B=\{0\}$. Note that\[
\mathcal{G}(B^{-1}A)=\{h\oplus k\in\mathcal{H}\oplus\mathcal{H}:Ah=Bk\},\]
and this is generally larger than\[
\mathcal{G}(AB^{-1})=\{Bh\oplus Ah:h\in\mathcal{H}\}.\]
Also observe that two linear transformations of this form, say $B^{-1}A,B_{1}^{-1}A_{1}$,
which agree on a dense linear manifold $\mathcal{D}$, must in fact
be equal. Indeed, the equality on $\mathcal{D}$ implies that $BA_{1}h=B_{1}Ah$
for $h\in\mathcal{D}$, and therefore $B_{1}A=BA_{1}$. Thus for $h\oplus k\in\mathcal{G}(B^{-1}A)$
we have\[
B(B_{1}h-A_{1}k)=B_{1}(Bh-Ak)=0,\]
 and hence $h\oplus k\in\mathcal{G}(B_{1}^{-1}A_{1})$ because $B$
is injective.
\begin{prop}
\label{pro:commutant-is-quotient-ring}Let $\mathcal{A}$ be a commutative
algebra with a rationally strictly cyclic vector $h_{0}$. For every
densely defined linear transformation $X$ commuting with $\mathcal{A}$,
such that $h_{0}\in\mathcal{D}(X)$, there exist $A,B\in\mathcal{A}$
such that $\ker B=\{0\}$ and $X\subset B^{-1}A$. If $X$ is bounded,
we have $X=B^{-1}A$. In particular, the commutant $\mathcal{A}'$
is a commutative algebra.\end{prop}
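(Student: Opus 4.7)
The plan is to realize $X$ as a ``fraction'' $B^{-1}A$ by first locating a denominator $B$ that works on the distinguished vector $h_0$, and then propagating this to all of $\mathcal{D}(X)$ via the rational strict cyclicity trick that was introduced in the proof of Theorem~\ref{thm:abelian+RSCimplies-CP}. Concretely, I would apply Definition~\ref{def:rat-cyc-vect} to the single vector $Xh_0 \in \mathcal{H}$ to produce $A,B \in \mathcal{A}$ with $\ker B = \{0\}$ and $B(Xh_0) = Ah_0$. This is the candidate fraction, and the whole proposition is the assertion that the choice made for $h_0$ already determines $X$ everywhere.

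Next I would verify $X \subset B^{-1}A$. Fix $h \in \mathcal{D}(X)$ and choose $A_h, B_h \in \mathcal{A}$ with $\ker B_h = \{0\}$ and $B_h h = A_h h_0$. Because $X$ commutes with $\mathcal{A}$ and both $h$ and $h_0$ lie in $\mathcal{D}(X)$, I can push $B_h$ through $X$ and compute
\[
B_h(BXh) = B(BX h_h) \ldots
\]
the clean version being the chain
\[
B_h B X h = B X B_h h = B X A_h h_0 = B A_h X h_0 = A_h B X h_0 = A_h A h_0 = A B_h h = B_h A h,
\]
which uses only the commutativity of $\mathcal{A}$, the commutation of $X$ with elements of $\mathcal{A}$, and the defining identity $B(Xh_0) = Ah_0$. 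Since $B_h$ is injective, this forces $BXh = Ah$, hence $h \oplus Xh \in \mathcal{G}(B^{-1}A)$. The bounded case is then immediate: if $X$ is everywhere defined (or extends by density to all of $\mathcal{H}$), then for any $h \oplus k \in \mathcal{G}(B^{-1}A)$ we have $Bk = Ah = BXh$, so $k = Xh$ by injectivity of $B$, giving the graph equality $X = B^{-1}A$.

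For the final assertion, let $X, Y \in \mathcal{A}'$; both are bounded, everywhere defined, and contain $h_0$ in their domain, so the previous paragraph produces $A_1,B_1,A_2,B_2 \in \mathcal{A}$ with $\ker B_i = \{0\}$, $X = B_1^{-1}A_1$, $Y = B_2^{-1}A_2$. Since $X$ and $Y$ commute with every element of $\mathcal{A}$ (in particular with $A_1,B_1,A_2,B_2$), and $\mathcal{A}$ itself is commutative, I would multiply through by $B_1 B_2$ to get $B_1 B_2 XY = A_1 A_2 = A_2 A_1 = B_1 B_2 YX$, and conclude $XY = YX$ from the injectivity of $B_1 B_2$.

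The only real obstacle is the algebraic juggling in the central chain of equalities: one must be careful to track which vectors lie in $\mathcal{D}(X)$ at each step so that the commutation relation $XTh = TXh$ is legitimately applicable, and to use commutativity of $\mathcal{A}$ at precisely the two places where $B$ must be moved past $A_h$ and $A$ past $B_h$. Once this bookkeeping is organized, everything reduces to the defining identity at $h_0$ plus a single cancellation by an injective element of $\mathcal{A}$.
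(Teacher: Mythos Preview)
Your proof is correct and essentially identical to the paper's: both pick $A=A_{Xh_0}$, $B=B_{Xh_0}$ from the rational strict cyclicity applied to $Xh_0$, run the same chain of equalities (your $B_hBXh=\cdots=B_hAh$ is exactly the paper's computation with relabeled letters), and cancel the injective $B_h$. Your treatment of the bounded case and of the commutativity of $\mathcal{A}'$ spells out what the paper dismisses as following ``easily,'' but the arguments are the intended ones.
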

\begin{proof}
As in the preceding proof, we choose for each $h\in\mathcal{H}$ operators
$A_{h},B_{h}\in\mathcal{A}$ such that $\ker B_{h}=\{0\}$ and $B_{h}h=A_{h}h_{0}$.
Assume now that $h_{0}\in\mathcal{D}(X)$ and $X$ commutes with $\mathcal{A}$.
We have then for $h\in\mathcal{D}(X)$\begin{eqnarray*}
B_{h}B_{Xh_{0}}Xh & = & B_{Xh_{0}}XB_{h}h=B_{Xh_{0}}XA_{h}h_{0}\\
 & = & A_{h}B_{Xh_{0}}Xh_{0}=A_{h}A_{Xh_{0}}h_{0}\\
 & = & A_{Xh_{0}}B_{h}h=B_{h}A_{Xh_{0}}h,\end{eqnarray*}
from which we conclude that $X\subset B_{Xh_{0}}^{-1}A_{Xh_{0}}$
because $B_{h}$ is injective. The remaining assertions follow easily
from this.
\end{proof}
Sometimes an algebra with a rationally strictly cyclic vector has
the stronger property defined below.
\begin{defn}
\label{def:confluence}Let $\mathcal{A}\subset\mathcal{L}(\mathcal{H})$
be a unital algebra. We will say that $\mathcal{A}$ is \emph{confluent}
if for every two vectors $h_{1},h_{2}\in\mathcal{H}\setminus\{0\}$
there exist injective operators $B_{1},B_{2}\in\mathcal{A}$ such
that $B_{1}h_{1}=B_{2}h_{2}$.\end{defn}
\begin{prop}
\label{pro:confluent=00003Drsc+noKer}For a commutative unital algebra
$\mathcal{A}\subset\mathcal{L}(\mathcal{H})$, the following two assertions
are equivalent:
\begin{enumerate}
\item $\mathcal{A}$ has a rationally strictly cyclic vector and $\ker B=\{0\}$
for every $B\in\mathcal{A}\setminus\{0\}$;
\item $\mathcal{A}$ is confluent.
\end{enumerate}
If these equivalent conditions are satisfied, then every nonzero vector
is rationally strictly cyclic for $\mathcal{A}$; moreover, every
densely defined linear transformation commuting with $\mathcal{A}$
is contained in $B^{-1}A$ for some $A,B\in\mathcal{A}$ such that
$\ker B=\{0\}$.

\end{prop}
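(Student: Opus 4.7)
The plan is to establish the equivalence by elementary commutative-algebra manipulations, then deduce the two extra conclusions from what has already been proved in Proposition \ref{pro:commutant-is-quotient-ring}.

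For the direction (1) $\Rightarrow$ (2), given nonzero $h_{1},h_{2}$, I would use the rationally strictly cyclic vector $h_{0}$ to pick $A_{i},B_{i}\in\mathcal{A}$ with $B_{i}h_{i}=A_{i}h_{0}$ and $\ker B_{i}=\{0\}$. Since $B_{i}$ is injective and $h_{i}\neq 0$, we have $A_{i}h_{0}\neq 0$, so $A_{i}\neq 0$, and by assumption (1) each $A_{i}$ is injective. Commutativity then gives
\[
A_{2}B_{1}h_{1}=A_{2}A_{1}h_{0}=A_{1}A_{2}h_{0}=A_{1}B_{2}h_{2},
\]
and $A_{2}B_{1},A_{1}B_{2}$ are injective as products of injective commuting operators in $\mathcal{A}$; this is exactly confluence.

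For (2) $\Rightarrow$ (1), I would first argue that every nonzero $B\in\mathcal{A}$ is injective. Suppose $Bh_{1}=0$ with $h_{1}\neq 0$, and pick an arbitrary $h_{2}\in\mathcal{H}$; if $Bh_{2}=0$ we are done, otherwise apply confluence to $h_{1}$ and $h_{2}$ (both nonzero) to get injective $C_{1},C_{2}\in\mathcal{A}$ with $C_{1}h_{1}=C_{2}h_{2}$. Then
\[
C_{2}Bh_{2}=BC_{2}h_{2}=BC_{1}h_{1}=C_{1}Bh_{1}=0,
\]
and injectivity of $C_{2}$ forces $Bh_{2}=0$, so $B=0$. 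Second, confluence immediately supplies rationally strictly cyclic vectors: fix any $h_{0}\neq 0$; for arbitrary $h\in\mathcal{H}$, take $B=I$, $A=0$ if $h=0$, and otherwise apply confluence to $h_{0},h$ to obtain injective $A,B\in\mathcal{A}$ with $Ah_{0}=Bh$. In particular this shows simultaneously that \emph{every} nonzero vector is rationally strictly cyclic.

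For the final assertion, let $X$ be densely defined and commuting with $\mathcal{A}$. Density of $\mathcal{D}(X)$ guarantees the existence of some nonzero $h_{0}\in\mathcal{D}(X)$; by what was just proved, this $h_{0}$ is rationally strictly cyclic. Proposition \ref{pro:commutant-is-quotient-ring} applied with this choice of $h_{0}$ then yields $A,B\in\mathcal{A}$ with $\ker B=\{0\}$ and $X\subset B^{-1}A$, as required. The only mildly delicate step in the argument is the kernel-propagation computation in the first half of (2) $\Rightarrow$ (1); everything else is formal bookkeeping with the commuting quotients $B^{-1}A$ already used in Proposition \ref{pro:commutant-is-quotient-ring}.
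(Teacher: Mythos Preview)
Your proof is correct and follows essentially the same route as the paper's: the same commutativity identity for (1) $\Rightarrow$ (2), the same kernel-propagation computation for (2) $\Rightarrow$ (1) (you phrase it as a direct argument that $B=0$ rather than a contradiction, but the content is identical), and the same appeal to Proposition~\ref{pro:commutant-is-quotient-ring} for the final assertion. Your explicit observation that one should pick the rationally strictly cyclic vector $h_{0}$ inside $\mathcal{D}(X)$ is exactly the point the paper leaves implicit.
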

\begin{proof}
Assume first that (1) holds, and $h_{1},h_{2}\in\mathcal{H}\setminus\{0\}$.
With the notation used earlier, we have\[
A_{h_{2}}B_{h_{1}}h_{1}=A_{h_{2}}A_{h_{1}}h_{0}=A_{h_{1}}B_{h_{2}}h_{2}.\]
The operators $A_{h_{1}},A_{h_{2}}$ are not zero, and therefore $A_{h_{2}}B_{h_{1}},A_{h_{1}}B_{h_{2}}$
are injective by hypothesis.

Conversely, assume that $\mathcal{A}$ is confluent. Clearly, every
nonzero vector is then rationally strictly cyclic. It remains to show
that every $B\in\mathcal{A}\setminus\{0\}$ is injective. Assume to
the contrary that $Bh_{1}=0$ for some $h_{1}\ne0$, and choose $h_{2}\notin\ker B$.
If $B_{1},B_{2}$ are as in Definition \ref{def:confluence}, we obtain\[
0=B_{1}Bh_{1}=BB_{1}h_{1}=BB_{2}h_{2}=B_{2}Bh_{2}.\]
This implies $Bh_{2}=0$, contrary to the choice of $h_{2}$. The
last assertion follows from Proposition \ref{pro:commutant-is-quotient-ring}.
\end{proof}
As an application of the results in this section, we show that some
other algebras of Toeplitz operators have the closability property.
Consider a bounded, connected open set $\Omega\subset\mathbb{C}$
bounded by $n+1$ analytic simple Jordan curves, and fix a point $\omega_{0}\in\Omega$.
The algebra $H^{\infty}(\Omega)$ consists of the bounded analytic
functions on $\Omega$, while $H_{\omega_{0}}^{2}(\Omega)$ is defined
as the space of analytic functions $f$ on $\Omega$ with the property
that $|f|^{2}$ has a harmonic majorant in $\Omega$. The norm on
$H_{\omega_{0}}^{2}(\Omega)$ is defined as\[
\|f\|_{2}^{2}=\inf\{u(\omega_{0}):u\text{ a harmonic majorant of }|f|^{2}\},\quad f\in H_{\omega_{0}}^{2}(\Omega).\]
Multiplication by a function $u\in H^{\infty}(\Omega)$ determines
a bounded operator $T_{u}$ on $H_{\omega_{0}}^{2}(\Omega)$.
\begin{prop}
\label{pro:omega-toeplitz-algebra-is-cp}The constant function $1\in H_{\omega_{0}}^{2}(\Omega)$
is a rationally strictly cyclic vector for the algebra $\{T_{u}:u\in H^{\infty}(\Omega)\}$.
In particular, this algebra has the closability property.
\end{prop}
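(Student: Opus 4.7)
The strategy follows the template established for $\mathcal{W}_S$ in Proposition \ref{pro:SandS(m)haveCP}: given any $f\in H^{2}_{\omega_{0}}(\Omega)$, I aim to produce $u,v\in H^{\infty}(\Omega)$ with $v\not\equiv 0$ and $vf=u$. Once such a pair is available, $T_{v}$ is automatically injective (multiplication by a nonzero analytic function on the connected open set $\Omega$ has no kernel), and the identity $T_{v}f=T_{u}\cdot 1$ verifies Definition \ref{def:rat-cyc-vect} with $h_{0}=1$. Theorem \ref{thm:abelian+RSCimplies-CP} then delivers the closability property for the commutative algebra $\{T_{u}:u\in H^{\infty}(\Omega)\}$.

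To construct $u$ and $v$, I would transfer the disk argument to $\Omega$ via the universal cover. Let $\pi\colon\mathbb{D}\to\Omega$ be the universal covering map, normalized so that $\pi(0)=\omega_{0}$, and let $\Gamma\subset\operatorname{Aut}(\mathbb{D})$ be its deck transformation group, a finitely generated Fuchsian group because $\Omega$ is $(n+1)$-connected with analytic boundary. The pullback $f\mapsto F:=f\circ\pi$ identifies $H^{2}_{\omega_{0}}(\Omega)$ isometrically with the subspace of $\Gamma$-invariant elements of $H^{2}(\mathbb{D})$, and likewise $H^{\infty}(\Omega)$ with the $\Gamma$-invariant elements of $H^{\infty}(\mathbb{D})$. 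Applying the disk construction used in the proof of Proposition \ref{pro:SandS(m)haveCP} to $F$ yields outer functions $V_{F},U_{F}\in H^{\infty}(\mathbb{D})$ with $V_{F}(0)>0$, $U_{F}=V_{F}F$, and boundary moduli $|V_{F}|=\min\{1,1/|F|\}$, $|U_{F}|=\min\{1,|F|\}$ a.e.\ on $\mathbb{T}$.

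The main obstacle, arising because $\Omega$ is multiply connected, is that $V_{F}$ and $U_{F}$ need not themselves be $\Gamma$-invariant. Since $F$ is $\Gamma$-invariant, $|V_{F}|$ is too; the outer functions $V_{F}$ and $V_{F}\circ\gamma$ thus share the same modulus, forcing their ratio to be a unimodular constant $\chi(\gamma)$. The map $\chi\colon\Gamma\to\mathbb{T}$ is a character, and one checks that $V_{F}\circ\gamma=\chi(\gamma)V_{F}$ and $U_{F}\circ\gamma=\chi(\gamma)U_{F}$ for every $\gamma\in\Gamma$. I would resolve this by invoking the existence, for every character of a finitely generated Fuchsian group of convergence type, of a nonzero function $w\in H^{\infty}(\mathbb{D})$ with $w\circ\gamma=\overline{\chi(\gamma)}\,w$ for all $\gamma\in\Gamma$. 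Such character-automorphic bounded analytic functions are constructed classically as Blaschke products over $\Gamma$-orbits, which are summable precisely because $\Gamma$ is the deck group of a finitely bordered planar domain; the character is then adjusted by multiplying the individual Blaschke factors by appropriate unimodular constants.

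Setting $v:=wV_{F}$ and $u:=wU_{F}$ then produces $\Gamma$-invariant elements of $H^{\infty}(\mathbb{D})$ which descend to bounded analytic functions on $\Omega$. The relation $vf=u$ persists on $\Omega$ and $v\not\equiv 0$ because $w$ and $V_{F}$ are both nonzero. This establishes that $1$ is rationally strictly cyclic, and the closability property follows from Theorem \ref{thm:abelian+RSCimplies-CP}.
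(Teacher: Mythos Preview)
Your proposal is correct and follows essentially the same route as the paper's proof: reduce to showing every $f\in H^{2}_{\omega_{0}}(\Omega)$ is a quotient of bounded analytic functions, lift to the universal cover, build the outer function with boundary modulus $\min\{1,1/|f\circ\pi|\}$, observe it is character-automorphic, and cancel the character with a suitably chosen Blaschke product (the paper cites Fisher's book for this last step). The only differences are notational---you track both $V_{F}$ and $U_{F}$ separately where the paper works with a single outer function and forms the product $bw(f\circ\pi)$ at the end.
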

The statement is equivalent to the following result. We refer to \cite{A-D}
and \cite{fish} for the function theoretical background.
\begin{lem}
\label{lem:omega-f=00003Du/v}For every function $f\in H_{\omega_{0}}^{2}(\Omega)$
there exist $u,v\in H^{\infty}(\Omega)$ such that $v\not\equiv0$
and $vf=u$.\end{lem}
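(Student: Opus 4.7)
The plan is to imitate the disc argument used in the proof of Proposition~\ref{pro:SandS(m)haveCP}: produce a nonzero $v\in H^{\infty}(\Omega)$ whose boundary modulus is essentially $\min\{1,1/|f|\}$, so that $u=vf$ is automatically bounded. If $f\equiv 0$ we take $v=1$ and $u=0$, so assume $f\not\equiv 0$ henceforth. The first ingredient I would invoke is the boundary behaviour of $H^{2}_{\omega_{0}}(\Omega)$ from \cite{A-D} and \cite{fish}: $f$ has nontangential boundary values $f^{*}$ a.e.\ on $\partial\Omega$ with respect to harmonic measure $\omega$, $f^{*}\in L^{2}(\partial\Omega,d\omega)$, and $\log|f^{*}|$ is integrable against $\omega$. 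In particular $\psi:=\log^{+}|f^{*}|$ belongs to $L^{1}(\partial\Omega,d\omega)$, and I let $\Psi$ denote its harmonic extension to $\Omega$, so that $\Psi\ge 0$ on $\Omega$.

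The main technical step is to build an analytic function whose real part is $-\Psi$ up to a bounded error. Were $\Omega$ simply connected, I could set $v=\exp(-\Psi-i\widetilde{\Psi})$ and be done. In the $(n+1)$-connected case, however, $-\Psi$ has a single-valued harmonic conjugate only after its periods $\int_{\gamma_{j}}*d\Psi$ around the interior boundary curves $\gamma_{1},\dots,\gamma_{n}$ are cancelled. The standard remedy available in the $H^{\infty}(\Omega)$ theory developed in \cite{fish} is to adjust by a real linear combination $\sum_{j=1}^{n}t_{j}\omega_{j}$ of the harmonic measures $\omega_{j}$ of the boundary components; the classical nondegeneracy of the period matrix of the $\omega_{j}$ guarantees that the $t_{j}$ can be chosen to annihilate every period of $-\Psi$. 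Because each $\omega_{j}$ takes values in $[0,1]$, the modified harmonic function $-\Psi+\sum_{j}t_{j}\omega_{j}$ is bounded above by a constant $C$, and exponentiating its analytic completion yields $v\in H^{\infty}(\Omega)$ with $v$ nowhere zero in $\Omega$ and $\log|v^{*}|=-\psi+\sum_{j}t_{j}\omega_{j}^{*}$ almost everywhere on $\partial\Omega$.

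Finally I would verify that $u:=vf\in H^{\infty}(\Omega)$: on $\partial\Omega$,
\[
\log|u^{*}|=\log|f^{*}|-\psi+\sum_{j}t_{j}\omega_{j}^{*}=\min\{0,\log|f^{*}|\}+\sum_{j}t_{j}\omega_{j}^{*},
\]
which is bounded above since each $\omega_{j}^{*}$ is bounded. As $u$ and $v$ lie in $H^{2}_{\omega_{0}}(\Omega)$ and have essentially bounded boundary values, the usual identification of $H^{\infty}(\Omega)$ with the bounded elements of $H^{2}_{\omega_{0}}(\Omega)$ places $u,v\in H^{\infty}(\Omega)$; and $v\not\equiv 0$ because it is zero-free. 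The only real obstacle is the period-cancellation step, which is the point where the multiply-connected character of $\Omega$ enters and forces us to borrow the function-theoretic machinery from \cite{A-D} and \cite{fish}; everything else is a straightforward adaptation of the outer-function construction used in the disc case.
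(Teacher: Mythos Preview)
Your argument is correct, but it follows a genuinely different route from the paper's.

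The paper lifts the problem to the unit disc via the universal covering map $\pi:\mathbb{D}\to\Omega$, builds the outer function $w$ on $\mathbb{D}$ with $|w|=\min\{1,1/|f\circ\pi|\}$ on $\mathbb{T}$, and then confronts the obstruction that $w$ is only \emph{modulus}-automorphic under the deck group $\Gamma$: one has $w\circ\varphi=\gamma(\varphi)w$ for some character $\gamma:\Gamma\to\mathbb{T}$. The fix is to multiply by a Blaschke product $b$ transforming by the conjugate character (supplied by \cite[Theorem~5.6.1]{fish}); then $bw$ and $bw(f\circ\pi)$ are genuinely $\Gamma$-invariant and descend to the desired $v,u\in H^{\infty}(\Omega)$.

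Your approach stays on $\Omega$ and meets the \emph{same} obstruction in its classical guise: the harmonic function $-\Psi$ need not have a single-valued conjugate because of its periods around $\gamma_{1},\dots,\gamma_{n}$. You cancel these with a combination $\sum t_{j}\omega_{j}$ of harmonic measures, using the nondegeneracy of the period matrix, and then exponentiate. The two devices are really the same phenomenon viewed from opposite ends of the covering map: characters of $\Gamma\cong\mathbb{Z}^{n}$ correspond to period classes on $\Omega$. What your route buys is a zero-free $v$ (being an exponential) and avoidance of covering-space machinery; what the paper's route buys is a direct reduction to the standard disc outer-function construction, at the price of invoking the existence of character-automorphic Blaschke products. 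One small stylistic point: once you know $-\Psi+\sum t_{j}\omega_{j}$ is bounded above on $\Omega$, you already have $|v|\le e^{C}$ \emph{in} $\Omega$, and since $\log|f|\le\Psi$ (the harmonic extension of $\log^{+}|f^{*}|$ majorizes the subharmonic $\log|f|$), you get $|u|=|vf|\le e^{C}$ directly on $\Omega$; the detour through boundary values and the identification of $H^{\infty}(\Omega)$ with bounded elements of $H^{2}_{\omega_{0}}(\Omega)$ is unnecessary.
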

\begin{proof}
Denote by $\pi:\mathbb{D}\to\Omega$ a (universal) covering map such
that $\pi(0)=\omega_{0}$, and denote by $\Gamma$ the corresponding
group of deck transformations. Thus, $\Gamma$ consists of those analytic
automorphisms $\varphi$ of $\mathbb{D}$ with the property that $\pi\circ\varphi=\pi$.
The map $f\mapsto f\circ\pi$ is an isometry from $H_{\omega_{0}}^{2}(\Omega)$
onto the space of those functions $g\in H^{2}$ such that $g\circ\varphi=g$
for every $\varphi\in\Gamma$.

Fix now $f\in H_{\omega_{0}}^{2}(\Omega)$, and construct an outer
function $w\in H^{2}$ such that $|w(\zeta)|=\min\{1,1/|f\circ\pi(\zeta)|\}$
for almost every $\zeta\in\mathbb{T}$. The function $w$ is obviously
modulus automorphic in the sense that $|w\circ\varphi|=|w|$ for every
$\varphi\in\Gamma$. It follows that there is a group homomorphism
$\gamma:\Gamma\to\mathbb{T}$ such that $w\circ\varphi=\gamma(\varphi)w$
for every $\varphi\in\Gamma$. Choose a modulus automorphic Blaschke
product $b\in H^{\infty}$ such that $b\circ\varphi=\overline{\gamma(\varphi)}b$
for $\gamma\in\Gamma$; see \cite[Theorem 5.6.1]{fish} for the construction
of such products. Then there exist functions $u,v\in H^{\infty}(\Omega)$
such that $v\circ\pi=bw$ and $u\circ\pi=bw(f\circ\pi)$. These functions
satisfy the requirements of the lemma.
\end{proof}

\section{Quasisimilar Algebras\label{sec:Quasisimilar-Algebras}}

We will now study the effect of quasisimilarity on the closability
property and the existence of rationally strictly cyclic vectors.
Recall that an operator $Q\in\mathcal{L}(\mathcal{H}_{1},\mathcal{H}_{2})$
is called a \emph{quasiaffinity} if it is injective and has dense
range.
\begin{defn}
\label{def:quasiaffinity-of-algebras}An algebra $\mathcal{A}_{1}\subset\mathcal{L}(\mathcal{H}_{1})$
is a \emph{quasiaffine transform} of an algebra $\mathcal{A}_{2}\subset\mathcal{L}(\mathcal{H}_{2})$
if there exists a quasiaffinity $Q\in\mathcal{L}(\mathcal{H}_{1},\mathcal{H}_{2})$
such that, for every $T_{2}\in\mathcal{A}_{2}$ we have $QT_{1}=T_{2}Q$
for some $T_{1}\in\mathcal{A}_{1}$. We write $\mathcal{A}_{1}\prec\mathcal{A}_{2}$
if $\mathcal{A}_{1}$ is a quasiaffine transform of $\mathcal{A}_{2}$.
\end{defn}
The relation $\mathcal{A}_{1}\prec\mathcal{A}_{2}$ can simply be
written as $Q^{-1}\mathcal{A}_{2}Q\subset\mathcal{A}_{1}$ for some
quasiaffinity $Q$.
\begin{prop}
\label{propolema:quasiaffine-and-stuff} Assume that $\mathcal{A}_{1}\subset\mathcal{L}(\mathcal{H}_{1})$
and $\mathcal{A}_{2}\subset\mathcal{L}(\mathcal{H}_{2})$ are unital
algebras such that $\mathcal{A}_{1}\prec\mathcal{A}_{2}$.
\begin{enumerate}
\item If $\mathcal{A}_{1}$ is commutative, then $\mathcal{A}_{2}$ is commutative
as well.
\item If $\mathcal{A}_{2}$ has the closability property, then so does $\mathcal{A}_{1}$.
\item If $\mathcal{A}_{2}$ is confluent, then so is $\mathcal{A}_{1}$.
\end{enumerate}
\end{prop}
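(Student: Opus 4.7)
The hypothesis $\mathcal{A}_1 \prec \mathcal{A}_2$ yields a quasiaffinity $Q \in \mathcal{L}(\mathcal{H}_1,\mathcal{H}_2)$ so that each $T_2 \in \mathcal{A}_2$ admits a $T_1 \in \mathcal{A}_1$ with $QT_1 = T_2 Q$. Since $Q$ is injective, this $T_1$ is unique, and the assignment $\varphi:\mathcal{A}_2 \to \mathcal{A}_1$, $\varphi(T_2) = T_1$, is a unital algebra homomorphism. This single observation drives all three parts. For (1), if $\mathcal{A}_1$ is commutative and $T_2, T_2' \in \mathcal{A}_2$, then
\[
(T_2 T_2' - T_2' T_2) Q = Q\bigl(\varphi(T_2)\varphi(T_2') - \varphi(T_2')\varphi(T_2)\bigr) = 0,
\]
and the density of the range of $Q$ forces the bounded operator $T_2 T_2' - T_2' T_2$ to vanish.

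For (2), my plan is to transport a densely defined $X$ commuting with $\mathcal{A}_1$ to the other side. I define $Y$ on $\mathcal{D}(Y) = Q\mathcal{D}(X)$ by $Y(Qh) = QXh$ for $h \in \mathcal{D}(X)$; injectivity of $Q$ makes this well-defined, and a brief estimate using density of $\mathcal{D}(X)$ together with density of the range of $Q$ shows $\mathcal{D}(Y)$ is dense in $\mathcal{H}_2$. To see that $Y$ commutes with $\mathcal{A}_2$, fix $T_2 \in \mathcal{A}_2$ and $h \in \mathcal{D}(X)$; then $T_2 Qh = Q\varphi(T_2) h$ with $\varphi(T_2) h \in \mathcal{D}(X)$, so
\[
Y(T_2 Qh) = QX\varphi(T_2) h = Q\varphi(T_2) X h = T_2 Q X h = T_2 Y(Qh).
\]
By hypothesis $Y$ is closable. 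If $h_n \in \mathcal{D}(X)$ satisfies $h_n \to 0$ and $X h_n \to k$, continuity of $Q$ gives $Q h_n \to 0$ and $Y(Q h_n) = Q X h_n \to Qk$, so closability of $Y$ forces $Qk = 0$ and then $k = 0$ by injectivity of $Q$.

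For (3), given nonzero $x, x' \in \mathcal{H}_1$, the vectors $Qx, Qx'$ are nonzero in $\mathcal{H}_2$, so confluence furnishes injective $B_2, B_2' \in \mathcal{A}_2$ with $B_2 Qx = B_2' Qx'$. Setting $B_1 = \varphi(B_2)$ and $B_1' = \varphi(B_2')$ gives $QB_1 x = QB_1' x'$, and injectivity of $Q$ upgrades this to $B_1 x = B_1' x'$. Injectivity of $B_1$ follows from that of $B_2$ and $Q$ via the chain $B_1 y = 0 \Rightarrow QB_1 y = 0 \Rightarrow B_2 Qy = 0 \Rightarrow Qy = 0 \Rightarrow y = 0$, and likewise for $B_1'$. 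I do not expect any step to be genuinely difficult; the only point requiring care is the verification in (2) that the lifted transformation $Y$ has dense domain and commutes with $\mathcal{A}_2$, since this is where both features of the quasiaffinity---injectivity and dense range---are used simultaneously.
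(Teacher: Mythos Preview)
Your proof is correct and follows essentially the same route as the paper: both use the injective homomorphism $T_2\mapsto Q^{-1}T_2Q$ from $\mathcal{A}_2$ into $\mathcal{A}_1$ for part (1), transport $X$ to $Y=QXQ^{-1}$ on $Q\mathcal{D}(X)$ for part (2), and pull back the injective operators from $\mathcal{A}_2$ via this homomorphism for part (3). You supply a few extra verifications (density of $Q\mathcal{D}(X)$, injectivity of the pulled-back operators) that the paper leaves implicit, but there is no substantive difference in strategy.
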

\begin{proof}
Let $Q$ be as in Definition \ref{def:quasiaffinity-of-algebras}.
Since the map $T\mapsto Q^{-1}TQ$ is obviously an injective algebra
homomorphism on $\mathcal{A}_{2}$, part (1) is immediate.

To prove (2), let $X$ be a densely defined linear transformation
commuting with $\mathcal{A}_{1}$. Define the linear transformation
$Y=QXQ^{-1}$ on the dense subspace $\mathcal{D}(Y)=Q\mathcal{D}(X)$.
Since all the operators $T_{2}\in\mathcal{A}_{2}$ have the property
that $Q^{-1}T_{2}Q$ is in $\mathcal{A}_{1}$, it follows easily that
$Y$ commutes with $\mathcal{A}_{2}$. Assume now that $\mathcal{A}_{2}$
has the closability property, so that $Y$ is closable. We will verify
that $X$ is closable as well. Assume that $h_{n}\in\mathcal{D}(X)$
are such that $h_{n}\to0$ and $Xh_{n}\to k$ as $n\to\infty$. Obviously
then $\mathcal{D}(Y)\ni Qh_{n}\to0$ and $YQh_{n}\to Qk$. We deduce
that $Qk=0$, and therefore $k=0$ since $Q$ is a quasiaffinity.

Finally, assume that $\mathcal{A}_{2}$ is confluent, and $h_{1},h_{2}\in\mathcal{H}\setminus\{0\}$.
We choose injective $C_{1},C_{2}\in\mathcal{A}_{2}$ so that $C_{1}Qh_{1}=C_{2}Qh_{2}$,
and observe that $B_{1}h_{1}=B_{2}h_{2}$, where $B_{j}=Q^{-1}C_{j}Q\in\mathcal{A}_{1}$
are injective.\end{proof}
\begin{defn}
\label{def:quasisimilarity-for-algs}An algebra $\mathcal{A}_{1}\subset\mathcal{L}(\mathcal{H}_{1})$
is \emph{quasisimilar} to an algebra $\mathcal{A}_{2}\subset\mathcal{L}(\mathcal{H}_{2})$
if there exist quasiaffinities $Q\in\mathcal{L}(\mathcal{H}_{1},\mathcal{H}_{2})$
and $R\in\mathcal{L}(\mathcal{H}_{2},\mathcal{H}_{1})$ such that
$Q^{-1}\mathcal{A}_{2}Q\subset\mathcal{A}_{1}$, $R^{-1}\mathcal{A}_{1}R\subset\mathcal{A}_{2}$,
$QR\in\mathcal{A}_{2}'$, and $RQ\in\mathcal{A}_{1}'$. We write $\mathcal{A}_{1}\sim\mathcal{A}_{2}$
if $\mathcal{A}_{1}$ is quasisimilar to $\mathcal{A}_{2}$.
\end{defn}
Using the proofs of parts (1) and (2) of the following result, it
is easy to see that quasisimilarity is an equivalence relation.
\begin{prop}
\label{pro:facts-about-quasisimilarity}Assume that $\mathcal{A}_{1}$
and $\mathcal{A}_{2}$ are commutative quasisimilar algebras, and
$Q,R$ satisfy the conditions of Definition \ref{def:quasisimilarity-for-algs}.
\begin{enumerate}
\item We have $Q^{-1}\mathcal{A}_{2}Q=\mathcal{A}_{1}$ and $R^{-1}\mathcal{A}_{1}R=\mathcal{A}_{2}$.
\item The maps $T_{2}\mapsto Q^{-1}T_{2}Q$ and $T_{1}\mapsto R^{-1}T_{1}R$
are mutually inverse algebra isomorphisms between $\mathcal{A}_{1}$
and $\mathcal{A}_{2}$.
\item The commutant $\mathcal{A}_{1}'$ is commutative if and only if $\mathcal{A}_{2}'$
is commutative.
\item If $h_{1}\in\mathcal{H}_{1}$ is rationally strictly cyclic for $\mathcal{A}_{1}$
then $Qh_{1}$ is rationally strictly cyclic for $\mathcal{A}_{2}$.
\item The algebra $\mathcal{A}_{1}$ is confluent if and only if $\mathcal{A}_{2}$
is confluent.
\item The algebra $\mathcal{A}_{1}'$ is confluent if and only if $\mathcal{A}_{2}'$
is confluent.
\item The algebra $\mathcal{A}_{1}'$ has the closability property if and
only if $\mathcal{A}_{2}'$ does.
\end{enumerate}
\end{prop}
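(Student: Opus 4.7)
The statement is symmetric in $\mathcal{H}_1$ and $\mathcal{H}_2$, so it suffices to show that the closability property of $\mathcal{A}_2'$ implies that of $\mathcal{A}_1'$. Given a densely defined linear transformation $X$ on $\mathcal{H}_1$ commuting with $\mathcal{A}_1'$, I would transfer it to a densely defined transformation $\tilde Y$ on $\mathcal{H}_2$ commuting with $\mathcal{A}_2'$, in such a way that closability of $\tilde Y$ forces closability of $X$.

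The preliminary step is to observe that for every $T\in\mathcal{A}_2'$, the bounded operator $RTQ$ lies in $\mathcal{A}_1'$. Fix $S_1\in\mathcal{A}_1$; by parts (1) and (2), there is $S_2\in\mathcal{A}_2$ with $QS_1=S_2Q$ and then $S_1''\in\mathcal{A}_1$ with $S_1''R=RS_2$. Since $T\in\mathcal{A}_2'$ commutes with $S_2$, a direct computation gives $(RTQ)S_1=RS_2TQ=S_1''(RTQ)$. But $RQ\in\mathcal{A}_1'$ commutes with $S_1\in\mathcal{A}_1$, so $S_1''RQ=RS_2Q=RQS_1=S_1RQ$; since $RQ$ has dense range, this forces $S_1''=S_1$, and hence $RTQ\in\mathcal{A}_1'$.

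I now define $\tilde Y$ on the linear span $\mathcal{D}(\tilde Y)=\mathrm{span}\{TQh:T\in\mathcal{A}_2',\,h\in\mathcal{D}(X)\}\subset\mathcal{H}_2$ by $\tilde Y(TQh)=TQXh$, extended by linearity. The domain contains $Q\mathcal{D}(X)$ (taking $T=I$), which is dense since $Q$ is continuous with dense range, so $\tilde Y$ is densely defined, and it plainly commutes with $\mathcal{A}_2'$ since $(T'T)Qh\in\mathcal{D}(\tilde Y)$ and $\tilde Y((T'T)Qh)=T'(TQXh)$ for $T',T\in\mathcal{A}_2'$. The step I expect to be the main obstacle is the well-definedness: if $\sum_iT_iQh_i=0$, apply $R$ to obtain $\sum_i(RT_iQ)h_i=0$; the vectors $(RT_iQ)h_i$ lie in $\mathcal{D}(X)$ because $RT_iQ\in\mathcal{A}_1'$, and $X$ commutes with each $RT_iQ$, so linearity of $X$ gives $\sum_i(RT_iQ)Xh_i=X(0)=0$, that is, $R\sum_iT_iQXh_i=0$; injectivity of $R$ then yields $\sum_iT_iQXh_i=0$.

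With $\tilde Y$ in hand, the closability property of $\mathcal{A}_2'$ renders $\tilde Y$ closable. To conclude, suppose $h_n\in\mathcal{D}(X)$ with $h_n\to 0$ and $Xh_n\to k$; then $Qh_n\in Q\mathcal{D}(X)\subset\mathcal{D}(\tilde Y)$, $Qh_n\to 0$, and $\tilde Y(Qh_n)=QXh_n\to Qk$. Closability of $\tilde Y$ forces $Qk=0$, and injectivity of $Q$ gives $k=0$, so $X$ is closable. Applying the same construction with the roles of $(\mathcal{H}_1,\mathcal{A}_1,Q)$ and $(\mathcal{H}_2,\mathcal{A}_2,R)$ exchanged yields the reverse implication.
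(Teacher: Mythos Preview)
Your argument is correct and follows essentially the same route as the paper's proof: you build the transferred transformation $\tilde Y$ on the $\mathcal{A}_2'$-saturation of $Q\mathcal{D}(X)$, verify well-definedness by applying $R$ and using that $RTQ\in\mathcal{A}_1'$ (the same fact the paper establishes in the proof of part~(3)), and then pull closability back through $Q$. The only cosmetic difference is that the paper runs the implication in the other direction (assuming $\mathcal{A}_1'$ has the closability property and transferring an $X$ that commutes with $\mathcal{A}_2'$), which by symmetry is the same argument with the indices swapped.
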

\begin{proof}
Define $\Phi:\mathcal{A}_{2}\to\mathcal{A}_{1}$ and $\Psi:\mathcal{A}_{1}\to\mathcal{A}_{2}$
by setting $\Phi(T_{2})=Q^{-1}T_{2}Q$ and $\Psi(T_{1})=R^{-1}T_{1}R$.
We have\[
\Psi(\Phi(T_{2}))=R^{-1}Q^{-1}T_{2}QR=R^{-1}Q^{-1}QRT_{2}=T_{2},\quad T_{2}\in\mathcal{A}_{2},\]
and similarly $\Phi(\Psi(T_{1}))=T_{1}$ for $T_{1}\in\mathcal{A}_{1}$.
This proves (2), and (1) follows from (2).

Assume now that $\mathcal{A}_{1}'$ is commutative and $A,B\in\mathcal{A}_{2}'$.
We claim that $RAQ$ and $RBQ$ belong to $\mathcal{A}'_{1}$. Indeed,\begin{eqnarray*}
T_{1}RAQ & = & R(R^{-1}T_{1}R)AQ=RA(R^{-1}T_{1}R)Q\\
 & = & RAR^{-1}T_{1}(RQ)=RAR^{-1}(RQ)T_{1}=RAQT_{1}\end{eqnarray*}
for $T_{1}\in\mathcal{A}_{1}$. We deduce that $RAQRBQ=RBQRAQ$ and
hence $AQRB=BQRA$. Taking $A$ or $B$ to be the identity operator,
we deduce that $QR$ commutes with $B$ and $A$, and therefore $QRAB=QRBA$,
and finally the desired equality $AB=BA$.

To prove (4), assume that $h_{1}$ is rationally strictly cyclic for
$\mathcal{A}_{1}$. Proposition \ref{pro:commutant-is-quotient-ring}
implies the existence of $A_{1},B_{1}\in\mathcal{A}_{1}$ such that
$\ker B_{1}=\{0\}$ and $RQ=B_{1}^{-1}A_{1}$. Set $A_{2}=R^{-1}A_{1}R,B_{2}=R^{-1}B_{1}R\in\mathcal{A}_{2}$,
and observe that\[
B_{2}QR=R^{-1}(B_{1}RQ)R=R^{-1}A_{1}R=A_{2}.\]
To show that $Rh_{1}$ is rationally strictly cyclic for $\mathcal{A}_{2}$,
fix a vector $h_{2}\in\mathcal{H}_{2}$, and choose $S_{1},T_{1}\in\mathcal{A}_{1}$
such that $\ker T_{1}=\{0\}$ and $T_{1}Rh_{2}=S_{1}h_{1}$. Set now
$T_{2}=R^{-1}T_{1}R,S_{2}=R^{-1}S_{1}R\in\mathcal{A}_{2}$. We have\[
RQRT_{2}h_{2}=RQT_{1}Rh_{2}=RQS_{1}h_{1}=S_{1}RQh_{1}=RS_{2}Qh_{1},\]
so that $QRT_{2}h_{2}=S_{2}Qh_{1}$. Applying $B_{2}$ to both sides
we obtain $A_{2}T_{2}h_{2}=B_{2}S_{2}Qh_{1}$, and strict cyclicity
follows because $A_{2}T_{2},B_{2}S_{2}\in\mathcal{A}_{2}$ and $\ker(B_{2}S_{2})=\{0\}$.

Assertion (5) follows easily from (4) and Proposition \ref{pro:confluent=00003Drsc+noKer},
or directly from Proposition \ref{propolema:quasiaffine-and-stuff}(3).

Assume now that $\mathcal{A}'_{1}$ is confluent, and let $h,k\in\mathcal{H}_{2}$
be two nonzero vectors. Then there exist then injective operators
$A_{1},B_{1}\in\mathcal{A}'_{1}$ such that $A_{1}Rh=B_{1}Rk$. Thus
we have $A_{2}h=B_{2}k$, where $A_{2}=QA_{1}R$ and $B_{2}=QB_{1}R$
are injective operators in $\mathcal{A}'_{2}$. This proves (6).

Finally, assume that $\mathcal{A}_{1}'$ has the closability property,
and let $X$ be a densely defined linear transformation commuting
with $\mathcal{A}_{2}'$. As in the proof of Lemma \ref{propolema:quasiaffine-and-stuff}(2),
to prove (7) it will suffice to show that the linear transformation
$Y_{0}=QXQ^{-1}$ defined on the dense space $\mathcal{D}(Y_{0})=Q\mathcal{D}(X)$
is closable. To show this, we will define a linear transformation
$Y\supset Y_{0}$ which commutes with $\mathcal{A}'_{1}$. Its domain
$\mathcal{D}(Y)$ consists of all the finite sums of the form $\sum_{n}T_{n}Qh_{n}$,
where $T_{n}\in\mathcal{A}'_{1}$ and $h_{n}\in\mathcal{D}(X)$, and\[
Y\sum_{n}T_{n}Qh_{n}=\sum_{n}T_{n}QXh_{n}.\]
 To show that $Y$ is well-defined, it will suffice to prove that
$\sum_{n}T_{n}Qh_{n}=0$ implies $R\sum_{n}T_{n}QXh_{n}=0$. Indeed,
since $RT_{n}Q\in\mathcal{A}'_{2}$, we have $RT_{n}Qh_{n}\in\mathcal{D}(X)$
and \[
\sum_{n}RT_{n}QXh_{n}=\sum_{n}XRT_{n}Qh_{n}=XR\sum_{n}T_{n}Qh_{n}=0.\]
The fact that $Y$ commutes with every $T\in\mathcal{A}'_{1}$ is
easily verified. If $\sum_{n}T_{n}Qh_{n}\in\mathcal{D}(Y)$ then clearly
$\sum_{n}TT_{n}Qh_{n}\in\mathcal{D}(Y)$, and\[
YT\sum_{n}T_{n}Qh=\sum_{n}TT_{n}QXh_{n}=TY\sum_{n}T_{n}Qh_{n}.\]
The inclusion $Y\supset Y_{0}$ is verified by taking $T_{n}=I$.
\end{proof}
We will be using the results in this section for the special case
of algebras generated by a completely nonunitary contraction $T\in\mathcal{L}(\mathcal{H})$.
For such a contraction we will write\[
H^{\infty}(T)=\{u(T):u\in H^{\infty}\}.\]

Parts (1) and (2) of the following lemma are easily verified; in fact
Definition \ref{def:quasiaffinity-of-algebras} was formulated so
as to make part (2) correct.
\begin{lem}
\label{lem:quasisim-for-cnu-and-for-hinfty}Let $T_{1}$ and $T_{2}$
be two completely nonunitary contractions.
\begin{enumerate}
\item If $T_{1}\prec T_{2}$ then $H^{\infty}(T_{1})\prec H^{\infty}(T_{2}).$
\item If $T_{1}\sim T_{2}$ then $H^{\infty}(T_{1})\sim H^{\infty}(T_{2})$.
\item If $H^{\infty}(T_{1})\sim H^{\infty}(T_{2})$ and $T_{1}$ is of class
$C_{0}$, then $T_{2}$ is also of class $C_{0}$.
\item If $H^{\infty}(T_{1})\sim H^{\infty}(T_{2})$ and $T_{1}$ is not
of class $C_{0}$, then $T_{1}\sim\varphi(T_{2})$ for some conformal
automorphism $\varphi$ of $\mathbb{D}$.
\end{enumerate}
\end{lem}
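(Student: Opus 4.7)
Parts (1) and (2) are routine applications of the Sz.-Nagy--Foias functional calculus. For (1), I extend the intertwining $QT_1=T_2Q$ to $Qu(T_1)=u(T_2)Q$ for every $u\in H^{\infty}$ by approximating $u$ weak-$\ast$ by polynomials so that both sides converge in WOT; this is exactly $H^{\infty}(T_1)\prec H^{\infty}(T_2)$. For (2), apply (1) in both directions; the direct computation $QRT_2=QT_1R=T_2QR$ gives $QR\in\{T_2\}'$, and since $H^{\infty}(T_2)\subset\{T_2\}''$ (the WOT-closure of polynomials in $T_2$ lies in the bicommutant) and $\{T_2\}'=\{T_2\}'''$, we obtain $QR\in H^{\infty}(T_2)'$; similarly $RQ\in H^{\infty}(T_1)'$.

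For (3) and (4), fix $Q,R$ implementing the quasisimilarity and let $\Phi(A)=Q^{-1}AQ:H^{\infty}(T_2)\to H^{\infty}(T_1)$ and $\Psi(A)=R^{-1}AR:H^{\infty}(T_1)\to H^{\infty}(T_2)$ be the mutually inverse algebra isomorphisms of Proposition \ref{pro:facts-about-quasisimilarity}. Choose $\varphi_1,\varphi_2\in H^{\infty}$ with $\Psi(T_1)=\varphi_1(T_2)$ and $\Phi(T_2)=\varphi_2(T_1)$, so that
\[
R\varphi_1(T_2)=T_1R,\qquad Q\varphi_2(T_1)=T_2Q.
\]
For (3), assume $T_1$ is of class $C_0$ with minimal function $m$. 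The first relation gives $\varphi_1(T_2)\prec T_1$, so $\varphi_1(T_2)$ is of class $C_0$ with some minimal function $v\mid m$ by the standard stability of $C_0$ under quasiaffine transforms \cite{Berco}. If $\varphi_1$ is non-constant with $\|\varphi_1\|_{\infty}\le 1$, then $\varphi_1(\mathbb{D})\subset\mathbb{D}$ by the open mapping theorem, $v\circ\varphi_1\in H^{\infty}\setminus\{0\}$, and the composition rule of the SNF calculus gives $(v\circ\varphi_1)(T_2)=v(\varphi_1(T_2))=0$, so $T_2$ is $C_0$. If $\varphi_1$ is a constant $c$, then $T_1R=cR$ and the quasiaffinity of $R$ force $T_1=cI$; hence $H^{\infty}(T_1)\cong\mathbb{C}$, so $H^{\infty}(T_2)\cong\mathbb{C}$, and $T_2$ is a scalar contraction, trivially of class $C_0$.

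For (4), assume $T_1$ is not of class $C_0$; by (3) neither is $T_2$. The identity $\Phi(\varphi_1(T_2))=\Phi(\Psi(T_1))=T_1$ yields $QT_1=\varphi_1(T_2)Q$, which combined with $R\varphi_1(T_2)=T_1R$ and the fact that $QR,RQ$ lie in the appropriate commutants gives $T_1\sim\varphi_1(T_2)$. To identify $\varphi_1$ as a conformal automorphism of $\mathbb{D}$, I apply part (1) of this lemma to the intertwining $QT_1=\varphi_1(T_2)Q$: for every $u\in H^{\infty}$, $Qu(T_1)=u(\varphi_1(T_2))Q$. Specialising to $u=\varphi_2$ and invoking the composition rule $u(\varphi_1(T_2))=(u\circ\varphi_1)(T_2)$ gives $Q\varphi_2(T_1)=(\varphi_2\circ\varphi_1)(T_2)Q$. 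Comparing with $Q\varphi_2(T_1)=T_2Q$ and using that $Q$ has dense range, $(\varphi_2\circ\varphi_1-z)(T_2)=0$; since $T_2$ is not of class $C_0$, this forces $\varphi_2\circ\varphi_1=z$, and symmetrically $\varphi_1\circ\varphi_2=z$. Hence $\varphi_1$ is a holomorphic self-map of $\mathbb{D}$ with a holomorphic inverse, i.e.\ a conformal automorphism of $\mathbb{D}$. The principal obstacle in the above argument is the norm bound $\|\varphi_i\|_{\infty}\le 1$, needed both to ensure $\varphi_1(T_2)$ is a cnu contraction (so that part (1) may be applied with $\varphi_1(T_2)$ in place of $T_2$) and to interpret the compositions $\varphi_i\circ\varphi_j$ as elements of $H^{\infty}$. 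I expect this to follow from a contractive analysis using the cnu structure of $T_1,T_2$, possibly after a preliminary normalization of $Q,R$.
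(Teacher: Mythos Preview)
Your treatment of (1) and (2) is fine and matches the paper's brief remark that these are easily verified.

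The gap you yourself flag---the bound $\|\varphi_i\|_\infty\le 1$---is not a side issue but the crux of (3) and (4), and your proposal does not close it. In (3), the composition rule $(v\circ\varphi_1)(T_2)=v(\varphi_1(T_2))$ of the Sz.-Nagy--Foias calculus requires $\varphi_1(\mathbb{D})\subset\mathbb{D}$; without it, $v\circ\varphi_1$ need not lie in $H^\infty$, and $\varphi_1(T_2)$ need not be a cnu contraction (so part (1) cannot be invoked for it). There is no direct ``contractive analysis'' that extracts this from $R\varphi_1(T_2)=T_1R$ alone, since $R$ is only a quasiaffinity. The same obstruction blocks the deduction of $\varphi_2\circ\varphi_1=\mathrm{id}$ in (4).

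The paper avoids the problem. For (3) it argues purely algebraically: the quasisimilarity gives an algebra isomorphism $H^\infty(T_1)\cong H^\infty(T_2)$. If $T_1$ is $C_0$ and not a scalar, factor its minimal function $m=m_1m_2$ with nonconstant inner $m_i$; then $m_1(T_1),m_2(T_1)$ are nonzero zero-divisors in $H^\infty(T_1)$, hence $H^\infty(T_2)$ has zero-divisors, forcing $T_2\in C_0$. The scalar case is dispatched by a dimension count. For (4), once neither $T_i$ is $C_0$, both calculi $u\mapsto u(T_i)$ are isomorphisms $H^\infty\to H^\infty(T_i)$, so the quasisimilarity induces an algebra automorphism $\Phi$ of $H^\infty$ itself. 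Setting $\varphi=\Phi(\mathrm{id}_{\mathbb{D}})$, the spectrum of $\varphi$ in the Banach algebra $H^\infty$ equals that of $\mathrm{id}_{\mathbb{D}}$, namely $\overline{\mathbb{D}}$; since $\varphi(\mathbb{D})$ is contained in this spectrum and (by the open mapping theorem) open, $\varphi(\mathbb{D})\subset\mathbb{D}$. This spectral step is exactly what supplies the missing bound; after it, the argument that $\Phi(u)=u\circ\varphi$ and that $\varphi$ is a conformal automorphism proceeds much as you outline.
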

\begin{proof}
To prove (3), observe that $H^{\infty}(T_{1})\sim H^{\infty}(T_{2})$
implies that $H^{\infty}(T_{2})$ is isomorphic to $H^{\infty}(T_{1})$.
Assume that $T_{2}$ is of class $C_{0}$. If $T_{1}$ is a scalar
multiple of the identity, then $H^{\infty}(T_{1})=\mathbb{C}I$, and
therefore $H^{\infty}(T_{2})=\mathbb{C}I$ and then $T_{2}$ must
be a scalar multiple of the identity, hence of class $C_{0}$. If
$T_{1}$ is not a scalar multiple of the identity, then $H^{\infty}(T_{1})$
has zero divisors. Indeed, in this case the minimal function $m$
of $T_{1}$ can be factored into a product $m=m_{1}m_{2}$ of two
nonconstant inner functions, and then $m_{1}(T_{1})\ne0\ne m_{2}(T_{1})$
while $m_{1}(T_{1})m_{2}(T_{1})=0$. We conclude that $H^{\infty}(T_{2})$
must also have zero divisors, and this obviously implies that $T_{2}$
is of class $C_{0}$.

Finally, assume that $H^{\infty}(T_{1})\sim H^{\infty}(T_{2})$ and
$T_{1}$ (as well as $T_{2}$ by part (3)) is not of class $C_{0}$.
Let $Q$ and $R$ be quasiaffinities satisfying the conditions of
Definition \ref{def:quasiaffinity-of-algebras} for the algebras $\mathcal{A}_{1}=H^{\infty}(T_{1})$
and $\mathcal{A}_{2}=H^{\infty}(T_{2})$. The hypothesis implies that
the maps $u\mapsto u(T_{1})$ and $u\mapsto u(T_{2})$ are algebra
isomorphisms from $H^{\infty}$ to $H^{\infty}(T_{1})$ and $H^{\infty}(T_{2})$,
respectively. Thus, for every $u\in H^{\infty}$ there exists a unique
$v\in H^{\infty}$ satisfying $v(T_{2})=R^{-1}u(T_{1})R$. The map
$\Phi:u\mapsto v$ is an algebra automorphism of $H^{\infty}$. In
particular, the function $\varphi=\Phi(\text{id}_{\mathbb{D}})$ must
have spectrum (in $H^{\infty}$) equal to $\overline{\mathbb{D}}$,
so that $\varphi(\mathbb{D})=\mathbb{D}$. We claim that $\Phi(u)=u\circ\varphi$
for every $u\in H^{\infty}$. Indeed, given $\lambda\in\mathbb{D}$,
we can factor $u(z)-u(\varphi(\lambda))=(z-\lambda)w$ for some $w\in H^{\infty}$,
so that $\Phi(u)-u(\varphi(\lambda))=(\varphi-\varphi(\lambda))\Phi(w)$.
The equality $(\Phi(u))(\lambda)=u(\varphi(\lambda))$ follows immediately.
Since $\Phi$ is an automorphism, it follows that $\varphi$ is a
conformal automorphism of $\mathbb{D}$, and clearly $T_{1}\sim\varphi(T_{2})$.\end{proof}
\begin{cor}
\label{cor:Tqsim-S-implies-confl}Let $T$ be a completely nonunitary
contraction. If $T\sim S$ then $H^{\infty}(T)$ is confluent. If
$T\sim S(m)$ then $H^{\infty}(T)$ has a rationally strictly cyclic
vector.\end{cor}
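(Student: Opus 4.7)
The plan is to derive both statements by combining the quasisimilarity transfer results of Section~\ref{sec:Quasisimilar-Algebras} with the facts about $\mathcal{W}_S$ and $\mathcal{W}_{S(m)}$ already established in Lemma~\ref{lem:some-algebras-with-RSC}, after identifying the algebras $H^\infty(S)$ and $H^\infty(S(m))$ in the obvious way.

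First, I would note that the Sz.-Nagy--Foias functional calculus yields $H^\infty(S)=\{T_u:u\in H^\infty\}=\mathcal{W}_S$, and by Sarason's theorem (as recalled in Section~\ref{sec:Preliminaries}) $H^\infty(S(m))=\{u(S(m)):u\in H^\infty\}=\mathcal{W}_{S(m)}$. Both of these algebras are commutative. By Lemma~\ref{lem:quasisim-for-cnu-and-for-hinfty}(2), $T\sim S$ implies $H^\infty(T)\sim H^\infty(S)=\mathcal{W}_S$, and $T\sim S(m)$ implies $H^\infty(T)\sim H^\infty(S(m))=\mathcal{W}_{S(m)}$.

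For the first assertion, I would show that $\mathcal{W}_S$ is confluent. By Lemma~\ref{lem:some-algebras-with-RSC}(1) it has the rationally strictly cyclic vector $1\in H^2$, and every nonzero element $T_u\in\mathcal{W}_S$ (with $u\in H^\infty\setminus\{0\}$) is injective, since the zero set of a nonzero $H^\infty$ function has measure zero on $\mathbb{T}$, so $uf=0$ in $H^2$ forces $f=0$. Proposition~\ref{pro:confluent=00003Drsc+noKer} then gives the confluence of $\mathcal{W}_S$, and Proposition~\ref{pro:facts-about-quasisimilarity}(5) transports this to $H^\infty(T)$.

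For the second assertion, Lemma~\ref{lem:some-algebras-with-RSC}(2) provides the rationally strictly cyclic vector $1-\overline{m(0)}m$ for $\mathcal{W}_{S(m)}$. Then Proposition~\ref{pro:facts-about-quasisimilarity}(4), applied to the quasisimilarity of $H^\infty(T)$ and $H^\infty(S(m))$ with roles reversed (i.e., using the quasiaffinity going from $\mathcal{W}_{S(m)}$ to $H^\infty(T)$), produces the image of this vector as a rationally strictly cyclic vector for $H^\infty(T)$.

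No serious obstacle is expected: the argument is essentially a bookkeeping exercise invoking the machinery already developed. The only mild care required is to make sure $H^\infty(S)$ and $H^\infty(S(m))$ are correctly identified with $\mathcal{W}_S$ and $\mathcal{W}_{S(m)}$, and that in applying Proposition~\ref{pro:facts-about-quasisimilarity}(4) one uses the correct direction of the quasisimilarity so that the known strictly cyclic vector is pushed over to $H^\infty(T)$.
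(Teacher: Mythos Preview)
Your proposal is correct and follows essentially the same approach as the paper: identify $H^\infty(S)=\mathcal{W}_S$ and $H^\infty(S(m))=\mathcal{W}_{S(m)}$, pass from $T\sim S$ (resp.\ $T\sim S(m)$) to quasisimilarity of the corresponding $H^\infty$-algebras via Lemma~\ref{lem:quasisim-for-cnu-and-for-hinfty}(2), and then invoke Proposition~\ref{pro:facts-about-quasisimilarity}(5) and~(4). You are simply more explicit than the paper in spelling out why $\mathcal{W}_S$ is confluent (rationally strictly cyclic vector plus injectivity of every nonzero $T_u$) and in flagging the direction in which Proposition~\ref{pro:facts-about-quasisimilarity}(4) is applied.
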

\begin{proof}
It suffices to observe that $H^{\infty}(S)=\mathcal{W}_{S}$, $H^{\infty}(S(m))=\mathcal{W}_{S(m)}$,
and to apply Proposition \ref{pro:facts-about-quasisimilarity}(5)
and (4).
\end{proof}
For operators of class $C_{0}$, the converse of the preceding result
is also true. The case of confluent algebras of the form $H^{\infty}(T)$
will be discussed more thoroughly in the remaining two sections of
the paper.
\begin{prop}
\label{pro:strictly-cyclic-and-C0}Assume that $T$ is a completely
nonunitary contraction such that $H^{\infty}(T)$ has a rationally
strictly cyclic vector.
\begin{enumerate}
\item If there exists $f\in H^{\infty}\setminus\{0\}$ such that $\ker f(T)\ne\{0\}$,
 then $T$ is of class $C_{0}$ and $T\sim S(m)$, where $m$ is the
minimal function of $T$.
\item If $\ker f(T)=\{0\}$ for every $f\in H^{\infty}\setminus\{0\}$,
then $H^{\infty}(T)$ is confluent.
\end{enumerate}
\end{prop}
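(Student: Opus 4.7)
Part~(2) will follow directly from Proposition~\ref{pro:confluent=00003Drsc+noKer}: the hypothesis $\ker f(T)=\{0\}$ for every nonzero $f\in H^{\infty}$ says exactly that every nonzero operator in the commutative unital algebra $H^{\infty}(T)$ is injective, so in combination with the given rationally strictly cyclic vector it forces confluence.

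For Part~(1), let $h_{0}$ be a rationally strictly cyclic vector for $H^{\infty}(T)$. The first step is to show $T$ must itself be of class~$C_{0}$. I would pick a nonzero $h\in\ker f(T)$ and apply Definition~\ref{def:rat-cyc-vect} to obtain $a,b\in H^{\infty}$ with $b(T)$ injective and $b(T)h=a(T)h_{0}$; here $a\ne 0$ since $b(T)h\ne 0$. Commutativity of $H^{\infty}(T)$ yields
\[
(af)(T)h_{0}=f(T)a(T)h_{0}=f(T)b(T)h=b(T)f(T)h=0.
\]
By Lemma~\ref{lem:separating-for-commutant}(1), $h_{0}$ is separating for $H^{\infty}(T)\subset H^{\infty}(T)'$, so $(af)(T)=0$; since $H^{\infty}$ is an integral domain, $af\ne 0$, exhibiting $T$ as class~$C_{0}$. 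Write $m$ for its minimal function.

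It remains to prove $T\sim S(m)$, and the plan is to show that $h_{0}$ is in fact an ordinary cyclic vector for $T$; the standard structural theorem for class~$C_{0}$ contractions (cf.~\cite{Berco}) will then immediately deliver $T\sim S(m)$. Set $\mathcal{K}=\overline{H^{\infty}(T)h_{0}}$ and argue by contradiction from a nonzero $h''\in\mathcal{K}^{\perp}$. Applying Definition~\ref{def:rat-cyc-vect} to $h''$ produces $a,b\in H^{\infty}$ with $b(T)$ injective and $b(T)h''=a(T)h_{0}\in\mathcal{K}$. Compressing to $\mathcal{K}^{\perp}$, the quotient contraction $T''=P_{\mathcal{K}^{\perp}}T|\mathcal{K}^{\perp}$ is completely nonunitary and satisfies $u(T'')=P_{\mathcal{K}^{\perp}}u(T)|\mathcal{K}^{\perp}$ for every $u\in H^{\infty}$, so $m(T'')=0$, making $T''$ a class~$C_{0}$ contraction with $m_{T''}$ dividing $m$. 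The decisive input from \cite{Berco} is that for a class~$C_{0}$ operator, $b(T)$ is injective iff $b\wedge m=1$; hence also $b\wedge m_{T''}=1$, and so $b(T'')$ is injective. But $b(T'')h''=P_{\mathcal{K}^{\perp}}b(T)h''=0$, contradicting $h''\ne 0$.

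The main obstacle is the appeal to the structure theory of class~$C_{0}$ contractions in the last step: the equivalence of injectivity of $b(T)$ with coprimality $b\wedge m_{T}=1$, and the implication that a $C_{0}$ contraction with a cyclic vector and minimal function $m$ is quasisimilar to $S(m)$. Both are standard results from \cite{Berco}, and once they are cited, the compression argument above closes the proof.
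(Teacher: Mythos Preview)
Your proof is correct and follows essentially the same route as the paper's: for Part~(1) the paper also first establishes $(fu_{1})(T)=0$ from a nonzero vector in $\ker f(T)$, then shows $h_{0}$ is cyclic by proving the compression of $T$ to the orthogonal complement of its cyclic subspace is injectively annihilated (via the $C_{0}$ criterion $b(T)$ injective $\Leftrightarrow b\wedge m=1$), and finally invokes \cite{complements2}/\cite[Theorem III.2.3]{Berco} for $T\sim S(m)$. The only cosmetic difference is that you obtain $(af)(T)=0$ by invoking Lemma~\ref{lem:separating-for-commutant}(1), whereas the paper verifies $(fu_{1})(T)h=0$ for every $h$ by a direct computation that in effect reproves that lemma.
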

\begin{proof}
Part (2) follows immediately from Proposition \ref{pro:confluent=00003Drsc+noKer}.
To verify (1), assume that $f\in H^{\infty}\setminus\{0\}$, $\ker f(T)\ne\{0\}$,
and $H^{\infty}(T)$ has a rationally strictly cyclic vector $h_{0}\in\mathcal{H}$.
Choose a nonzero vector $h_{1}\in\ker f(T)$, and functions $u_{1},v_{1}\in H^{\infty}$
such that $v_{1}(T)$ is injective and $v_{1}(T)h_{1}=u_{1}(T)h_{0}$.
The function $u_{1}$ is not zero since $v_{1}(T)h_{1}\ne0$. We claim
that $f(T)u_{1}(T)=0$. Indeed, let $h$ be an arbitrary vector in
$\mathcal{H}$. Choose $u,v\in H^{\infty}$ such that $v(T)$ is injective
and $v(T)h=u(T)h_{0}$. We have then\begin{eqnarray*}
v(T)[f(T)u_{1}(T)h] & = & f(T)u_{1}(T)[v(T)h]=f(T)u_{1}(T)u(T)h_{0}\\
 & = & f(T)u(T)u_{1}(T)h_{0}=f(T)u(T)v_{1}(T)h_{1}=0,\end{eqnarray*}
and therefore $f(T)u_{1}(T)h=0$. Thus $T$ is of class $C_{0}$ because
$(fu_{1})(T)=0$ and $fu_{1}\in H^{\infty}\setminus\{0\}$.

Finally, let $m$ be the minimal function of $T$, denote by $\mathcal{M}$
the cyclic space for $T$ generated by $h_{0}$, and set $\mathcal{N}=\mathcal{M}^{\perp}$.
Let $T'=P_{\mathcal{N}}T|\mathcal{N}$ be the compression of $T$
to $\mathcal{N}$. We have proved $m(T')=0$. Let now $h\in\mathcal{H}$
be a vector, and pick $u,v\in H^{\infty}$ such that $v(T)$ is injective
and $v(T)h=u(T)h_{0}$. In particular, we have $v(T')h=0$. The injectivity
of $v(T)$ is equivalent to the condition $v\wedge m=1$, and this
implies that $v(T')$ is injective as well, so that $h=0$ We proved
therefore that $\mathcal{M}=\mathcal{H}$. In other words, $T$ has
a cyclic vector, and thus $T\sim S(m)$ by the results of \cite{complements2}
(see also \cite[Theorem III.2.3]{Berco}).
\end{proof}
We conclude this section with a result about arbitrary operators of
class $C_{0}$.
\begin{prop}
\label{pro:C0-commutant-is-CP}For any operator $T$ of class $C_{0}$,
the commutant $\{T\}'$ has the closability property.\end{prop}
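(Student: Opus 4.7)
The plan is to reduce to the Jordan model of $T$ and then to a manageable block-diagonal subalgebra of its commutant. First I would invoke the Jordan model theorem for operators of class $C_{0}$ (see \cite[Chapter III]{Berco}): there exist inner functions $m_{1},m_{2},\dots$ with $m_{i+1}\mid m_{i}$ such that $T$ is quasisimilar to the Jordan operator $J=\bigoplus_{i}S(m_{i})$.

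The next step is the standard identity $\{T\}'=H^{\infty}(T)'$, which holds for any completely nonunitary contraction $T$. The inclusion $H^{\infty}(T)'\subset\{T\}'$ is immediate from $T\in H^{\infty}(T)$, and the reverse inclusion uses that any operator commuting with $T$ commutes with every polynomial in $T$ and hence, via the Sz.-Nagy--Foias functional calculus, with every element of $H^{\infty}(T)$. Applying this to both $T$ and $J$, and combining with $H^{\infty}(T)\sim H^{\infty}(J)$ from Lemma \ref{lem:quasisim-for-cnu-and-for-hinfty}(2) together with the commutativity of these algebras, Proposition \ref{pro:facts-about-quasisimilarity}(7) reduces the task to showing that $\{J\}'$ has the closability property.

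For this final step I would consider the block-diagonal algebra $\mathcal{D}=\bigoplus_{i}\mathcal{W}_{S(m_{i})}$, whose elements manifestly commute with $J$, so that $\mathcal{D}\subset\{J\}'$. By Proposition \ref{pro:SandS(m)haveCP} each summand $\mathcal{W}_{S(m_{i})}$ has the closability property, so Lemma \ref{lem:CP-for-direct-sums} yields the same for $\mathcal{D}$, and then Lemma \ref{lem:subalg-and-commutant} propagates the property upward to the larger algebra $\{J\}'$. The only substantive input beyond the excerpt is the Jordan model theorem; once that is in hand, everything else is an easy chaining of the quasisimilarity invariance and the direct-sum reduction already established, so I do not anticipate any real obstacle.
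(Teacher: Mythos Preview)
Your argument is correct and follows essentially the same route as the paper's own proof: reduce via the Jordan model and Proposition~\ref{pro:facts-about-quasisimilarity}(7) to $\{J\}'$, then observe that $\{J\}'$ contains the block-diagonal algebra $\bigoplus_{i}\mathcal{W}_{S(m_{i})}$, which has the closability property by Lemma~\ref{lem:CP-for-direct-sums} and Proposition~\ref{pro:SandS(m)haveCP}. The only cosmetic difference is that the paper writes the diagonal summands as $\{S(m_{i})\}'$ and then invokes Sarason's identity $\{S(m)\}'=\mathcal{W}_{S(m)}$, whereas you go straight to $\mathcal{W}_{S(m_{i})}$.
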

\begin{proof}
The operator $T$ is quasisimilar to an operator of the form $T'=\bigoplus_{i\in I}S(m_{i})$,
where each $m_{i}$ is an inner function; see \cite[Theorem III.5.1]{Berco}.
By Proposition \ref{pro:facts-about-quasisimilarity}(7), it suffices
to show that $\{T'\}'$ has the closability property. Now, $\{T'\}'\supset\bigoplus_{i\in I}\{S(m_{i})\}'$,
and Lemma \ref{lem:CP-for-direct-sums} shows that it suffices to
show that $\{S(m)\}'$ has the closability property for each inner
function $m$. This follows from Proposition \ref{pro:SandS(m)haveCP}
because $\{S(m)\}'=\mathcal{W}_{S(m)}$.
\end{proof}

\section{Confluent Algebras of the Form $H^{\infty}(T)$\label{sec:Confluent-Algebras-of}}

Consider a completely nonunitary contraction $T\in\mathcal{L}(\mathcal{H})$
such that $H^{\infty}(T)$ has a rationally strictly cyclic vector.
According to Proposition \ref{pro:strictly-cyclic-and-C0}, we have
$T\sim S(m)$ if any nonzero operator in $H^{\infty}(T)$ has nonzero
kernel. Therefore we will restrict ourselves now to operators $T$
such that $f(T)$ is injective for every nonzero element of $H^{\infty}$.
In other words, we will assume that $H^{\infty}(T)$ is a confluent
algebra (cf. Proposition \ref{pro:confluent=00003Drsc+noKer}) and
$\dim\mathcal{H}>1$. In this case, the space $\mathcal{H}$ can be
identified with a space of meromorphic functions. Let us denote by
$N$ the Nevanlinna class consisting of those meromorphic functions
in $\mathbb{D}$ which can be written as $u/v$, with $u,v\in H^{\infty}$.
\begin{lem}
\label{lem:u/v=00003Dh/h0}Assume that $T$ is a completely nonunitary
contraction such that $H^{\infty}(T)$ is confluent. Let $h,h_{0}$
be two vectors such that $h_{0}\ne0$, and choose $u,v\in H^{\infty}$,
$v\ne0$, such that $v(T)h=u(T)h_{0}$. The function $u/v\in\mathcal{M}(\mathbb{D})$
is uniquely determined by $h$ and $h_{0}$. We have $u/v=0$ if and
only if $h=0$.\end{lem}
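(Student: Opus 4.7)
The proof proposal is to exploit two consequences of confluence for $H^\infty(T)$ packaged in Proposition \ref{pro:confluent=00003Drsc+noKer}: namely, that $f(T)$ is injective for every nonzero $f\in H^\infty$, and that every nonzero vector in $\mathcal{H}$ is rationally strictly cyclic. (The latter guarantees that pairs $u,v$ as in the statement do exist, taking $h_0$ as the rationally strictly cyclic vector and appealing to Definition \ref{def:rat-cyc-vect}; note that $v\ne 0$ automatically because $v(T)$ must be injective and $\dim\mathcal{H}\ge 1$.)

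For uniqueness of $u/v$, the plan is to pick a second representation $v_1(T)h=u_1(T)h_0$ with $v_1\in H^\infty\setminus\{0\}$ and show $v_1 u = v u_1$ in $H^\infty$. Multiply the first relation by $v_1(T)$ and the second by $v(T)$; since $H^\infty(T)$ is commutative (as a homomorphic image of $H^\infty$), the two left-hand sides agree, yielding
\[
(v_1 u - v u_1)(T)\, h_0 = 0.
\]
If $v_1 u - v u_1 \not\equiv 0$ in $H^\infty$, then by confluence $(v_1 u - v u_1)(T)$ would be injective, forcing $h_0=0$, a contradiction. Hence $v_1 u = v u_1$, which is exactly the statement that $u/v$ and $u_1/v_1$ coincide as elements of $N\subset\mathcal{M}(\mathbb{D})$.

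For the characterization of when $u/v=0$, one direction is immediate: if $h=0$, then $u(T)h_0 = v(T)h = 0$, and since $h_0\ne 0$, confluence forces $u\equiv 0$, so $u/v=0$. Conversely, if $u/v=0$ as a meromorphic function, then $u\equiv 0$ in $H^\infty$, so $v(T)h = u(T)h_0 = 0$; because $v\ne 0$, the operator $v(T)$ is injective, whence $h=0$.

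I do not anticipate a real obstacle: the entire argument is a direct application of commutativity of $H^\infty(T)$ together with the injectivity of every nonzero $f(T)$, which is precisely the content of Proposition \ref{pro:confluent=00003Drsc+noKer}(2)$\Rightarrow$(1). The only point that deserves a line of explanation is why the quotient $u/v$ is well-defined in $\mathcal{M}(\mathbb{D})$ independently of a specific choice of representatives in $H^\infty$, but this is immediate once the identity $v_1 u = v u_1$ has been established in $H^\infty$.
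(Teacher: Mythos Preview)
Your proof is correct and follows essentially the same route as the paper: take a second representation $v_1(T)h=u_1(T)h_0$, use commutativity of $H^\infty(T)$ to obtain $(v_1u-vu_1)(T)h_0=0$, and invoke the injectivity of nonzero elements of $H^\infty(T)$ (from confluence) to conclude $v_1u=vu_1$. You additionally spell out the ``$u/v=0$ iff $h=0$'' equivalence, which the paper states but leaves to the reader; your argument for it is fine.
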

\begin{proof}
Choose another pair of functions $u_{1},v_{1}\in H^{\infty}$, $v_{1}\ne0$,
satisfying $v_{1}(T)h=u_{1}(T)h_{0}$. We have \[
(v_{1}(T)u(T)-v(T)u_{1}(T))h_{0}=(v_{1}(T)v(T)-v(T)v_{1}(T))h=0,\]
and therefore $h_{0}\in\ker(v_{1}u-uv_{1})(T)$. The hypothesis implies
that $v_{1}u=vu_{1}$ and hence $u/v=u_{1}/v_{1}$.
\end{proof}
The function $u/v$ will be denoted $h/h_{0}$. It is clear that the
map $h\mapsto h/h_{0}$ is an injective linear map from $\mathcal{H}$
to $N$, and $u(T)h/u(T)h_{0}=h/h_{0}$ if $u\in H^{\infty}\setminus\{0\}$.
We also have\[
\frac{h}{h_{0}}=\frac{h}{h_{1}}\cdot\frac{h_{1}}{h_{0}}\]
provided that $h_{0},h_{1}\in\mathcal{H}\setminus\{0\}$. Now let
$h,h_{0}\in\mathcal{H}\setminus\{0\}$. There exists a unique integer
$n$ such that the nonzero function $h/h_{0}$ can be written as\[
\frac{h}{h_{0}}(z)=z^{n}\frac{u(z)}{v(z)}\]
with $u,v\in H^{\infty}$ and $u(0)\ne0\ne v(0)$. The number $n$
will be denoted ${\rm ord}_{0}(h/h_{0})$. It will be convenient to
write ${\rm ord}_{0}(h/h_{0})=\infty$ if $h=0$.
\begin{lem}
\label{pro:H=00003DD_n}Let $T$ be a completely nonunitary contraction
such that $H^{\infty}(T)$ is confluent. Then $0\ge\inf\{{\rm ord}_{0}(h/h_{0}):h\in\mathcal{H}\}>-\infty$
for every $h_{0}\in\mathcal{H}\setminus\{0\}$.\end{lem}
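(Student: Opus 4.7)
The inequality $\inf \le 0$ is immediate, since $h_0/h_0 = 1$ has $\mathrm{ord}_0 = 0$. For the bound $\inf > -\infty$, my plan is to argue by contradiction: suppose there exist $h_n \in \mathcal{H}$ with $\mathrm{ord}_0(h_n/h_0) = -k_n$ and $k_n \to \infty$. Writing the ratio $h_n/h_0 \in N$ in canonical form at the origin, we can find $\tilde u_n,\tilde v_n\in H^\infty$ with $\tilde u_n(0),\tilde v_n(0)\ne 0$ and $h_n/h_0 = z^{-k_n}\tilde u_n/\tilde v_n$. Unpacking the definition of the ratio yields $T^{k_n}\tilde v_n(T)h_n = \tilde u_n(T)h_0$, so in particular
\[
\tilde u_n(T)h_0 \in T^{k_n}\mathcal{H},\qquad \tilde u_n(0)\ne 0.
\]

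The key step of the argument is the following inductive sublemma: \emph{if $u\in H^\infty$ with $u(0)\ne 0$ and $u(T)h_0 \in T^k\mathcal{H}$, then $h_0\in T^k\mathcal{H}$.} For the base case $k=1$, normalize $u(0)=1$, write $u(z)=1+zw(z)$ so that $u(T) = I + Tw(T)$; if $u(T)h_0 = Tg$, then $h_0 = T(g - w(T)h_0)\in T\mathcal{H}$. For the inductive step, given $u(T)h_0\in T^{k+1}\mathcal{H}\subset T^k\mathcal{H}$, the induction hypothesis furnishes $g$ with $h_0 = T^k g$. Then $T^k u(T)g = u(T)h_0\in T^{k+1}\mathcal{H}$, and the injectivity of $T^k$ (guaranteed by confluence, since $z^k\in H^\infty$ is nonzero) gives $u(T)g\in T\mathcal{H}$. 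Applying the base case with $h_0$ replaced by $g$ yields $g\in T\mathcal{H}$, hence $h_0\in T^{k+1}\mathcal{H}$.

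Applying this sublemma to each $\tilde u_n$ gives $h_0\in T^{k_n}\mathcal{H}$ for every $n$. Since the subspaces $T^k\mathcal{H}$ are decreasing in $k$ and $k_n\to\infty$, we deduce $h_0\in\mathcal{M}:=\bigcap_{k\ge 0}T^k\mathcal{H}$. To obtain a contradiction it suffices to establish $\mathcal{M}=\{0\}$ whenever $T$ is a cnu contraction with $H^\infty(T)$ confluent.

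The triviality of $\mathcal{M}$ is the main obstacle, and is where I expect real work to be required. The setup I would exploit: $\mathcal{M}$ is $H^\infty(T)$-invariant, and since every $u(T)$ with $u\ne 0$ is injective, the restriction $T|_{\mathcal{M}}$ is a bijection of $\mathcal{M}$ onto itself, so the linear transformation $X=(T|_{\mathcal{M}})^{-1}$ has dense-in-$\mathcal{M}$ domain and commutes with the confluent algebra $H^\infty(T)$. When $\mathcal{M}$ is dense in $\mathcal{H}$, Theorem~\ref{thm:abelian+RSCimplies-CP} and Proposition~\ref{pro:commutant-is-quotient-ring} give $X\subset B^{-1}A$ for $A,B\in H^\infty(T)$ with $\ker B=\{0\}$, and unwinding this forces the relation $b(z) = z a(z)$ in $H^\infty$ between the symbols of $B$ and $A$; this in turn identifies $T^{-1}$ with a quotient of elements of $H^\infty(T)$, from which one should derive that $T|_{\overline{\mathcal{M}}}$ is an invertible isometry, contradicting the complete non-unitarity of $T$. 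When $\mathcal{M}$ is not dense, one reduces to the restriction $T|_{\overline{\mathcal{M}}}$, which inherits confluence of its $H^\infty$ algebra, and runs the same argument in this smaller space. The delicate point—pushing the algebraic quotient relation to the conclusion that $T|_{\overline{\mathcal{M}}}$ is unitary rather than merely invertible—is where I expect the proof to require care, likely via the functional calculus on $\overline{\mathcal{M}}$ and the hypothesis that $T$ has no unitary invariant subspace.
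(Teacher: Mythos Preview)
Your sublemma is correct, and with it you correctly show that $\inf_h\mathrm{ord}_0(h/h_0)=-\infty$ forces $h_0\in\mathcal{M}:=\bigcap_{k\ge0}T^k\mathcal{H}$. The trouble is that this is not a reduction but a reformulation: the converse is immediate, since $h_0=T^kg_k$ gives $g_k/h_0=z^{-k}$ and hence $\mathrm{ord}_0(g_k/h_0)=-k$. Thus the assertion ``$\inf>-\infty$ for every nonzero $h_0$'' is \emph{equivalent} to $\mathcal{M}=\{0\}$, and everything rests on the step you yourself flag as ``the main obstacle.''

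That step does not go through as sketched. Applying Proposition~\ref{pro:commutant-is-quotient-ring} to $X=(T|_\mathcal{M})^{-1}$ when $\mathcal{M}$ is dense yields $X\subset b(T)^{-1}a(T)$ with, as you say, $b=za$; but then $b(T)^{-1}a(T)$ is exactly $T^{-1}$ on its natural domain $T\mathcal{H}$, so the inclusion obtained is the tautology $(T|_\mathcal{M})^{-1}\subset T^{-1}$ and carries no information. Nothing here forces $T|_{\overline{\mathcal{M}}}$ to be isometric, let alone unitary. More fundamentally, at this stage of the development nothing is yet known about the range of $T$: the fact that $T\mathcal{H}$ has codimension one (Lemma~\ref{lem:0-in-spectrum-ofT}) and the fact that $\bigcap_kT^k\mathcal{H}=\{0\}$ (Proposition~\ref{propo-former-lem:bi-orthogonal}(2)) both \emph{depend} on the present lemma, so any appeal to them is circular.

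The paper sidesteps the circle with a Baire category argument. One writes $\mathcal{H}\setminus\{0\}$ as the countable union of the sets
\[
\mathcal{D}_{n,k}=\Bigl\{h:\ \tfrac{h}{h_0}=z^{-n}\tfrac{u}{v},\ \|u\|_\infty,\|v\|_\infty\le k,\ |u(0)|,|v(0)|\ge1\Bigr\},
\]
shows each $\mathcal{D}_{n,k}$ is closed via Vitali--Montel together with weak continuity of the functional calculus, and concludes that some $\mathcal{D}_{n,k}$ has nonempty interior; since $\mathcal{D}_{n,k}$ sits inside the linear manifold $\{h:\mathrm{ord}_0(h/h_0)\ge-n\}$, that manifold must be all of $\mathcal{H}$.
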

\begin{proof}
Clearly ${\rm ord}_{0}(h_{0}/h_{0})=0$. The sets\[
\mathcal{H}_{n}=\{h\in\mathcal{H}:{\rm ord}_{0}(h/h_{0})\ge-n\},\quad n=0,1,\dots,\]
are linear manifolds such that $\bigcup_{n\ge0}\mathcal{H}_{n}=\mathcal{H}$.
Let now $\mathcal{D}_{n,k}$, $k\ge1$, denote the set of all vectors
$h\in\mathcal{H}_{n}$ for which $h/h_{0}$ can be written as \[
\frac{h}{h_{0}}(z)=z^{-n}\frac{u(z)}{v(z)}\]
 with $\|u\|_{\infty},\|v\|_{\infty}\le k$ and $|u(0)|,|v(0)|\ge1$.
Observe that \[
\bigcup_{m\le n,k\ge1}\mathcal{D}_{m,k}=\mathcal{H}_{n}\setminus\{0\}.\]
The proposition will follow if we can show that one of the sets $\mathcal{D}_{n,k}$
has an interior point, and this will follow from the Baire category
theorem once we  prove that each $\mathcal{D}_{n,k}$ is closed. Assume
indeed that $h_{i}\in\mathcal{D}_{n,k}$ is a sequence such that $h_{i}\to h$
as $i\to\infty$. For each $i$ write\[
\frac{h_{i}}{h_{0}}(z)=z^{-n}\frac{u_{i}}{v_{i}}\]
 with $\|u_{i}\|_{\infty},\|v_{i}\|_{\infty}\le k$ and $|u_{i}(0)|,|v_{i}(0)|\ge1$.
By the Vitali-Montel theorem we can assume, after dropping to a subsequence,
that there exist functions $u,v\in H^{\infty}$ such that $u_{i}(z)\to u(z)$
and $v_{i}(z)\to v(z)$ uniformly for $z$ in a compact subset of
$\mathbb{D}$. Clearly $\|u\|_{\infty},\|v\|_{\infty}\le k$ and $|u(0)|,|v(0)|\ge1$.
Moreover, we have $u_{i}(T)h_{0}\to u(T)h_{0}$ and $v_{i}(T)h_{i}\to v(T)h$
in the weak topology. (For the second sequence we need to write\[
v_{i}(T)h_{i}-v(T)h=v_{i}(T)(h_{i}-h)+(v_{i}(T)-v(T))h,\]
and use the fact that the first term tends to zero in norm, while
the second tends to zero weakly by \cite[Lemmas II.1.6 and II.1.7]{Sz-N-F}.)
The identities $T^{n}v_{i}(T)h_{i}=u_{i}(T)h_{0}$ therefore imply
$T^{n}v(T)h=u(T)h_{0}$ so that $h/h_{0}=z^{-n}u/v$, and thus $h\in\mathcal{D}_{n,k}$,
as desired.\end{proof}
\begin{lem}
\label{lem:0-in-spectrum-ofT}Let $T$ be a completely nonunitary
contraction such that $H^{\infty}(T)$ is confluent. Then $T$ is
injective and $T\mathcal{H}$ is a closed subspace of codimension
$1$. Thus $T$ is a Fredholm operator with ${\rm index}(T)=-1$.\end{lem}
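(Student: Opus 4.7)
The plan is to verify injectivity first, then bound $\dim(\mathcal{H}/T\mathcal{H})$ above and below, and finally deduce closedness from a general Banach-space argument.

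Injectivity is immediate: since $z \in H^\infty$ is nonzero, Proposition \ref{pro:confluent=00003Drsc+noKer}(1) applied to the confluent algebra $H^\infty(T)$ yields $\ker T = \ker z(T) = \{0\}$. Under our confluence assumption the functional calculus $u \mapsto u(T)$ is injective as well, so each nonzero $u \in H^\infty$ produces an injective operator $u(T)$.

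To show $\dim(\mathcal{H}/T\mathcal{H}) \le 1$, I would take two nonzero $h, k \in \mathcal{H}$ and invoke Definition \ref{def:confluence} to obtain nonzero $u, v \in H^\infty$ with $u(T) h = v(T) k$. Factor $u(z) = z^a u_1(z)$ and $v(z) = z^b v_1(z)$ with $u_1(0), v_1(0) \ne 0$, and assume $a \le b$. Cancelling the injective operator $T^a$ in $T^a u_1(T) h = T^b v_1(T) k$ produces $u_1(T) h = T^{b-a} v_1(T) k$. Writing the Taylor expansions $u_1(T) = u_1(0) I + T \tilde u(T)$ and $v_1(T) = v_1(0) I + T \tilde v(T)$, one reads off: if $b > a$, the right side lies in $T\mathcal{H}$ and hence $u_1(0) h \in T\mathcal{H}$, forcing $h \in T\mathcal{H}$; if $b = a$, the identity becomes $u_1(0) h - v_1(0) k \in T\mathcal{H}$. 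Either way $h$ and $k$ are linearly dependent modulo $T\mathcal{H}$, so the quotient has dimension at most one.

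For $T\mathcal{H} \ne \mathcal{H}$ I would argue by contradiction via Lemma \ref{pro:H=00003DD_n}. The definition of the quotient notation $h/h_0$ directly gives $Tk/h_0 = z \cdot (k/h_0)$ for every $k \in \mathcal{H}$ and $h_0 \in \mathcal{H} \setminus \{0\}$. If $T$ were surjective, fixing some $h_0 \ne 0$ and recursively solving $Th_n = h_{n-1}$ would yield $h_n/h_0 = 1/z^n$, so $\mathrm{ord}_0(h_n/h_0) = -n$, contradicting the uniform lower bound of Lemma \ref{pro:H=00003DD_n}. Therefore $T\mathcal{H}$ has algebraic codimension exactly one.

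Closedness is then a standard consequence of finite codimension: pick $y_0 \in \mathcal{H} \setminus T\mathcal{H}$ and define $S \colon \mathcal{H} \oplus \mathbb{C} \to \mathcal{H}$ by $S(x, \lambda) = Tx + \lambda y_0$. This map is a continuous linear bijection between Banach spaces (injective since $\ker T = \{0\}$ and $y_0 \notin T\mathcal{H}$, surjective by the codimension bound), hence an isomorphism by the open mapping theorem, so $T\mathcal{H} = S(\mathcal{H} \oplus \{0\})$ is closed. Consequently $T$ is Fredholm with ${\rm index}(T) = \dim \ker T - \dim(\mathcal{H}/T\mathcal{H}) = 0 - 1 = -1$. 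The main subtlety lies in the codimension bound, where the Taylor expansions of $u_1, v_1$ combined with the injectivity of $T^a$ are precisely what convert the confluence relation $u(T) h = v(T) k$ into a linear dependence modulo $T\mathcal{H}$.
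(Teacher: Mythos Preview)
Your proof is correct and follows essentially the same approach as the paper. Both arguments use Lemma~\ref{pro:H=00003DD_n} (the finiteness of $\inf\operatorname{ord}_0(h/h_0)$) to rule out surjectivity, and both extract the codimension bound from the same Taylor-expansion trick (factor out powers of $z$ and read off a relation modulo $T\mathcal{H}$); the only difference is organizational---the paper first shows $T\mathcal{H}\ne\mathcal{H}$, then picks $h_0\notin T\mathcal{H}$ and packages the codimension argument as $\ker\Phi\subset T\mathcal{H}$ for the linear functional $\Phi(h)=(h/h_0)(0)$, whereas you bound the codimension directly before touching Lemma~\ref{pro:H=00003DD_n}.
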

\begin{proof}
The operator $T$ belongs to a confluent algebra, hence it is injective.
Note next that \[
{\rm ord}_{0}(Th/h_{0})={\rm ord}_{0}(h/h_{0})+1\]
and hence\[
\inf\{{\rm ord}_{0}(h/h_{0}):h\in\mathcal{H}\}+1=\inf\{{\rm ord}_{0}(h/h_{0}):h\in T\mathcal{H}\}.\]
Since these numbers are finite, we cannot have $T\mathcal{H}=\mathcal{H}$.
To conclude the proof, it will suffice to show that $T\mathcal{H}$
has codimension one since this implies that it is closed as well.
Choose $h_{0}\in\mathcal{H}\setminus T\mathcal{H}$, and note that
$\mbox{{\rm ord}}_{0}(h/h_{0})\ge0$ for every $h$. Indeed, ${\rm ord}_{0}(h/h_{0})=-n<0$
implies an identity of the form \[
T^{n}v(T)h=u(T)h_{0}\]
with $u(0)\ne0$.Factoring $u(z)-u(0)=zw(z)$, we obtain\[
h_{0}=\frac{1}{u(0)}T(T^{n-1}v(T)h-w(T)h_{0})\in T\mathcal{H},\]
a contradiction. Thus the function $h/h_{0}$ is analytic at 0, and
we can therefore define a linear functional $\Phi:\mathcal{H}\to\mathbb{C}$
by setting $\Phi h=(h/h_{0})(0)$. We will show that $\ker\Phi\subset T\mathcal{H}$.
Indeed, $h\in\ker\Phi$ implies that $v(T)h=Tu(T)h_{0}$ for some
$u,v\in H^{\infty}$ with $v(0)\ne0$. Factoring again $v(z)-v(0)=zw(z),$we
obtain\[
h=\frac{1}{v(0)}T(u(T)h_{0}-w(T)h)\in T\mathcal{H},\]
as claimed. Thus $T\mathcal{H}$ has codimension 1, and the lemma
is proved.
\end{proof}
The preceding results allow us to describe completely the spectral
picture of $T$, as well as its commutant. The argument for (3) already
appears in \cite{C-D}, and is included for the reader's convenience.
\begin{thm}
\label{thm:spectral-picture-for-confluence}Let $T\in\mathcal{L}(\mathcal{H})$
be a completely nonunitary contraction such that $H^{\infty}(T)$
is confluent.
\begin{enumerate}
\item We have $\sigma(T)=\overline{\mathbb{D}}$ and $\sigma_{{\rm e}}(T)=\mathbb{T}.$
\item For each $\lambda\in\mathbb{D}$, $\lambda I-T$ is injective and
has closed range of codimension $1$.
\item $\bigvee\{\ker(\lambda I-T^{*}):\lambda\in\mathbb{D}\}=\mathcal{H}.$
More generally, $\bigvee\{\ker(\lambda I-T^{*}):\lambda\in S\}=\mathcal{H}$
whenever the set $S\subset\mathbb{D}$ has an accumulation point in
$\mathbb{D}$.
\item For every nonzero invariant subspace $\mathcal{M}$ of $T$, there
exists an inner function $m\in H^{\infty}$ such that $\overline{m(T)\mathcal{H}}=\mathcal{M}$
and the compression $T_{\mathcal{M}^{\perp}}$ is quasisimilar to
$S(m)$. Conversely, for every inner function $m$, the minimal function
of $T_{(m(T)\mathcal{H})^{\perp}}$ is $m$.
\item $\{T\}'=H^{\infty}(T)$.
\item The operator $T$ is of class $C_{10}$. Thus, the powers $T^{*n}$
converge strongly to zero and $\lim_{n\to\infty}\|T^{n}h\|\ne0$ for
$h\in\mathcal{H}\setminus\{0\}$.
\end{enumerate}
In particular, properties $(2)$ and $(3)$ say that $T^{*}$ belongs
to the class $\mathcal{B}_{1}(\mathbb{D})$ defined in \cite{C-D}.

\end{thm}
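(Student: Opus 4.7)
The plan is to prove the six assertions in essentially the given order, building on Lemmas \ref{pro:H=00003DD_n} and \ref{lem:0-in-spectrum-ofT} and the $h/h_{0}$ machinery.

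\textbf{Parts (1) and (2).} These follow by localizing the proof of Lemma \ref{lem:0-in-spectrum-ofT} at an arbitrary $\lambda\in\mathbb{D}$. The operator $\lambda I-T$ lies in $H^{\infty}(T)$, so confluence gives injectivity. To obtain closed range of codimension one, I would repeat the arguments of Lemmas \ref{pro:H=00003DD_n} and \ref{lem:0-in-spectrum-ofT} with $\operatorname{ord}_{0}(h/h_{0})$ replaced by the order of vanishing $\operatorname{ord}_{\lambda}(h/h_{0})$ at $\lambda$ and $T^{n}$ replaced by $(T-\lambda I)^{n}$; the Vitali--Montel and factor-theorem steps transfer verbatim. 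Thus $\lambda I-T$ is Fredholm of index $-1$ for each $\lambda\in\mathbb{D}$, proving (2). Part (1) follows: $\sigma(T)\subseteq\overline{\mathbb{D}}$ since $T$ is a contraction; $\mathbb{D}\subseteq\sigma(T)$ by (2), so $\sigma(T)=\overline{\mathbb{D}}$; and since the Fredholm index is locally constant on the semi-Fredholm domain but jumps from $-1$ on $\mathbb{D}$ to $0$ on $\mathbb{C}\setminus\overline{\mathbb{D}}$, every point of $\mathbb{T}$ must lie in $\sigma_{e}(T)$, giving $\sigma_{e}(T)=\mathbb{T}$.

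\textbf{Part (3).} For $e\in\ker(T^{*}-\lambda)$ and $f\in H^{\infty}$, one has $f(T)^{*}e=\overline{f(\bar{\lambda})}e$; hence $\langle f(T)h,e\rangle=f(\bar{\lambda})\langle h,e\rangle$. If $v(T)h=u(T)h_{0}$ then
\[
v(\bar{\lambda})\,\langle h,e\rangle \;=\; u(\bar{\lambda})\,\langle h_{0},e\rangle,
\]
so whenever $\langle h_{0},e\rangle\neq 0$ the value $(h/h_{0})(\bar{\lambda})$ equals $\langle h,e\rangle/\langle h_{0},e\rangle$. Assuming $h\perp\ker(\lambda I-T^{*})$ for every $\lambda$ in a set $S$ with accumulation point in $\mathbb{D}$, the meromorphic function $h/h_{0}$ vanishes at $\bar{\lambda}$ for those $\lambda\in S$ at which the denominator is nonzero. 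The key technical step is to verify that this ``good'' set still has an accumulation point: choose $h_{0}$ rationally strictly cyclic, and show that $\lambda\mapsto\langle h_{0},e_{\lambda}\rangle$, after an analytic local selection of $e_{\lambda}$ (available since $E_{\lambda}$ is one-dimensional and varies holomorphically by the Fredholm structure from (2)), is a nonzero analytic function of $\bar{\lambda}$ and therefore has only isolated zeros. The vanishing of $h/h_{0}$ on a set with accumulation then forces $h/h_{0}\equiv 0$, hence $h=0$.

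\textbf{Parts (4)--(6).} For (4), given a nonzero $T$-invariant subspace $\mathcal{M}$, the Sz.-Nagy--Foias theory of invariant subspaces, combined with the fact that (2) and (3) place $T^{*}$ in the Cowen--Douglas class $\mathcal{B}_{1}(\mathbb{D})$ (so $\Theta_{T}$ is essentially scalar-valued and inner), yields an inner function $m$ with $\overline{m(T)\mathcal{H}}=\mathcal{M}$; the compression $T_{\mathcal{M}^{\perp}}$ is quasisimilar to $S(m)$ by the class-$C_{0}$ quotient theory of \cite{Berco}. For (5), any $X\in\{T\}'$ has $X^{*}$ commuting with $T^{*}$ and hence preserving each one-dimensional $E_{\lambda}$, so $X^{*}e_{\lambda}=\overline{\phi(\lambda)}e_{\lambda}$ with $|\phi(\lambda)|\le\|X\|$; an analytic local choice of $e_{\lambda}$ shows $\phi\in H^{\infty}$, and by (3) the operator $X-\phi(T)$ vanishes on a dense set, hence is zero. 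Finally for (6), $T^{*n}e_{\lambda}=\lambda^{n}e_{\lambda}\to 0$ combined with (3) and $\|T^{*n}\|\le 1$ gives $T\in C_{\cdot 0}$; the subspace $\mathcal{K}=\{h:T^{n}h\to 0\}$ is closed and $T$-invariant, in fact $H^{\infty}(T)$-invariant since $\phi(T)T^{n}h=T^{n}\phi(T)h\to 0$, so by (4) it has the form $\overline{m(T)\mathcal{H}}$ for an inner $m$; a combination of (5) and the confluence of $H^{\infty}(T)$ (reminiscent of the argument in Proposition \ref{pro:strictly-cyclic-and-C0}) forces $m$ to be constant and $\mathcal{K}=\{0\}$, giving $T\in C_{1\cdot}$.

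\textbf{Main obstacle.} I expect part (3) to be the delicate step: controlling the zero set of the analytic denominator $\lambda\mapsto\langle h_{0},e_{\lambda}\rangle$ requires both a holomorphic local selection of $e_{\lambda}$ and a genuine argument that $h_{0}$ can be chosen so this function is not identically zero. Once (3) is in hand, (4)--(6) become essentially applications of the Nagy--Foias structure theory for operators whose adjoint lies in $\mathcal{B}_{1}(\mathbb{D})$, enhanced by the confluence hypothesis.
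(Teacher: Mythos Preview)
Your approach to (1), (2), (3), and (5) is essentially sound, though the paper streamlines (2) by applying Lemma~\ref{lem:0-in-spectrum-ofT} directly to the M\"obius transform $T_\lambda=(I-\bar\lambda T)^{-1}(T-\lambda I)$, which is completely nonunitary with $H^\infty(T_\lambda)=H^\infty(T)$, rather than reworking the $\operatorname{ord}_\lambda$ machinery from scratch. Your direct attack on (3) via $h/h_0$ is in fact more self-contained than the paper's route, and the ``main obstacle'' you flag dissolves easily: choosing $h_0\notin T\mathcal{H}$ gives $\langle h_0,e_0\rangle\ne 0$ because $(\ker T^*)^\perp=T\mathcal{H}$, so the anti-holomorphic function $\lambda\mapsto\langle h_0,e_\lambda\rangle$ is nonzero at the origin and hence has isolated zeros. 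The paper instead proves (4) \emph{before} (3) and then deduces (3) from (4): if the span were proper, its orthocomplement $\mathcal{M}$ would contain $m(T)\mathcal{H}$ for some inner $m$, forcing $m(\lambda)=0$ for all $\lambda\in S$.

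The genuine gaps are in (4) and (6). In (4), the claim that $T^*\in\mathcal{B}_1(\mathbb{D})$ makes $\Theta_T$ ``essentially scalar-valued'' is false: the operators $S(\Theta)$ of Section~\ref{sec:Confluence-and-Functional-mod} with $\Theta$ a $2\times 1$ inner column are already counterexamples, and in general the defect indices of $T$ need not even be finite. The paper's argument is hands-on: for $\mathcal{M}\ne\{0\}$ and $h_0\in\mathcal{M}\setminus\{0\}$, every relation $v(T)h=u(T)h_0$ gives $v(T')P_{\mathcal{M}^\perp}h=0$, so the compression $T'$ is locally $C_0$, hence $C_0$ by \cite{local-C_0}; a further confluence argument shows $T'$ is cyclic, whence $T'\sim S(m)$ and $\mathcal{M}=\overline{m(T)\mathcal{H}}$. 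You also do not address the converse half of (4), that the minimal function of $T_{(m(T)\mathcal{H})^\perp}$ is exactly $m$ rather than a proper divisor; in the paper this requires the dual-algebra machinery of \cite{bcp,B-F-P} (membership in the class $\mathbb{A}$) and is not a routine consequence of $C_0$ theory.

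In (6) your final sentence is incoherent: if $\mathcal{K}\ne\{0\}$ and $\mathcal{K}=\overline{m(T)\mathcal{H}}$ with $m$ inner, then $m$ constant means $\mathcal{K}=\mathcal{H}$, not $\{0\}$. The real task is to rule out $\mathcal{K}\ne\{0\}$, and nothing in your sketch does this; confluence and (4)--(5) alone give no obvious obstruction to $T$ being of class $C_{00}$. The paper again invokes dual algebras: if $\mathcal{K}\ne\{0\}$ then $T|\mathcal{K}$ is a $C_{00}$ contraction with spectrum $\overline{\mathbb{D}}$ and confluent functional calculus, hence by \cite{brown} and \cite[Theorem~6.6]{B-F-P} it lies in $\mathbb{A}_{\aleph_0}$, and then \cite[Corollary~5.5]{B-F-P} produces an invariant subspace $\mathcal{N}\subset\mathcal{K}$ with $\dim(\mathcal{N}\ominus\overline{T\mathcal{N}})=\infty$, contradicting the index computation in (2).
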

\begin{proof}
For $\lambda\in\mathbb{D}$, the operator $T_{\lambda}=(I-\overline{\lambda}T)^{-1}(T-\lambda I)$
is also a completely nununitary contraction, and $H^{\infty}(T_{\lambda})=H^{\infty}(T)$
is confluent. Thus Lemma \ref{lem:0-in-spectrum-ofT} implies immediately
(2). In turn, (1) follows from (2) since $T$ is a contraction.

Next we prove (4). Let $\mathcal{M}\ne\{0\}$ be invariant for $T$,
set $\mathcal{N}=\mathcal{M}^{\perp}$, and choose $h_{0}\in\mathcal{M}\setminus\{0\}$.
Denote by $T'=P_{\mathcal{N}}T|\mathcal{N}$ the compression of $T$
to $\mathcal{M}$. Given $h\in\mathcal{N},$ an equality of the form
$v(T)h=u(T)h_{0}$ implies $v(T)h\in\mathcal{M}$, and therefore $v(T')h=0$.
The fact that $h_{0}$ is rationally strictly cyclic implies that
$T'$ is locally of class $C_{0}$, and hence of class $C_{0}$ by
\cite{local-C_0} (see also \cite[Theorem II.3.6]{Berco}). Denote
by $m$  the minimal function of $T'$. We show next that $T'$ has
a cyclic vector, hence it is quasisimilar to $S(m)$. Assume to the
contrary that $T'$ does not have a cyclic vector, and let $\mathcal{N}_{1},\mathcal{N}_{2}$
be cyclic spaces for $T'$ generated by two nonzero vectors $h_{1},h_{2}$
such that $T'|\mathcal{N}_{1}\sim S(m)$ and $\mathcal{N}_{1}\cap\mathcal{N}_{2}=\{0\}$
(see \cite{complements2} or \cite[Theorem III.2.13]{Berco}). There
exist nonzero functions $u_{1},u_{2}\in H^{\infty}$ such that $u_{1}(T)h_{1}=u_{2}(T)h_{2}$.
Dividing these functions by their greatest common inner divisor, we
may assume that $u_{1}$ and $u_{2}$ do not have any common inner
factor. We also have $u_{1}(T')h_{1}=u_{2}(T')h_{2}\in\mathcal{N}_{1}\cap\mathcal{N}_{2}$,
hence these vectors are equal to zero. We deduce that $m$ divides
$u_{1}$, and hence $m\wedge u_{2}=1$. This last equality implies
that $u_{2}(T')$ is a quasiaffinity, hence $u_{2}(T')h_{2}\ne0$,
a contradiction. Thus $T'$ is indeed cyclic. Observe now that have
$m(T)\mathcal{H}=m(T)\mathcal{M}+m(T')\mathcal{N}\subset\mathcal{M}$.
Denote now $\mathcal{M}_{1}=\overline{m(T)\mathcal{H}}$, $\mathcal{N}_{1}=\mathcal{M}_{1}^{\perp}$,
and $T_{1}=P_{\mathcal{N}_{1}}T|\mathcal{N}_{1}$. Clearly $m(T_{1})=0$,
and $T^{\prime*}=T_{1}^{*}|\mathcal{N}$. It follows that the minimal
function of $T_{1}$ is also $m$. Since $T_{1}$ has a cyclic vector,
it follows that $\mathcal{M}=\mathcal{M}_{1}$ by the results of \cite{complements2}
(see also \cite[Theorem III.2.13]{Berco}).

We start next with a a given inner function $m$, and denote by $m_{1}$
the minimal function of $T_{(m(T)\mathcal{H})^{\perp}}$. The function
$m_{1}$ must divide $m$, so that $m=m_{1}m_{2}$ for some other
inner function $m_{2}$. With the notation $\mathcal{H}_{1}=\overline{m_{1}(T)\mathcal{H}}=\overline{m(T)\mathcal{H}}$,
$T_{1}=T|\mathcal{H}_{1}$, the algebra $H^{\infty}(T_{1})$ is confluent,
and\[
\overline{m_{2}(T_{1})\mathcal{H}_{1}}=\overline{m_{2}(T)m_{1}(T)\mathcal{H}}=\overline{m(T)\mathcal{H}}=\mathcal{H}_{1},\]
so that $m_{2}(T_{1})$ has dense range. We claim that $\overline{m_{2}(T_{1})\mathcal{M}}=\mathcal{M}$
for every invariant subspace $\mathcal{M}$ for $T_{1}$. Indeed,
from the first part of (4) we know that $\mathcal{M}=\overline{m_{3}(T_{1})\mathcal{H}_{1}}$
for some inner function $m_{3}$. Hence\[
\overline{m_{2}(T_{1})\mathcal{M}}=\overline{m_{2}(T_{1})m_{3}(T_{1})\mathcal{H}_{1}}=\overline{m_{3}(T_{1})\overline{m_{2}(T_{1})\mathcal{H}_{1}}}=\overline{m_{3}(T_{1})\mathcal{H}_{1}}=\mathcal{M},\]
as claimed. Since $H^{\infty}(T_{1})$ is  confluent, we have $\sigma(T_{1})=\overline{\mathbb{D}}$
by part (1) of the theorem. This implies that $T_{1}$ belongs to
the class $\mathbb{A}$ defined in \cite{B-F-P}. By the results of
\cite{bcp}, there exist vectors $x,y\in\mathcal{H}_{1}$ such that\[
\langle u(T)x,y\rangle=\frac{1}{2\pi}\int_{0}^{2\pi}(1-m_{2}(0)\overline{m_{2}(e^{it})})u(e^{it})\, dt\]
for all $u\in H^{\infty}$. In particular, $\langle v(T)m_{2}(T)x,y\rangle=0$
for $v\in H^{\infty}$. Set $\mathcal{M}=\bigvee\{T^{n}x:n\ge0\}$,
and observe now that $y\perp m_{2}(T)\mathcal{M}$, and therefore
$y\perp\mathcal{M}$ as well. In particular,\[
0=\langle x,y\rangle=\frac{1}{2\pi}\int_{0}^{2\pi}(1-m_{2}(0)\overline{m_{2}(e^{it})})\, dt=1-|m_{2}(0)|^{2},\]
and this implies that $m_{2}$ is a constant function. We reach the
desired conclusion that the minimal function of $T_{(m(T)\mathcal{H})^{\perp}}$
is $m$.

To prove (3), assume that $S\subset\mathbb{D}$ has an accumulation
point in $\mathbb{D}$, and note that the space $\mathcal{N}=\bigvee\{\ker(\lambda I-T^{*}):\lambda\in S\}$
is invariant for $T^{*}$, and therefore $\mathcal{M}=\mathcal{N}^{\perp}$
is invariant for $T$. If $\mathcal{M}\ne\{0\}$, we have then $m(T)\mathcal{H}\subset\mathcal{M}$
for some inner function $m$, and therefore $\ker m(T)^{*}\supset\mathcal{N}$.
Given $\lambda\in S$, choose a nonzero vector $f_{\lambda}\in\ker(\lambda I-T)^{*}$,
and observe that $0=m(T)^{*}f_{\lambda}=\overline{m(\lambda)}f_{\lambda}$.
Thus $m(\lambda)=0$ for $\lambda\in S$, and we conclude that $m=0$,
which is impossible. This contradiction implies that $\mathcal{M}=\{0\}$,
thus verifying (3).

Consider next an operator $X\in\{T\}'=H^{\infty}(T)'$. By Proposition
\ref{pro:commutant-is-quotient-ring}, there exist $u,v\in H^{\infty}$,
$v\ne0$, so that $v(T)X=u(T)$. With $f_{\lambda}$ as above, we
have \[
\overline{v(\lambda)}X^{*}f_{\lambda}=(v(T)X)^{*}f_{\lambda}=u(T)^{*}f_{\lambda}=\overline{u(\lambda)}f(\lambda),\]
and thus\[
\left|\frac{u(\lambda)}{v(\lambda)}\right|=\frac{\|X^{*}f_{\lambda}\|}{\|f_{\lambda}\|}\le\|X^{*}\|.\]
 We deduce that $w=u/v\in H^{\infty}$ and $X=w(T)$.

The fact that the powers of $T^{*}$ tend strongly to zero follows
from (3) because $T^{*n}f_{\lambda}=\overline{\lambda}^{n}f_{\lambda}\to0$
as $n\to\infty$ for $\lambda\in\mathbb{D}$. It remains to prove
that the space\[
\mathcal{M}=\{h\in\mathcal{H}:\lim_{n\to\infty}\|T^{n}h\|=0\}\]
is equal to $\{0\}$. Assume to the contrary that $\mathcal{M}\ne\{0\}$,
and observe that $H^{\infty}(T|\mathcal{M})$ is also confluent. In
particular, $\sigma(T|\mathcal{M})=\overline{\mathbb{D}}$ and $T|\mathcal{M}$
is of class $C_{00}$. According to \cite{brown} and \cite[Theorem 6.6]{B-F-P},
$T|\mathcal{M}$ belongs to the class $\mathbb{A}_{\aleph_{0}}$,
and by \cite[Corollary 5.5]{B-F-P} $T$ has a further invariant subspace
$\mathcal{N}\subset\mathcal{M}$ such that $\mathcal{N}\ominus\overline{T\mathcal{N}}$
has infinite dimension. This space must however have dimension $1$
because $H^{\infty}(T|\mathcal{N})$ is confluent. This contradiction
shows that we must have $\mathcal{M}=\{0\}$, as claimed.
\end{proof}
Recall that $N_{+}\subset N$ denotes the collection of functions
of the form $u/v$, where $u,v\in H^{\infty}$ and $v$ is outer.
\begin{cor}
\label{cor:v-always-outer}Let $T\in\mathcal{L}(\mathcal{H})$ be
a completely nonunitary contraction such that $H^{\infty}(T)$ is
confluent, and fix a vector $h_{0}\in\ker T^{*}$. Assume that $\mathcal{H}=\bigvee\{T^{n}h_{0}:n\ge0\}$,
that is $h_{0}$ is cyclic for $T$. Then $h/h_{0}\in N_{+}$ for
every $h\in\mathcal{H}$.\end{cor}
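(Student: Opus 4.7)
The plan is to manipulate the basic identity $v(T)h = u(T)h_{0}$ (with $u,v \in H^{\infty}$ and $v \not\equiv 0$) so that the denominator becomes outer. Write $v = v_{i}v_{o}$ for the inner-outer factorization of $v$, and notice that once we know $v_{i}$ divides $u$ in $H^{\infty}$, say $u = v_{i}u_{1}$, the injectivity of $v_{i}(T)$ (which is guaranteed by confluence) lets us cancel to obtain $v_{o}(T)h = u_{1}(T)h_{0}$; this exhibits $h/h_{0} = u_{1}/v_{o}$ as an element of $N_{+}$. The whole task therefore reduces to showing that $v_{i}$ divides $u$.

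To prove this divisibility I would introduce the invariant subspace $\mathcal{M} = \overline{v_{i}(T)\mathcal{H}}$. By Theorem~\ref{thm:spectral-picture-for-confluence}(4), the compression $T' = P_{\mathcal{M}^{\perp}}T|\mathcal{M}^{\perp}$ is quasisimilar to $S(v_{i})$; in particular $T'$ is of class $C_{0}$ with minimal function $v_{i}$. If $v_{i}$ is constant there is nothing to prove, so assume $v_{i}$ nonconstant, which forces $\mathcal{M} \ne \mathcal{H}$. Since $h_{0}$ is cyclic for $T$, the invariant subspace $\mathcal{M}$ cannot contain $h_{0}$ (otherwise $\bigvee_{n\ge 0} T^{n}h_{0} = \mathcal{H} \subset \mathcal{M}$); hence $h_{0}' := P_{\mathcal{M}^{\perp}}h_{0} \ne 0$, and from the standard identity $P_{\mathcal{M}^{\perp}}T^{n}h_{0} = T'^{n}h_{0}'$ one checks that $h_{0}'$ is cyclic for $T'$.

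The factorization $v_{i}(T)v_{o}(T)h = u(T)h_{0}$ places $u(T)h_{0}$ inside $v_{i}(T)\mathcal{H} \subset \mathcal{M}$, so
\[
u(T')h_{0}' = P_{\mathcal{M}^{\perp}}u(T)h_{0} = 0.
\]
Because $u(T')$ commutes with $T'$ and vanishes at the cyclic vector $h_{0}'$, it vanishes on the dense subspace $\{p(T')h_{0}' : p \text{ a polynomial}\}$, hence $u(T') = 0$. Since $v_{i}$ is the minimal function of $T'$, this forces $v_{i} \mid u$, and the proof concludes via the cancellation described above.

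The main obstacle is the passage to the compression and the identification of its minimal function with $v_{i}$: this is precisely where the deep Theorem~\ref{thm:spectral-picture-for-confluence}(4) is used, and the whole argument collapses without it. Everything else — extracting a cyclic vector for $T'$ from one for $T$, and upgrading ``$u(T')$ kills a cyclic vector'' to ``$u(T') = 0$'' via the class-$C_{0}$ functional calculus — is routine.
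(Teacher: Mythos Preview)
Your proof is correct. Both your argument and the paper's reduce the claim to showing that the inner factor of the denominator divides the numerator, and both invoke Theorem~\ref{thm:spectral-picture-for-confluence}(4) for this step; the difference is only in how that divisibility is extracted. The paper factors both numerator and denominator as inner times outer, uses cyclicity of $h_{0}$ together with the fact that an outer function of $T$ is a quasiaffinity to obtain the inclusion $\overline{m_{0}(T)\mathcal{H}}\subset\overline{m(T)\mathcal{H}}$ of invariant subspaces, and then reads off $m\mid m_{0}$ from the correspondence in Theorem~\ref{thm:spectral-picture-for-confluence}(4). You instead pass to the compression $T'$ on $(\overline{v_{i}(T)\mathcal{H}})^{\perp}$, use the ``conversely'' part of Theorem~\ref{thm:spectral-picture-for-confluence}(4) to identify its minimal function as $v_{i}$, manufacture a cyclic vector for $T'$ out of $h_{0}$, and deduce $v_{i}\mid u$ from $u(T')=0$. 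The paper's route is marginally shorter (no need to check cyclicity for the compression), but your version is entirely sound and makes explicit the role of the $C_{0}$ compression.
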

\begin{proof}
Choose functions $u,u_{0}\in H^{\infty}$ such that $u_{0}/u=h/h_{0}$.
Thus we have $u_{0}(T)h_{0}=u(T)h$. Consider the factorizations $u=mv$
and $u_{0}=m_{0}v_{0}$, where $m,m_{0}$ are inner and $v,v_{0}$
are outer. By \cite[Proposition III.3.1]{Sz-N-F}, the operator $v_{0}(T)$
is a quasiaffinity, and therefore \[
\overline{m_{0}(T)\mathcal{H}_{0}}=\bigvee_{n\ge0}T^{n}v_{0}(T)m_{0}(T)h_{0}=\bigvee_{n\ge0}T^{n}v(T)m(T)h\subset\overline{m(T)\mathcal{H}}.\]
It follows that $(m(T)\mathcal{H})^{\perp}\subset(m_{0}(T)\mathcal{H})^{\perp}$
and thus $m$ divides $m_{0}$ by Theorem \ref{thm:spectral-picture-for-confluence}(4).
It follows that \[
\frac{h}{h_{0}}=\frac{u_{0}}{u}=\frac{v_{0}(m_{0}/m)}{v}\in N_{+},\]
as claimed.
\end{proof}
We will denote by $A$ the \emph{disk algebra}. This consists of those
functions in $H^{\infty}$ which are restrictions of continuous functions
on $\overline{\mathbb{D}}$. If $T$ is a completely nonunitary contraction,
we set $A(T)=\{u(T):u\in A\}$.
\begin{cor}
\label{cor:A(T)-never-confluent}Consider an operator $T\in\mathcal{L}(\mathcal{H})$,
where $\mathcal{H}$ is an infinite dimensional Hilbert space.
\begin{enumerate}
\item The algebra $\mathcal{P}_{T}$ is not confluent.
\item If $T$ is a completely nonunitary contraction, then $A(T)$ is not
confluent.
\end{enumerate}
\end{cor}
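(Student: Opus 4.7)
The plan is to argue by contradiction in both parts, producing a suitable nonconstant singular inner function $m$ and combining the rationally strictly cyclic vector supplied by Proposition \ref{pro:confluent=00003Drsc+noKer} with Theorem \ref{thm:spectral-picture-for-confluence}.

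I first reduce both parts to the setting of a completely nonunitary contraction with confluent $H^{\infty}$-algebra. For (1), if $T=0$ then $\mathcal{P}_T=\mathbb{C}I$, which cannot be confluent on an infinite-dimensional space (two linearly independent vectors admit no relation by nonzero scalars). If $T\ne 0$, replace $T$ by $T/\|T\|$ and observe $\mathcal{P}_{T/\|T\|}=\mathcal{P}_T$. A nontrivial Sz.-Nagy--Foias decomposition $T=T_1\oplus U$ with $U$ unitary on $\mathcal{H}_u\ne\{0\}$ would contradict confluence: for nonzero $h_1\in\mathcal{H}_1$ and $h_2\in\mathcal{H}_u$, the vectors $p(T)h_1\in\mathcal{H}_1$ and $q(T)h_2\in\mathcal{H}_u$ can coincide only if $p(T)h_1=0$, which violates injectivity. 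If $T$ were unitary, some $T-\lambda I$ with $\lambda\in\sigma(T)\subset\mathbb{T}$ would fail to be injective, again contradicting Proposition \ref{pro:confluent=00003Drsc+noKer}. Hence $T$ (after rescaling) is cnu; for (2) this is the hypothesis. In either case $\mathcal{P}_T\subset H^\infty(T)$ (resp.\ $A(T)\subset H^\infty(T)$), so confluence of the smaller algebra transfers immediately to $H^\infty(T)$; by Theorem \ref{thm:spectral-picture-for-confluence}(6) we get $T\in C_{10}$, so $T$ is not of class $C_0$, and consequently $u(T)$ is nonzero and injective for every $u\in H^\infty\setminus\{0\}$.

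Now fix a rationally strictly cyclic vector $h_0$, choose a nonconstant singular inner function $m$, and set $h=m(T)h_0\ne 0$. The rationally strictly cyclic property supplies polynomials $p,q$ in part (1), or functions $v,u\in A$ in part (2), with $p(T)$ (resp.\ $v(T)$) injective and $p(T)h=q(T)h_0$ (resp.\ $v(T)h=u(T)h_0$). Substituting $h=m(T)h_0$ yields $(pm-q)(T)h_0=0$ (resp.\ $(vm-u)(T)h_0=0$); since every nonzero element of $H^\infty(T)$ is injective and $h_0\ne 0$, we obtain the identity $pm=q$ (resp.\ $vm=u$) in $H^\infty$.

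For part (1), $p,q$ are polynomials with $p\ne 0$, so $m=q/p$ would be a nonconstant rational inner function; but a nonconstant rational inner function is a finite Blaschke product and so has zeros in $\mathbb{D}$, whereas a singular inner function has none. This is the contradiction. For part (2), I would take $m$ to have singular measure $\sigma=\sum_{n\ge 1}c_n\delta_{\zeta_n}$ with $\{\zeta_n\}$ dense in $\mathbb{T}$ and $c_n>0$, $\sum_n c_n<\infty$. Factoring the atom at $\zeta_n$ out of the product representation shows that $m$ has radial limit $0$ at each $\zeta_n$. For $u,v\in A$ with $vm=u$, continuity of $u$ on $\overline{\mathbb{D}}$ gives
\[
u(\zeta_n)=\lim_{r\to 1^-}v(r\zeta_n)m(r\zeta_n)=v(\zeta_n)\cdot 0=0
\]
for every $n$; by density of $\{\zeta_n\}$ and continuity, $u\equiv 0$ on $\mathbb{T}$, hence $u\equiv 0$ in $H^\infty$. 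Then $vm=0$, and as $m\ne 0$ while $H^\infty$ has no zero divisors (F.~and M.~Riesz), $v\equiv 0$, contradicting the injectivity of $v(T)$. The main obstacle is precisely this last step of part (2): the singular inner function must be chosen so that the constraint $vm\in A$ forces $v$ to vanish identically, and the atomic construction with dense support, together with the radial-limit-zero computation at each atom, is what propagates vanishing from a single boundary point to all of $\mathbb{T}$.
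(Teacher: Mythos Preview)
Your argument is essentially the paper's: both reduce to showing that a suitable singular inner function $m$ cannot be written as $u/v$ with $u,v$ in the disk algebra (or polynomials), by using confluence of $A(T)$ (resp.\ $\mathcal P_T$) together with injectivity of the $H^\infty$ functional calculus to force the identity $vm=u$. Your verification for part (2), via an atomic singular measure with dense support and the radial-limit-zero computation at each atom, is in fact more explicit than the paper's, which simply asserts that a singular inner function whose singular measure has closed support equal to $\mathbb T$ is not a quotient of disk-algebra elements.

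There is one genuine slip in your reduction for part (1). After rescaling to $\|T\|=1$ you dispose of the purely unitary case by claiming that ``some $T-\lambda I$ with $\lambda\in\sigma(T)\subset\mathbb T$ would fail to be injective''. This is false: the bilateral shift $U$ has $\sigma(U)=\mathbb T$ but no eigenvalues, so every $U-\lambda I$ is injective, and Proposition \ref{pro:confluent=00003Drsc+noKer} gives no contradiction. The paper sidesteps the issue entirely by rescaling to $\|T\|<1$ (replace $T$ by $\alpha T$ with $0<\alpha<1/\|T\|$); this forces $T$ to be completely nonunitary and moreover gives $\mathcal P_T\subset A(T)$, reducing (1) to (2) in one stroke. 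With this adjustment your proof goes through.
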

\begin{proof}
In proving (1), there is no loss of genrality in assuming that $\|T\|<1$
since $\mathcal{P}_{T}=\mathcal{P}_{\alpha T}$ for any $\alpha>0$.
Under this assumption, we have $\mathcal{P}_{T}\subset A(T)$, so
that it suffices to prove part (2). Assume therefore that $T$ is
a completely nonunitary contraction and $A(T)$ is confluent. The
larger algebra $H^{\infty}(T)$ is confluent as well, and \ref{pro:commutant-is-quotient-ring}
implies that for every $f\in H^{\infty}$, the operator $f(T)\in\{T\}'$
can be written as $f(T)=v(T)^{-1}u(T)$ with $u,v\in A$, $v\ne0$.
We have then $v(T)f(T)=u(T)$, and thus $f=u/v$. It is known however
that there are functions in $H^{\infty}$ which cannot be represented
as quotients of elements of $A$. An example is provided by any singular
inner function \[
f(\lambda)=e^{-\int_{\mathbb{T}}\frac{\zeta+\lambda}{\zeta-\lambda}\, d\mu(\zeta)},\quad\lambda\in\mathbb{D},\]
such that the closed support of the singular measure $\mu$ is the
entire circle $\mathbb{T}$.
\end{proof}
The assertion in Proposition \ref{pro:commutant-is-quotient-ring},
concerning unbounded linear transformations, can be improved when
$H^{\infty}(T)$ is confluent.
\begin{prop}
\label{pro:confluence-and-unbounded-commutant}Let $T\in\mathcal{L}(\mathcal{H})$
be a completely nonunitary contraction such that $H^{\infty}(T)$
is confluent. Every closed, densely defined linear transformation
commuting with $T$ is of the form $v(T)^{-1}u(T)$, where $u,v\in H^{\infty}$
and $v$ is an outer function.\end{prop}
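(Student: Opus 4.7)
The plan is to proceed in three stages: extend the commutation from $T$ to all of $H^{\infty}(T)$; use Proposition \ref{pro:commutant-is-quotient-ring} to place $X$ inside some quotient $v(T)^{-1}u(T)$ after cancellation; and then show that $v$ can be taken outer and finally promote the inclusion to equality.

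Since $X$ is closed and commutes with $T$, it commutes with every polynomial in $T$; using the norm approximation of $u_{r}(T)$ by polynomials (where $u_{r}(z)=u(rz)$ for $0<r<1$ is analytic in a neighborhood of $\overline{\mathbb{D}}$) followed by the SOT continuity $u_{r}(T)\to u(T)$ of the Sz.-Nagy--Foias calculus as $r\to 1^{-}$, the closedness of $X$ forces $Xu(T)=u(T)X$ on $\mathcal{D}(X)$ for every $u\in H^{\infty}$. Because $H^{\infty}(T)$ is confluent, every nonzero vector in $\mathcal{H}$ is rationally strictly cyclic (Proposition \ref{pro:confluent=00003Drsc+noKer}); picking any nonzero $h_{0}\in\mathcal{D}(X)$ and applying Proposition \ref{pro:commutant-is-quotient-ring} yields $u_{0},v_{0}\in H^{\infty}$ with $\ker v_{0}(T)=\{0\}$ and $X\subset v_{0}(T)^{-1}u_{0}(T)$. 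Letting $d$ be the greatest common inner divisor of $u_{0}$ and $v_{0}$ and writing $u_{0}=du$, $v_{0}=dv$, the injectivity of $d(T)$ (from confluence) shows that the graphs of $v_{0}(T)^{-1}u_{0}(T)$ and $v(T)^{-1}u(T)$ coincide, so we may assume the inner factors of $u$ and $v$ are coprime.

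The main structural step is that this $v$ must be outer. Suppose not: $v=n\tilde{v}$ with $n$ a nonconstant inner function; coprimality forces $n\wedge u=1$. For each $h\in\mathcal{D}(X)$, $u(T)h=v(T)Xh\in n(T)\mathcal{H}\subset\mathcal{M}:=\overline{n(T)\mathcal{H}}$, so $0=P_{\mathcal{M}^{\perp}}u(T)h=u(T_{\mathcal{M}^{\perp}})P_{\mathcal{M}^{\perp}}h$ via the standard intertwining. By Theorem \ref{thm:spectral-picture-for-confluence}(4) the compression $T_{\mathcal{M}^{\perp}}$ has minimal function exactly $n$, and the standard $C_{0}$-calculus \cite[\S III.4]{Berco} together with $n\wedge u=1$ makes $u(T_{\mathcal{M}^{\perp}})$ a quasiaffinity, in particular injective. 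Hence $P_{\mathcal{M}^{\perp}}h=0$, i.e., $\mathcal{D}(X)\subset\mathcal{M}$; but $\mathcal{M}$ is a proper closed subspace of $\mathcal{H}$, contradicting density of $\mathcal{D}(X)$.

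It remains to improve the inclusion $X\subset Y:=v(T)^{-1}u(T)$ to equality. The plan is to show that $\mathcal{D}(X)$ is graph-norm dense in $\mathcal{D}(Y)$: given $h\in\mathcal{D}(Y)$, exhibit $h_{n}\in\mathcal{D}(X)$ with $h_{n}\to h$ and $Xh_{n}\to Yh$, so that closedness of $X$ forces $h\in\mathcal{D}(X)$. After fixing a cyclic $h_{0}\in\mathcal{D}(X)\cap\ker T^{*}$ (possible because $\dim\ker T^{*}=1$ by Lemma \ref{lem:0-in-spectrum-ofT} together with density of $\mathcal{D}(X)$), Corollary \ref{cor:v-always-outer} represents $h/h_{0}=\alpha/\beta$ with $\beta$ outer; the outer approximants $\psi_{n}\in H^{\infty}$ with $|\psi_{n}\beta|=\min(n|\beta|,1)$ satisfy $\psi_{n}\beta\to 1$ boundedly a.e.\ on $\mathbb{T}$, and the $C_{\cdot 0}$ character of $T$ (Theorem \ref{thm:spectral-picture-for-confluence}(6)) combined with \cite[Lemmas II.1.6, II.1.7]{Sz-N-F} yields $(\psi_{n}\beta)(T)\to I$ strongly. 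Setting $h_{n}:=(\alpha\psi_{n})(T)h_{0}\in\mathcal{D}(X)$ gives $h_{n}=(\psi_{n}\beta)(T)h\to h$ and $Xh_{n}=(\psi_{n}\beta)(T)Yh\to Yh$. I expect the main obstacle to be precisely this final equality step: both arranging the cyclic $h_{0}\in\mathcal{D}(X)\cap\ker T^{*}$ and upgrading the bounded a.e.\ convergence $\psi_{n}\beta\to 1$ to strong operator convergence of $(\psi_{n}\beta)(T)$ are the delicate points, each leaning crucially on the $C_{10}$ structure of $T$ supplied by Theorem \ref{thm:spectral-picture-for-confluence} and on the Smirnov-class description of Corollary \ref{cor:v-always-outer}.
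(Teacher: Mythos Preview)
Your first three stages are fine and, in particular, your argument that $v$ must be outer is essentially the paper's argument, just carried out before rather than after the equality step. The problem is entirely in the final stage, where you upgrade $X\subset Y:=v(T)^{-1}u(T)$ to $X=Y$.

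You claim one can pick a cyclic $h_{0}\in\mathcal{D}(X)\cap\ker T^{*}$, justified by ``$\dim\ker T^{*}=1$ together with density of $\mathcal{D}(X)$''. Neither ingredient gives what you need. A dense linear manifold need not meet a given one-dimensional subspace, so nothing forces $\mathcal{D}(X)\cap\ker T^{*}\ne\{0\}$. Worse, $T$ may have \emph{no} cyclic vector at all (Corollary~\ref{cor:S(theta)not-cyclic}(3)), so Corollary~\ref{cor:v-always-outer} cannot be invoked on the full space $\mathcal{H}$; and without an outer $\beta$ in $h/h_{0}=\alpha/\beta$ your approximants $(\psi_{n}\beta)(T)$ will not converge strongly to the identity. (Even granting an outer $\beta$, the citation of \cite[Lemmas II.1.6, II.1.7]{Sz-N-F} is off --- those give weak convergence only; strong convergence does hold via the $C_{\cdot 0}$ isometric dilation and dominated convergence on $\mathbb{T}$, but one must first pass to a subsequence along which $\psi_{n}\beta\to 1$ a.e.)

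The paper avoids cyclic vectors entirely by working on the graph. Set $T_{1}=(T\oplus T)|\mathcal{G}(Y)$ and observe that the first-coordinate projection $Q:h\oplus k\mapsto h$ is a quasiaffinity with $QT_{1}=TQ$, so $H^{\infty}(T_{1})$ is confluent by Proposition~\ref{propolema:quasiaffine-and-stuff}(3). Now $\mathcal{G}(X)$ is a nonzero $T_{1}$-invariant subspace, hence $\mathcal{G}(X)=\overline{m(T_{1})\mathcal{G}(Y)}$ for some inner $m$ by Theorem~\ref{thm:spectral-picture-for-confluence}(4). Applying $Q$ gives $\overline{m(T)\mathcal{H}}=\overline{Q\mathcal{G}(X)}=\overline{\mathcal{D}(X)}=\mathcal{H}$, and the second assertion of Theorem~\ref{thm:spectral-picture-for-confluence}(4) then forces $m$ to be constant, whence $\mathcal{G}(X)=\mathcal{G}(Y)$. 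This replaces your graph-norm approximation by a purely structural appeal to the invariant-subspace description already established for confluent algebras.
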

\begin{proof}
Let $X$ be a closed, densely defined linear transformation commuting
with $T$. Since $X$ is closed, it must also commute with every operator
in $H^{\infty}(T)$. By Proposition \ref{pro:commutant-is-quotient-ring},
there exist $u,v\in H^{\infty}$ such that $v\not\equiv0$ and $X\subset v(T)^{-1}u(T)$.
Let us set \[
T_{1}=(T\oplus T)|\mathcal{G}(v(T)^{-1}u(T)),\]
and observe that the quasiaffinity $Q:h\oplus k\mapsto h$ from $\mathcal{G}(v(T)^{-1}u(T))$
to $\mathcal{H}$ satisfies $QT_{1}=TQ$. Thus $H^{\infty}(T_{1})\prec H^{\infty}(T)$,
and therefore $H^{\infty}(T_{1})$ is confluent by Proposition \ref{propolema:quasiaffine-and-stuff}(3).
The subspace $\mathcal{G}(X)$ is invariant for $T_{1}$, so that\[
\mathcal{G}(X)=\overline{m(T_{1})\mathcal{G}(v(T)^{-1}u(T))}\]
 for some inner function $m$. To prove the equality $X=v(T)^{-1}u(T)$,
it suffices to show that $m$ is in fact constant. Indeed, we have\begin{eqnarray*}
\overline{m(T)\mathcal{H}} & = & \overline{m(T)Q\mathcal{G}(v(T)^{-1}u(T))}=\overline{Qm(T_{1})\mathcal{G}(v(T)^{-1}u(T))}\\
 & = & \overline{Q\mathcal{G}(X)}=\overline{\mathcal{D}(X)}=\mathcal{H},\end{eqnarray*}
and the desired conclusion follows from the second assertion in Theorem
\ref{thm:spectral-picture-for-confluence}(4). There is no loss of
generality in assuming that $u$ and $v$ do not have any nonconstant
common inner divisor. We conclude the proof by showing that in this
case $v$ must be outer. Let $m$ be an inner divisor of $v$, and
note that for every $h\oplus k\in\mathcal{G}(X)$ we have \[
u(T)h=v(T)k\in\overline{m(T)\mathcal{H}},\]
and therefore $u(T)\mathcal{D}(X)\subset\overline{m(T)\mathcal{H}}.$
Since $\mathcal{D}(X)$ is dense in $\mathcal{H}$, we conclude that
$u(T_{(m(T)\mathcal{H})^{\perp}})=0$, and therefore $m$ divides
$u$. Thus $m$ is constant, and hence $v$ is outer.
\end{proof}
It follows from the results of \cite{C-D} that the one dimensional
spaces $\ker(\overline{\lambda}I-T)^{*}$ depend analytically on $\lambda$
and, in fact, there exists an analytic function $f:\mathbb{D}\to\mathcal{H}$
such that $\ker(\overline{\lambda}I-T)^{*}=\mathbb{C}f(\lambda)$
for $\lambda\in\mathbb{D}$. A local version of this result is easily
proved. Indeed, set $L=(T^{*}T)^{-1}T^{*}$. Given a unit vector $f_{0}\in\ker T^{*},$
the function\begin{equation}
f(\lambda)=(I-\lambda L^{*})^{-1}f_{0}=\sum_{k=0}^{\infty}\lambda^{n}L^{*n}f_{0}\label{eq:f(lambda)}\end{equation}
is analytic for $|\lambda|<1/\|L\|$, and obviously $T^{*}f(\lambda)=\lambda f(\lambda)$.
This calculation is valid for any left inverse of $T$. The operator
$L$ has the advantage that $L^{*}\mathcal{H}=T\mathcal{H}$, and
therefore $\langle T^{n}f_{0},f_{0}\rangle=\langle f_{0},L^{*n}f_{0}\rangle=0$
for $n\ge1$. These relations, along with $LT=I$, obviously imply\begin{equation}
\langle T^{n}f_{0},L^{*m}f_{0}\rangle=\delta_{nm},\quad n,m\in\mathbb{N}.\label{eq:bi-ortho}\end{equation}

\begin{prop}
\label{propo-former-lem:bi-orthogonal}Let $T\in\mathcal{L}(\mathcal{H})$
be a completely nonunitary contraction such that $H^{\infty}(T)$
is confluent. Define $L=(T^{*}T)^{-1}T^{*}$ and fix a unit vector
$f_{0}\in\ker T^{*}$.
\begin{enumerate}
\item The vector $f_{0}$ is cyclic for $L^{*}$.
\item $\bigcap\{T^{n}\mathcal{H}:n\ge0\}=\{0\}.$
\item $\bigcap\{L^{*n}\mathcal{H}:n\ge0\}=\mathcal{H}\ominus\left[\bigvee\{T^{n}f_{0}:n\ge0\}\right]$.
\end{enumerate}
\end{prop}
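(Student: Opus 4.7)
The plan is to prove the three parts in order, leveraging the analytic eigenvector family from (\ref{eq:f(lambda)}), the bi-orthogonality relation (\ref{eq:bi-ortho}), and the Fredholm structure of $T$ and $L$ supplied by Lemma \ref{lem:0-in-spectrum-ofT}.

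For (1), I would observe that the series $f(\lambda) = \sum_{k \ge 0}\lambda^{k} L^{*k} f_0$ converges in norm for $|\lambda| < 1/\|L\|$, so each vector $f(\lambda)$ lies in the closed linear span $\mathcal{M} = \bigvee\{L^{*k}f_0 : k \ge 0\}$. Since $f(\lambda)$ spans the one-dimensional eigenspace $\ker(\bar\lambda I - T^*)$, applying Theorem \ref{thm:spectral-picture-for-confluence}(3) to $S = \{\lambda : |\lambda| < 1/\|L\|\}$ (which certainly has an accumulation point in $\mathbb{D}$) gives $\mathcal{M} \supset \bigvee_{\lambda \in S}\ker(\bar\lambda I - T^*) = \mathcal{H}$, proving that $f_0$ is cyclic for $L^*$.

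For (2), the identity $T^*L^* = (LT)^* = I$ together with $T^*f_0 = 0$ yields $T^{*(k+1)}L^{*k}f_0 = 0$ for all $k \ge 0$, so $L^{*k}f_0 \in \ker T^{*(k+1)} = (T^{k+1}\mathcal{H})^\perp$. Part (1) then forces $\bigvee_{n \ge 1}\ker T^{*n} = \mathcal{H}$; any $h \in \bigcap_n T^n\mathcal{H}$ is automatically orthogonal to each $\ker T^{*n}$, hence to $\mathcal{H}$, and thus $h = 0$.

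For (3), the strategy is to compute $\ker L^n$ explicitly. Since $\ker L = \ker T^* = \mathbb{C}f_0$ is one-dimensional (by Lemma \ref{lem:0-in-spectrum-ofT}) and $LT = I$ makes $L$ surjective, $L$ is Fredholm of index $+1$; hence $L^n$ is surjective with $\dim \ker L^n = n$. The relations $Lf_0 = (T^*T)^{-1}T^*f_0 = 0$ and $L^{j}T^{j} = I$ yield $L^n T^k f_0 = L^{n-k}f_0 = 0$ for $0 \le k < n$, producing $n$ vectors in $\ker L^n$ which are linearly independent by the bi-orthogonality (\ref{eq:bi-ortho}). A dimension count gives
\[
\ker L^n = \mathrm{span}\{T^k f_0 : 0 \le k < n\}, \qquad \bigvee_{n \ge 1}\ker L^n = \bigvee_{n \ge 0} T^n f_0.
\]
Surjectivity of $L^n$ makes $L^{*n}$ bounded below, so $L^{*n}\mathcal{H}$ is closed with $(L^{*n}\mathcal{H})^\perp = \ker L^n$, and intersecting yields the desired equality. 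The main point of care is this Fredholm/dimension count for $L^n$ in (3); the first two parts follow essentially immediately from (\ref{eq:f(lambda)}), Theorem \ref{thm:spectral-picture-for-confluence}(3), and the relations $T^*L^* = I$, $T^*f_0 = 0$.
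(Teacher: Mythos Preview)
Your proof is correct. Part (1) is essentially the paper's argument verbatim, but for (2) and (3) you take genuinely different routes.

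For (2), the paper uses the order function developed in Lemma \ref{pro:H=00003DD_n}: if $h\in T^{n}\mathcal{H}$ for all $n$, then ${\rm ord}_{0}(h/f_{0})\ge n$ for all $n$, forcing $h/f_{0}=0$. Your argument instead observes (via $T^{*}L^{*}=I$ and a short induction) that $L^{*k}f_{0}\in\ker T^{*(k+1)}$, so part (1) gives $\bigvee_{n}\ker T^{*n}=\mathcal{H}$, and any $h\in\bigcap_{n}T^{n}\mathcal{H}$ is orthogonal to every $\ker T^{*n}$. This avoids the $\mathrm{ord}_{0}$ machinery entirely and is in that sense more self-contained.

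For (3), the paper proves the nontrivial inclusion by the telescoping identity
\[
h=L^{*n}T^{*n}h+\sum_{k=0}^{n-1}L^{*k}(I-L^{*}T^{*})T^{*k}h,
\]
noting that $I-L^{*}T^{*}$ is the projection onto $\mathbb{C}f_{0}$, so each term in the sum vanishes when $h\perp T^{k}f_{0}$. You instead compute $\ker L^{n}$ by a Fredholm index/dimension count: $L$ is surjective with one-dimensional kernel, so $\dim\ker L^{n}=n$, and the $n$ vectors $T^{k}f_{0}$ ($0\le k<n$) fill it up by bi-orthogonality; then $L^{*n}\mathcal{H}=(\ker L^{n})^{\perp}$ gives the result. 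Both approaches are short; the paper's is a direct algebraic verification, while yours trades that computation for a standard index argument and the closed-range fact for $L^{*n}$.
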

\begin{proof}
We have seen that $\ker(\lambda I-T)=\mathbb{C}f(\lambda)$ for $\lambda$
close to zero, where $f(\lambda)$ is given by (\ref{eq:f(lambda)})
and it belongs to $\bigvee\{L^{*n}f_{0}:n\ge0\}$. Thus (1) follows
from Theorem \ref{thm:spectral-picture-for-confluence}(3). To prove
$(2)$, let $h$ be an element in the intersection, and observe that
${\rm ord}_{0}(f/f_{0})\ge n$ for all $n\in\mathbb{N}$. Therefore
$f/f_{0}=0$, and necessarily $f=0$. The orthogonality relations
(\ref{eq:bi-ortho}) imply the inclusion\[
\bigcap_{n\ge0}L^{*n}\mathcal{H}\subset\mathcal{H}\ominus\bigvee_{n\ge0}T^{n}f_{0}.\]
Conversely, consider a vector $h\in\mathcal{H}\ominus\left[\bigvee\{T^{n}f_{0}:n\ge0\right]$.
Given $n\ge1$, we have\[
h=L^{*n}T^{*n}h+\sum_{k=0}^{n-1}L^{*k}(I-L^{*}T^{*})T^{*k}h.\]
Since $I-L^{*}T^{*}$ is the orthogonal projection onto $\mathbb{C}f_{0}$,
and \[
\langle T^{*k}h,f_{0}\rangle=\langle h,T^{k}f_{0}\rangle=0,\]
we deduce that $h=L^{*n}T^{*n}h\in R^{*n}\mathcal{H}$, thus proving
the opposite inclusion .
\end{proof}

\section{Confluence and Functional Models\label{sec:Confluence-and-Functional-mod}}

The results in Section \ref{sec:Confluent-Algebras-of} show that
completely nonunitary contractions $T$ for which $H^{\infty}(T)$
is confluent share many of the properties of the unilateral shift
$S$. In this section we will describe some quasiaffine transforms
of such operators $T$. These quasiaffine transforms are in fact functional
models associated with inner functions of the form\[
\Theta=\left[\begin{array}{c}
\theta_{1}\\
\theta_{2}\end{array}\right],\]
where $\theta_{1},\theta_{2}\in H^{\infty}$. The condition that $\Theta$
be inner amounts to the requirement that\[
|\theta_{1}(\zeta)|^{2}+|\theta_{2}(\zeta)|^{2}=1,\quad\text{{\rm a.e. }}\zeta\in\mathbb{T}.\]
We recall the construction of the functional model associated with
such a function $\Theta$. The subspace\[
\{\theta_{1}u\oplus\theta_{2}u:u\in H^{2}\}\subset H^{2}\oplus H^{2}\]
is obviously invariant for $S\oplus S$, and thus the orthogonal complement\[
\mathcal{H}(\Theta)=[H^{2}\oplus H^{2}]\ominus\{\theta_{1}u\oplus\theta_{2}u:u\in H^{2}\}\]
 is invariant for $S^{*}\oplus S^{*}$. The operator $S(\Theta)\in\mathcal{L}(\mathcal{H}(\Theta))$
is the compression of $S\oplus S$ to this space or, equivalently,
$S(\Theta)^{*}=(S^{*}\oplus S^{*})|\mathcal{H}(\Theta)$.
\begin{lem}
\label{lem:S(Theta)-confluent}Let $\Theta=\left[\begin{array}{c}
\theta_{1}\\
\theta_{2}\end{array}\right]$ be an inner function. The algebra $H^{\infty}(S(\Theta))$ is confluent
if and only if the functions $\theta_{1}$ and $\theta_{2}$ do not
have a nonconstant common inner factor.\end{lem}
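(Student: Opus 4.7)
My approach will treat the two implications separately, anchored by Proposition \ref{pro:confluent=00003Drsc+noKer}, which characterizes confluence (for commutative unital algebras) as joint existence of a rationally strictly cyclic vector and injectivity of every nonzero element of the algebra.

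For necessity, suppose $d$ is a common nonconstant inner divisor of $\theta_1$ and $\theta_2$, and write $\theta_i = d\theta_i'$. Note that $|\theta_1'|^2 + |\theta_2'|^2 = 1$ a.e.\ on $\mathbb{T}$. Picking any nonzero $v \in \mathcal{H}(d)$, I claim that $h := (\theta_1' v, \theta_2' v)$ is a nonzero element of $\ker d(S(\Theta))$, which by Proposition \ref{pro:confluent=00003Drsc+noKer} obstructs confluence. That $d h = \Theta v \in \Theta H^2$, and hence $d(S(\Theta)) h = 0$, is immediate. To verify $h \in \mathcal{H}(\Theta)$ one computes $\langle h, \Theta w\rangle = \int_\mathbb{T} v \bar w \bar d \, dm$ on the circle (using $\theta_i' \overline{\theta_i} = \bar d |\theta_i'|^2$ and $|\theta_1'|^2 + |\theta_2'|^2 = 1$), which vanishes for all $w \in H^2$ precisely because $v \in H^2 \ominus dH^2$.

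For sufficiency, assuming $\theta_1 \wedge \theta_2 = 1$, the plan is to produce a quasiaffinity $X : \mathcal{H}(\Theta) \to H^2$ with $X S(\Theta) = SX$, yielding $S(\Theta) \prec S$. Lemma \ref{lem:quasisim-for-cnu-and-for-hinfty}(1) then gives $H^\infty(S(\Theta)) \prec H^\infty(S) = \mathcal{W}_S$, and Proposition \ref{propolema:quasiaffine-and-stuff}(3), together with confluence of $\mathcal{W}_S$ (Corollary \ref{cor:Tqsim-S-implies-confl}), delivers the conclusion. The candidate is $X(k_1, k_2) := \theta_2 k_1 - \theta_1 k_2$, which maps boundedly into $H^2$, kills $\Theta H^2$, and intertwines because $X$ commutes with multiplication by $z$. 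Injectivity on $\mathcal{H}(\Theta)$ reduces to the statement that if $\theta_1,\theta_2$ are coprime and a meromorphic function $f$ on $\mathbb{D}$ satisfies $\theta_1 f, \theta_2 f \in H^2$, then $f \in H^2$: writing $f = a/b$ with $a \in H^2$ and $b$ inner coprime to $a$, divisibility forces $b \mid \theta_i$ for $i=1,2$, hence $b$ is constant.

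The main technical step I anticipate is establishing that the range of $X$ is dense. Equivalently, $\theta_1 H^2 + \theta_2 H^2$ must be dense in $H^2$, which by orthogonal complements amounts to $\mathcal{H}(\theta_1) \cap \mathcal{H}(\theta_2) = \{0\}$. The clean route is the Beurling lattice identity $\theta_1 H^2 \vee \theta_2 H^2 = (\theta_1 \wedge \theta_2) H^2$, which yields $\mathcal{H}(\theta_1) \cap \mathcal{H}(\theta_2) = \mathcal{H}(\theta_1 \wedge \theta_2)$; the coprimality hypothesis makes this trivial, closing the argument.
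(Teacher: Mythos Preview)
Your necessity argument is correct and essentially matches the paper's: both exhibit a nonzero vector in $\ker d(S(\Theta))$ when $d$ is a nonconstant common inner divisor, with your version being slightly more explicit about the orthogonality check.

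For sufficiency you take a genuinely different route. The paper works \emph{intrinsically}: it verifies directly that $u(S(\Theta))$ is injective for every nonzero $u\in H^\infty$, and then exhibits $P_{\mathcal{H}(\Theta)}(1\oplus 0)$ as a rationally strictly cyclic vector by writing down, for each $f_1\oplus f_2\in\mathcal{H}(\Theta)$, explicit functions $u,v$ with $v(S(\Theta))(f_1\oplus f_2)=u(S(\Theta))P_{\mathcal{H}(\Theta)}(1\oplus 0)$. You instead construct the intertwining map $X(k_1,k_2)=\theta_2 k_1-\theta_1 k_2$ and transfer confluence from $S$ via Proposition~\ref{propolema:quasiaffine-and-stuff}(3). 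This is perfectly legitimate; in fact your $X$ is (up to sign) precisely the quasiaffinity $Q$ that the paper introduces later in Corollary~\ref{cor:S(theta)=00003Dprec-S}, so you are front-loading that construction. Your approach is more conceptual and immediately yields $S(\Theta)\prec S$ as a bonus; the paper's is more self-contained and gives the rationally strictly cyclic vector explicitly.

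There is, however, a gap in your injectivity argument. You claim that the meromorphic $f=k_1/\theta_1=k_2/\theta_2$ can be written as $a/b$ with $a\in H^2$ and $b$ inner; the coprimality argument then forces $b$ constant, whence $f=a\in H^2$. But the existence of such a representation with $a\in H^2$ is exactly what is at stake: the canonical Nevanlinna factorization of $f$ only gives $a\in N^+$ a priori, and your divisibility step merely shows $f\in N^+$. To finish, you must use the inner condition $|\theta_1|^2+|\theta_2|^2=1$ a.e.\ on $\mathbb{T}$: this gives $|f|^2=|\theta_1 f|^2+|\theta_2 f|^2=|k_1|^2+|k_2|^2\in L^1$, and then Smirnov's theorem ($N^+\cap L^2=H^2$) yields $f\in H^2$. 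A smaller point: in your density argument the Beurling identity and the spaces $\mathcal{H}(\theta_i)$ are formulated for inner functions, whereas $\theta_1,\theta_2$ need not be inner; you should first pass to inner parts via $\overline{\theta_i H^2}=\iota(\theta_i)H^2$ before invoking the lattice identity.
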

\begin{proof}
If either of the functions $\theta_{j}$ is equal to zero, the other
one must be inner. The lemma is easily verified in this case. Indeed,
assume that $\theta_{1}$ is inner and $\theta_{2}=0$. If $\theta_{1}$
is not constant then $\ker\theta_{1}(S(\Theta))\ne\{0\}$, so that
$H^{\infty}(S(\Theta))$ is not confluent. Also, $\theta_{1}$ is
a common inner divisor of $\theta_{1}$ and $\theta_{2}$, so that
both conditions in the statement are false. On the other hand, if
$\theta_{1}$ is constant then $S(\Theta)$ is unitarily equivalent
to $S$, and the lemma is obvious in this case.

For the remainder of this proof, we consider the case in which both
functions $\theta_{j}$ are different from zero. Assume first that
$\theta_{j}=m\varphi_{j}$, where $m$ is a nonconstant inner function
and $\varphi_{j}\in H^{\infty}$ for $j=1,2$. The nonzero vector
$h\in\mathcal{H}(\Theta)$ defined by $h=P_{\mathcal{H}(\Theta)}(\varphi_{1}\oplus\varphi_{2})$
satisfies $m(S(\Theta))h=0$, and therefore the nonzero operator $m(S(\Theta))$
has nontrivial kernel. Thus $H^{\infty}(S(\Theta))$ is not confluent.

Assume now that $\theta_{1}$ and $\theta_{2}$ do not have a nonconstant
common inner factor. We verify first that $\ker u(S(\Theta))=\{0\}$
for $u\in H^{\infty}\setminus\{0\}$. It suffices to consider the
case of an inner function $u$. A vector $f_{1}\oplus f_{2}\in\ker u(S(\Theta))$
must satisfy $uf_{1}=\theta_{1}g$ and $uf_{2}=\theta_{2}g$ for some
$g\in H^{2}$. The fact that $\theta_{1}\wedge\theta_{2}=1$ implies
that $u$ divides $g$, and therefore $f_{1}\oplus f_{2}=\theta_{1}(g/u)\oplus\theta_{2}(g/u)$
belongs to $\mathcal{H}(\Theta)^{\perp}$ and the equality $f_{1}\oplus f_{2}=0$
follows. To conclude the proof, we will show that $h=P_{\mathcal{H}(\Theta)}(1\oplus0)$
is a rationally strictly cyclic vector for $H^{\infty}(S(\Theta))$.
Indeed, assume  that $f=f_{1}\oplus f_{2}\in\mathcal{H}(\Theta)\setminus\{0\}$,
and write $f_{1}=a_{1}/b$ and $f_{2}=a_{2}/b$, where $a_{1},a_{2},b\in H^{\infty}$
and $b$ is outer. Define functions $u=-b\theta_{2}$, $v=\theta_{1}a_{2}-\theta_{2}a_{1}$,
and note that\begin{eqnarray*}
v(S(\Theta))h-u(S(\Theta))f & = & P_{\mathcal{H}(\Theta)}(v\oplus0-uf_{1}\oplus uf_{2})\\
 & = & P_{\mathcal{H}(\Theta)}(\theta_{1}a_{2}\oplus\theta_{2}a_{2})=0.\end{eqnarray*}
The lemma follows because $u\not\equiv0$, and hence $u(S(\Theta))$
is injective.
\end{proof}
Let us remark that the condition $\theta_{1}\wedge\theta_{2}=1$ is
equivalent to the fact that the function $\Theta$ is $*$-outer.
In other words, the operators $S(\Theta)$ described in the preceding
lemma are of class $C_{10}$. This is in agreement with Theorem \ref{thm:spectral-picture-for-confluence}(6).
\begin{prop}
\label{pro:S(theta)-prec-confluenT}Assume that $T$ is a completely
nonunitary contraction such that $H^{\infty}(T)$ is confluent.
\begin{enumerate}
\item There exists an inner function $\Theta=\left[\begin{array}{c}
\theta_{1}\\
\theta_{2}\end{array}\right]$ such that $S(\Theta)\prec T$, and $H^{\infty}(S(\Theta))$ is confluent.
\item We have $S\prec T$ if and only if $T$ has a cyclic vector.
\end{enumerate}
\end{prop}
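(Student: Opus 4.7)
Fix throughout a nonzero vector $h_{0}\in\ker T^{*}$; this subspace is one-dimensional by Theorem \ref{thm:spectral-picture-for-confluence}. The plan is to prove (1) first via a Beurling--Lax construction, and then to deduce (2) by a direct argument using the cyclic vector.

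For part (1), pair $h_{0}$ with an auxiliary vector $h\in\mathcal{H}$. By Proposition \ref{pro:confluent=00003Drsc+noKer} there exist nonzero $u,v\in H^{\infty}$ with $v(T)h=u(T)h_{0}$, and after canceling any common inner factor we may assume $u\wedge v=1$. Applying the Beurling--Lax theorem to the shift-invariant subspace of $H^{2}\oplus H^{2}$ generated by $(u,v)^{T}$ yields an inner column $\Theta=[\theta_{1},\theta_{2}]^{T}$ and an outer $\phi\in H^{\infty}$ with $(u,v)^{T}=\phi\Theta$; automatically $\theta_{1}\wedge\theta_{2}=1$, so $H^{\infty}(S(\Theta))$ is confluent by Lemma \ref{lem:S(Theta)-confluent}. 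Since $\phi(T)$ is injective by confluence, the identity descends to $\theta_{2}(T)h=\theta_{1}(T)h_{0}$. I would then define $Q:\mathcal{H}(\Theta)\to\mathcal{H}$ by the densely defined prescription $p\oplus q\mapsto p(T)h_{0}-q(T)h$: the relation $\theta_{1}(T)h_{0}=\theta_{2}(T)h$ ensures $Q$ annihilates $\Theta H^{2}$, the intertwining $QS(\Theta)=TQ$ is routine, and injectivity is verified by observing that $p(T)h_{0}=q(T)h$ combined with $v(T)h=u(T)h_{0}$ gives $(pv-qu)(T)h_{0}=0$, forcing $pv=qu$ by confluence, and then $u\wedge v=1$ yields $(p,q)\in\Theta\cdot H^{\infty}$. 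Density of the range needs $\{h_{0},h\}$ to be jointly $T$-cyclic, which is arranged by choosing $h$ to project onto a cyclic vector for the compression $T_{\mathcal{M}_{0}^{\perp}}$ (with $\mathcal{M}_{0}$ the $T$-cyclic span of $h_{0}$); such a vector exists because this compression is quasisimilar (hence cyclic) to some $S(m)$ by Theorem \ref{thm:spectral-picture-for-confluence}(4).

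For part (2), the implication $S\prec T\Rightarrow T$ cyclic is immediate: if $Q:H^{2}\to\mathcal{H}$ is the quasiaffinity, then $\bigvee\{T^{n}Q(1):n\ge0\}=\overline{Q(H^{2})}=\mathcal{H}$, so $Q(1)$ is cyclic. For the converse, assume $T$ has a cyclic vector $k$. Adapting the argument of Corollary \ref{cor:v-always-outer} (now with $k$ playing the role of $h_{0}$) one shows that $h/k\in N_{+}$ for every $h\in\mathcal{H}$: if the denominator $v$ in $v(T)h=u(T)k$ with $u\wedge v=1$ had a nontrivial inner factor $m$, then $u(T)k\in\overline{m(T)\mathcal{H}}$ would force the cyclic span of $u(T)k$, equal to $\overline{(\text{inner part of }u)(T)\mathcal{H}}$ by Theorem \ref{thm:spectral-picture-for-confluence}(4), to lie in $\overline{m(T)\mathcal{H}}$, contradicting $u\wedge v=1$ via the cyclicity of $k$. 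The outer-denominator property then enables the densely defined intertwining $\phi\mapsto\phi(T)k$---injective and having dense range by cyclicity of $k$---to be extended boundedly from $H^{2}$ to $\mathcal{H}$, giving the desired quasiaffinity $S\prec T$.

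The principal technical obstacle in both parts is the boundedness of the densely defined intertwining maps: von Neumann's inequality yields only $\|\phi(T)\|\le\|\phi\|_{\infty}$, whereas an $\|\cdot\|_{H^{2}}$-bound is required to extend the maps to the full spaces $\mathcal{H}(\Theta)$ or $H^{2}$. Establishing the needed boundedness is where the fine information about $T$ from Theorem \ref{thm:spectral-picture-for-confluence}---in particular the $\mathcal{B}_{1}(\mathbb{D})$ structure of $T^{*}$ and the eigenvector family $f(\lambda)$ of Proposition \ref{propo-former-lem:bi-orthogonal}---should enter essentially, through a comparison between the reproducing kernel $\langle f(\mu),f(\lambda)\rangle$ of the model and that of $H^{2}$.
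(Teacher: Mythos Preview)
Your proposal contains a genuine gap, which you correctly identify but do not close: the boundedness of the intertwining maps. In part (2) the gap is fatal as stated. Take $T=S$ itself and let $k\in H^{2}$ be any outer function not in $H^{\infty}$; then $k$ is cyclic for $S$, and trivially $h/k\in N_{+}$ for every $h\in H^{2}$, yet the map $\phi\mapsto\phi(S)k=\phi k$ is multiplication by $k$ and is unbounded in the $H^{2}$ norm. Thus the outer-denominator property does not by itself yield the estimate you need, and your suggested route via the reproducing kernel $\langle f(\mu),f(\lambda)\rangle$ is not adequate either: that kernel governs the Hilbert-space geometry of $\mathcal{H}$, not the norm of $\phi\mapsto\phi(T)k$ for a particular (and possibly ill-chosen) cyclic vector $k$. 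The same obstruction applies to part (1): the densely defined map $p\oplus q\mapsto p(T)h_{0}-q(T)h$ has no reason to extend boundedly to $H^{2}\oplus H^{2}$ for your specific $h_{0},h$, so it cannot be pushed down to $\mathcal{H}(\Theta)$.

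The paper sidesteps this difficulty entirely by working inside the minimal isometric dilation $U_{+}\in\mathcal{L}(\mathcal{K}_{+})$ of $T$. Since $T$ is of class $C_{10}$ by Theorem~\ref{thm:spectral-picture-for-confluence}(6), $U_{+}$ is a pure unilateral shift. One chooses $h_{1},h_{2}\in\mathcal{H}$ jointly cyclic for $T$ exactly as you do, but then forms the $U_{+}$-invariant subspace $\mathcal{E}=\bigvee_{n\ge0}\{U_{+}^{n}h_{1},U_{+}^{n}h_{2}\}\subset\mathcal{K}_{+}$ rather than attempting a direct map from a model space. The intertwiner is simply the orthogonal projection $Y=P_{\mathcal{H}}|\mathcal{E}$, bounded for free; factoring out $\ker Y$ produces a quasiaffinity from the compression $T'$ of $U_{+}|\mathcal{E}$ to $T$, and confluence of $H^{\infty}(T')$ (inherited from $T$ via Proposition~\ref{propolema:quasiaffine-and-stuff}) rules out the possibilities $S\oplus S$, $S(m)$, and the $C_{0}$ compressions of $S\oplus S$, forcing $T'$ to be an $S(\Theta)$ of the required type. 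For part (2) one takes $h_{2}=0$; then $U_{+}|\mathcal{E}$ has multiplicity one, and the same confluence argument gives $\ker Y=\{0\}$, so $S\prec T$ directly. The moral is that the dilation supplies the needed $H^{2}$-to-$\mathcal{H}$ boundedness automatically, whereas your direct construction would have to manufacture it by hand---and, as the example above shows, cannot do so without a more careful choice of generators.
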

\begin{proof}
Denote by $U_{+}\in\mathcal{L}(\mathcal{K}_{+})$ the minimal isometric
dilation of $T$. Thus $\mathcal{H}\subset\mathcal{K}_{+}$ and $TP_{\mathcal{H}}=P_{\mathcal{H}}U_{+}$.
Since $T\in C_{10}$, the operator $U_{+}$ is a unilateral shift.
Let us set $\mathcal{M}=\bigvee\{T^{n}h_{1}:n\ge0\}$, where $h_{1}\in\mathcal{H}\setminus\{0\}$,
and let $h_{2}\in\mathcal{H}\ominus\mathcal{M}$ be a cyclic vector
for the compression of $T$ to this subspace. Such a vector exists
by Theorem \ref{thm:spectral-picture-for-confluence}(4). Observe
that $\mathcal{H}=\bigvee\{T^{n}h_{1},T^{n}h_{2}:n\ge0\}$. We define
now a space\[
\mathcal{E}=\bigvee\{U_{+}^{n}h_{1},U_{+}^{n}h_{2}:n\ge0\}\]
and an operator $Y\in\mathcal{L}(\mathcal{E},\mathcal{H})$ by setting
$Y=P_{\mathcal{H}}|\mathcal{E}$. The space $\mathcal{E}$ is invariant
for $U_{+}$, $Y(U_{+}|\mathcal{E})=TY$, and $Y$ has dense range.
Moreover, the restriction $U_{+}|\mathcal{E}$ is a unilateral shift
of multiplicity 1 or 2. Finally, set $\mathcal{H}'=\mathcal{E}\ominus\ker Y$,
$X=Y|\mathcal{H}'$, and denote by $T'$ the compression of $U_{+}$
to the space $\mathcal{H}'$. Then $X\mathcal{H}'=Y\mathcal{E}$ so
that $X$ is a quasiaffinity, and $XT'=TX$. Thus we have $T'\prec T$
and hence $H^{\infty}(T')$ is confluent by Proposition \ref{propolema:quasiaffine-and-stuff}(3).
 To conclude the proof, we need to show that $T'$ is unitarily equivalent
to an operator of the form $S(\Theta)$, where $\Theta=\left[\begin{array}{c}
\theta_{1}\\
\theta_{2}\end{array}\right]$ is an inner function. Equivalently, we must show that any compression
$T'$ of $S$ or of $S\oplus S$ to the orthogonal complement of an
invariant subspace is of this form provided that $H^{\infty}(T')$
is confluent. The compressions of $S$ are either $S$ itself, or
operators of the form $S(m)$. Among these only $S$ is confluent,
and it is of the form $S(\Theta)$ for $\Theta=\left[\begin{array}{c}
1\\
0\end{array}\right]$. The compressions of $S\oplus S$ are $S\oplus S$, $S(\Theta)$
with $\Theta$ an inner function of the desired form, or $S(\Theta)$
with $\Theta$ a $2\times2$ inner function. The compressions corresponding
to $2\times2$ matrices are operators of class $C_{0}$ (see \cite[Section VII.2]{Sz-N-F}),
and hence they do not generate confluent algebras. Finally, $H^{\infty}(S\oplus S)$
is not confluent as can be seen by considering the vectors $1\oplus0$
and $0\oplus1$.

If $T$ has a cyclic vector $h_{1}$, we can take $h_{2}=0$, and
then $U_{+}|\mathcal{E}$ is a shift of multiplicity 1. In this case,
we must have $\ker Y=\{0\}$ so that $U_{+}|\mathcal{E}\prec T$.
Conversely, $S\prec T$ implies that $T$ has a cyclic vector since
$S$ has one.
\end{proof}
The argument in the preceding proof appeared earlier in the classification
of contractions of class $C_{\cdot0}$ \cite{model-c-dot-0,injections},
and even earlier in \cite{Foias} and in the study of the class $C_{0}$
\cite{modele-de-j}.

When $T$ has a cyclic vector, it is natural to ask under what conditions
we actually have $T\sim S$.
\begin{lem}
\label{lem:cyclic-iff-restrictions-are}Assume that $T$ is a completely
nonunitary contraction such that $H^{\infty}(T)$ is confluent. The
following assertions are equivalent:
\begin{enumerate}
\item $T\prec S$.
\item $T|\mathcal{M}\prec S$ for some invariant subspace $\mathcal{M}$
of $T$.
\item $T|\mathcal{M}\prec S$ for every nonzero invariant subspace $\mathcal{M}$
of $T$.
\end{enumerate}
\end{lem}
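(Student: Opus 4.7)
The plan is to verify the cycle of implications $(1) \Rightarrow (3) \Rightarrow (2) \Rightarrow (1)$. The middle implication is immediate: $\mathcal{H}$ itself is a nonzero invariant subspace for $T$, so $(3)$ applied with $\mathcal{M}=\mathcal{H}$ yields $T\prec S$, and trivially $(1)\Rightarrow(2)$ with the same choice. The two substantive implications both rely on the structure theorem for invariant subspaces of $T$, namely Theorem \ref{thm:spectral-picture-for-confluence}(4), together with Beurling's theorem applied to invariant subspaces of $S$.

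For $(2)\Rightarrow(1)$, suppose $\mathcal{M}$ is a nonzero invariant subspace for $T$ and that $R:\mathcal{M}\to H^{2}$ is a quasiaffinity with $R(T|\mathcal{M})=SR$. Theorem \ref{thm:spectral-picture-for-confluence}(4) gives an inner function $m$ with $\mathcal{M}=\overline{m(T)\mathcal{H}}$. Since $H^{\infty}(T)$ is confluent, $m(T)$ is injective, and viewed as an operator from $\mathcal{H}$ to $\mathcal{M}$ it is a quasiaffinity that intertwines $T$ with $T|\mathcal{M}$. The composition $R\circ m(T):\mathcal{H}\to H^{2}$ is then a quasiaffinity intertwining $T$ with $S$, so $T\prec S$.

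For $(1)\Rightarrow(3)$, fix a quasiaffinity $Q:\mathcal{H}\to H^{2}$ with $QT=SQ$, and let $\mathcal{M}$ be a nonzero invariant subspace for $T$. Then $Q\mathcal{M}$ is a linear manifold in $H^{2}$, nonzero (since $Q$ is injective and $\mathcal{M}\ne\{0\}$) and invariant for $S$. Its closure is a nonzero closed $S$-invariant subspace of $H^{2}$, so by Beurling's theorem
\[
\overline{Q\mathcal{M}}=\theta H^{2}
\]
for some inner function $\theta$. Let $V_{\theta}:H^{2}\to\theta H^{2}$ denote multiplication by $\theta$, which is an isometric isomorphism intertwining $S$ with $S|\theta H^{2}$. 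Define $R=V_{\theta}^{-1}\circ(Q|\mathcal{M}):\mathcal{M}\to H^{2}$. This is bounded, injective, and has dense range in $H^{2}$ because $Q\mathcal{M}$ is dense in $\theta H^{2}$. A direct check gives $R(T|\mathcal{M})=V_{\theta}^{-1}SQ|\mathcal{M}=SV_{\theta}^{-1}Q|\mathcal{M}=SR$, so $T|\mathcal{M}\prec S$.

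The main obstacle is the identification step in $(1)\Rightarrow(3)$: one must see that the closure of $Q\mathcal{M}$ is of the form $\theta H^{2}$ rather than some arbitrary invariant subspace, and then absorb the factor $\theta$ via the isometric isomorphism $V_{\theta}$ so that the resulting intertwiner lands in $H^{2}$ and witnesses quasiaffine domination by $S$ itself, not merely by its restriction to $\theta H^{2}$. Everything else is a routine composition of quasiaffinities.
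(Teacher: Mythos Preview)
Your proof is correct and follows essentially the same route as the paper's: both arguments use Theorem~\ref{thm:spectral-picture-for-confluence}(4) to factor through $m(T)$ for the implication $(2)\Rightarrow(1)$, and both use Beurling's theorem (which the paper invokes implicitly by saying that $S|\overline{Y\mathcal{M}}$ is unitarily equivalent to $S$) for $(1)\Rightarrow(3)$. Your version simply spells out the Beurling step and the unitary $V_{\theta}$ in more detail.
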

\begin{proof}
The implications $(3)\Rightarrow(1)\Rightarrow(2)$ are obvious. Next
we show that $T\prec T|\mathcal{M}$ for every nonzero invariant subspace
$\mathcal{M}$ of $T$. By Theorem \ref{thm:spectral-picture-for-confluence}(4),
there is an inner function $m$ such that $\overline{m(T)\mathcal{H}}=\mathcal{M}$.
Then the operator $X:\mathcal{H}\to\mathcal{M}$ defined by $Xh=m(T)h$,
$h\in\mathcal{H},$ is a quasiaffinity and $XT=(T|\mathcal{M})X$.
Using this fact, it is easy to show that $(2)\Rightarrow(1)$. Indeed,
if (2) holds we have $T|\mathcal{M}\prec S$ for some $\mathcal{M}$,
and the relations $T\prec T|\mathcal{M}\prec S$ imply the desired
conclusion $T\prec S$. Finally, we prove that $(1)\Rightarrow(3)$.
Assume that (1) holds, so that $YT=SY$ for some quasiaffinity $Y$.
If $\mathcal{M}$ is a nonzero invariant subspace for $T$, the operator
$Z=Y|\mathcal{M}:\mathcal{M}\to\overline{Y\mathcal{M}}$ is a quasiaffinity
realizing the relation $T|\mathcal{M}\prec S|\overline{Y\mathcal{M}}$.
We conclude that (3) is true since $S|\overline{Y\mathcal{M}}$ is
unitarily equivalent to $S$.
\end{proof}
We can now state some conditions equivalent to the relation $T\sim S$.
\begin{thm}
\label{thm:TsimS-iff-exists-b}Assume that $T$ is a completely nonunitary
contraction such that $H^{\infty}(T)$ is confluent and it has a cyclic
vector. Let $f:\mathbb{D}\to\mathcal{H}$ be an analytic function
such that $\|f(0)\|=1$ and $\ker(\lambda I-T^{*})=\mathbb{C}f(\lambda)$
for every $\lambda\in\mathbb{D}$, and denote $\mathcal{H}_{0}=\bigvee\{T^{n}f(0):n\ge0\}$.
The following conditions are equivalent:
\begin{enumerate}
\item $T\sim S$.
\item $T|\mathcal{H}_{0}\prec S$.
\item There exists an outer function $b\in H^{\infty}$ such that $b(h/f(0))\in H^{2}$
for every $h\in\mathcal{H}_{0}$.
\item There exists an outer function $b\in H^{\infty}$ such that \[
b\frac{\langle h,f(\bar{\lambda})\rangle}{\langle f(0),f(\bar{\lambda})\rangle}\in H^{2}\]
for every $h\in\mathcal{H}_{0}$.
\end{enumerate}
\end{thm}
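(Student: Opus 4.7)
The plan is to split the cycle of equivalences into three easy parts and one substantive one. For (3)$\Leftrightarrow$(4), I would verify the identity
\[
\frac{h}{f(0)}(\lambda) \;=\; \frac{\langle h, f(\bar\lambda)\rangle}{\langle f(0), f(\bar\lambda)\rangle}
\]
by pairing $v(T)h = u(T)f(0)$ with the eigenvector $f(\bar\lambda)$ of $T^*$: the resulting scalar identity $v(\lambda)\langle h,f(\bar\lambda)\rangle = u(\lambda)\langle f(0),f(\bar\lambda)\rangle$ gives the formula wherever the denominator is nonzero, and analyticity in $\lambda$ extends it everywhere. Thus the quantities multiplied by $b$ in (3) and (4) are literally the same element of $N_+$. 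For (1)$\Leftrightarrow$(2), Lemma \ref{lem:cyclic-iff-restrictions-are} already gives $T \prec S \Leftrightarrow T|\mathcal{H}_0 \prec S$, and the reverse relation $S \prec T$ needed to upgrade this to $T \sim S$ is supplied by Proposition \ref{pro:S(theta)-prec-confluenT}(2) from the standing hypothesis that $T$ has a cyclic vector.

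The content lies in (2)$\Leftrightarrow$(3). For (2)$\Rightarrow$(3), I would start from a quasiaffinity $X : \mathcal{H}_0 \to H^2$ with $X(T|\mathcal{H}_0) = SX$, extend it to $Xu(T|\mathcal{H}_0) = T_u X$ by the $H^\infty$ functional calculus, set $g = Xf(0)$, and push $v(T)h = u(T)f(0)$ through $X$ to obtain $v \cdot Xh = u \cdot g$, hence $Xh = (h/f(0)) \cdot g$ in $H^2$. The image $X\mathcal{H}_0$ is contained in the closed $S$-cyclic subspace generated by $g$, and density forces $g$ to be outer in $H^2$. I would then let $b$ be the outer function in $H^\infty$ with $|b(\zeta)| = \min(1,|g(\zeta)|)$ almost everywhere, whose existence reduces to $\log\min(1,|g|) \in L^1(\mathbb{T})$. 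For each $h \in \mathcal{H}_0$, the product $b \cdot (h/f(0))$ lies in $N_+$ with modulus dominated a.e. by $|g \cdot (h/f(0))| = |Xh| \in L^2(\mathbb{T})$, and Smirnov's theorem (an $N_+$-function with square-integrable boundary values is in $H^2$) delivers (3).

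For (3)$\Rightarrow$(2), I would define $X : \mathcal{H}_0 \to H^2$ by $Xh = b \cdot (h/f(0))$. The relation $(Th)/f(0) = z \cdot (h/f(0))$ gives the intertwining $XT = SX$; injectivity follows from Lemma \ref{lem:u/v=00003Dh/h0} since $b \not\equiv 0$; dense range follows because $XT^n f(0) = bz^n$ and the closed $S$-cyclic subspace generated by the outer function $b$ is all of $H^2$. The only nonroutine step is boundedness, which I would obtain from the closed graph theorem: if $h_n \to h$ in $\mathcal{H}_0$ and $Xh_n \to \phi$ in $H^2$, the ratio identity of the first paragraph and continuity of $\langle \,\cdot\,, f(\bar\lambda)\rangle$ yield pointwise convergence $(Xh_n)(\lambda) \to (Xh)(\lambda)$ on a neighborhood of $0$, while continuity of $H^2$-evaluation at $\lambda$ gives $(Xh_n)(\lambda) \to \phi(\lambda)$; analytic continuation forces $\phi = Xh$.

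The hardest step will be the construction of $b$ in (2)$\Rightarrow$(3): it needs both the recognition that $g$ is outer (via the $S$-cyclic structure of the range of $X$) and Smirnov's function-theoretic passage from $N_+ \cap L^2$ to $H^2$. The rest of the argument is essentially bookkeeping around Lemma \ref{lem:u/v=00003Dh/h0}, the closed graph theorem, and the already-established Lemma \ref{lem:cyclic-iff-restrictions-are} and Proposition \ref{pro:S(theta)-prec-confluenT}.
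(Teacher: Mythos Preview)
Your argument is correct, and for $(1)\Leftrightarrow(2)$ and $(3)\Leftrightarrow(4)$ it matches the paper almost verbatim. The substantive difference is in $(2)\Leftrightarrow(3)$. The paper does not work directly with $g=Xf(0)$; instead it first builds a quasiaffinity $Y:H^{2}\to\mathcal{H}_{0}$ from the minimal isometric dilation (using that $U_{+}^{*}f(0)=T^{*}f(0)=0$ to identify the cyclic $U_{+}$-space of $f(0)$ unitarily with $H^{2}$), with the property $(Yx)/f(0)=x$. Then $XY$ is a quasiaffinity in $\{S\}'$, hence equals $b(S)$ for an outer $b\in H^{\infty}$ immediately, and $YX=b(T)$ follows; this yields $b\,(h/f(0))=Xh$ in one stroke and, in the converse direction, makes the quasiaffinity property of $X$ automatic from $XY=b(S)$, $YX=b(T)$.

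Your route trades the dilation machinery for two function-theoretic inputs: the outerness of $g$ (which you correctly extract from Corollary~\ref{cor:v-always-outer} applied to $T|\mathcal{H}_{0}$, so that each $h/f(0)\in N_{+}$ forces $Xh$ into the inner part of $g$ times $H^{2}$), and Smirnov's theorem to descend from the outer $g\in H^{2}$ to a bounded outer $b$ with the same effect. This is a legitimate and self-contained alternative; the paper's approach is shorter because it never needs Smirnov and gets boundedness of $b$ for free, while yours is more elementary in that it avoids the isometric dilation altogether and makes the closed-graph verification of boundedness explicit rather than asserting it.
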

\begin{proof}
Since $T$ has a cyclic vector, we have $S\prec T$ by Proposition
\ref{pro:S(theta)-prec-confluenT}(2). Therefore $T\sim S$ is equivalent
to $T\prec S$, and this is equivalent to condition (2) by Lemma \ref{lem:cyclic-iff-restrictions-are}.
This establishes the equivalence $(1)\Leftrightarrow(2)$.

For an arbitrary $h\in\mathcal{H}\setminus\{0\}$, write the function
$h/f(0)$ as a quotient $u/v$ of functions in $H^{\infty}$. We have
then \[
\langle v(T)h,f(\bar{\lambda})\rangle=\langle h,v(T)^{*}f(\bar{\lambda})\rangle=\langle h,\overline{v(\lambda)}f(\bar{\lambda})\rangle=v(\lambda)\langle h,f(\bar{\lambda})\rangle,\]
and analogously $\langle u(T)f(0),f(\bar{\lambda})\rangle=u(\lambda)\langle f(0),f(\bar{\lambda})\rangle$.
Since $v(T)h=u(T)f(0)$, we conclude that\[
b(\lambda)\frac{h}{f(0)}(\lambda)=b(\lambda)\frac{\langle h,f(\bar{\lambda})\rangle}{\langle f(0),f(\bar{\lambda})\rangle}\]
for those $\lambda$ for which the denominators do not vanish. This
proves the equivalence $(3)\Leftrightarrow(4)$. Note that the analytic
function $\langle f(0),f(\bar{\lambda})\rangle$ cannot be identically
zero by Theorem \ref{thm:spectral-picture-for-confluence}(3).

It remains to prove the equivalence $(2)\Leftrightarrow(3)$, and
for this purpose we may as well assume that $\mathcal{H}=\mathcal{H}_{0}$.
We apply the construction in the proof of Proposition \ref{pro:S(theta)-prec-confluenT}
for this particular case. Thus, consider the minimal isometric dilation
$U_{+}\in\mathcal{K}_{+}$ of $T$, and denote $\mathcal{E}=\bigvee\{U_{+}^{n}f(0):n\ge0\}$.
Since $U_{+}^{*}f(0)=T^{*}f(0)=0$, there exists a unitary operator
$W:H^{2}\to\mathcal{E}$ such that $W1=f(0)$ and $WS=(U_{+}|\mathcal{E})W$.
One can then construct a quasiaffinity $Y:H^{2}\to\mathcal{H}$, namely
$Y=P_{\mathcal{H}}W$, such that $TY=YS$ and $Y1=f(0)$. Since an
equality of the form $v(S)x=u(S)1$ for $x\in H^{2}$ is equivalent
to $v(S)Yx=u(S)f(0)$, we deduce that \[
\frac{Yx}{f(0)}=x,\quad x\in H^{2},\]
and, conversely, any vector $h\in\mathcal{H}$ such that $k=h/f(0)\in H^{2}$
must belong to $YH^{2}$, namely $h=Yk$.

With this preparation, assume that $(2)$ holds, and let $X\in\mathcal{L}(\mathcal{H},H^{2})$
be a quasiaffinity such that $XT=SX$. Then the operator $XY$ is
a quasiaffinity in the commutant of $S$, and therefore $XY=b(S)$
for some outer function $b\in H^{\infty}$. The equality\[
X(b(T)-YX)=b(S)X-(XY)X=0\]
implies that we also have $YX=b(T)$. For any $h\in\mathcal{H}\setminus\{0\}$
we have then\[
b\frac{h}{f(0)}=\frac{b(T)h}{f(0)}=\frac{YXh}{Y1}=Xh\in H^{2},\]
thus proving (3). Conversely, if $(3)$ holds, we can define a linear
map $X:\mathcal{H}\to H^{2}$ by setting $Xh=b(h/f(0))$ for $h\in\mathcal{H}$,
and this map obviously satisfies $XT=SX$. It is easy to verify that
$X$ is a closed linear transformation and hence it is continuous.
It is also immediate that $XY=b(S)$ and $YX=b(T)$, and this implies
that $X$ is a quasiaffinity since $b$ is outer.\end{proof}
\begin{cor}
\label{cor:f(0)-vs-f(lamb)}Assume that $T\in\mathcal{L}(\mathcal{H})$
is a completely nonunitary contraction such that $T\sim S$. Let $f:\mathbb{D}\to\mathcal{H}$
be an analytic function such that $\|f(0)\|=1$ and $\ker(\lambda I-T^{*})=\mathbb{C}f(\lambda)$
for every $\lambda\in\mathbb{D}$, and assume that $\mathcal{H}=\bigvee\{T^{n}f(0):n\ge0\}$.
Then $\langle f(0),f(\lambda)\rangle\ne0$ for every $\lambda\in\mathbb{D}$.\end{cor}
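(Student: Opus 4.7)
The plan is to exploit the intertwining $T \sim S$ at the level of eigenvectors of the adjoints. Since $T \sim S$, the proof of Theorem \ref{thm:TsimS-iff-exists-b} provides a quasiaffinity $Y\colon H^2 \to \mathcal{H}$ with $TY = YS$ and $Y\cdot 1 = f(0)$. Taking Hilbert space adjoints gives $Y^* T^* = S^* Y^*$, and since $Y$ is a quasiaffinity so is $Y^*$; in particular $Y^*$ is injective.

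Next I would track how $Y^*$ moves the eigenvectors. For each $\lambda \in \mathbb{D}$ we have $T^* f(\lambda) = \lambda f(\lambda)$, hence
\[
S^* Y^* f(\lambda) = Y^* T^* f(\lambda) = \lambda\, Y^* f(\lambda),
\]
so $Y^* f(\lambda) \in \ker(\lambda I - S^*)$. A direct calculation identifies this kernel with the one-dimensional span of the Szeg\H o kernel $k_{\bar\lambda}(z) = (1 - \lambda z)^{-1}$. Thus there is a scalar $c(\lambda)$ with $Y^* f(\lambda) = c(\lambda) k_{\bar\lambda}$, and since $f(\lambda) \neq 0$ and $Y^*$ is injective, we have $c(\lambda) \neq 0$ for every $\lambda \in \mathbb{D}$.

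Finally, using the reproducing property $\langle 1, k_{\bar\lambda}\rangle = 1$ together with $Y \cdot 1 = f(0)$,
\[
\langle f(0), f(\lambda)\rangle = \langle Y\cdot 1, f(\lambda)\rangle = \langle 1, Y^* f(\lambda)\rangle = \overline{c(\lambda)}\,\langle 1, k_{\bar\lambda}\rangle = \overline{c(\lambda)} \neq 0,
\]
which is exactly the conclusion. I expect no real obstacle here; the only subtlety is invoking the particular quasiaffinity from the proof of Theorem \ref{thm:TsimS-iff-exists-b} so that $Y \cdot 1 = f(0)$. If one prefers to avoid reaching into that proof, the same conclusion is obtained by observing that $T^* Y\cdot 1 = Y S^* 1 = 0$, so $Y\cdot 1$ lies in $\ker T^* = \mathbb{C} f(0)$ (one-dimensional by Lemma \ref{lem:0-in-spectrum-ofT}), and $Y\cdot 1 \neq 0$ by injectivity of $Y$; hence $Y\cdot 1$ is a nonzero multiple of $f(0)$ and the computation above gives the nonvanishing of $\langle f(0), f(\lambda)\rangle$ up to a nonzero scalar.
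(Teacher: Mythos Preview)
Your main argument is correct and provides a clean alternative to the paper's proof. The paper argues by contradiction using condition~(4) of Theorem~\ref{thm:TsimS-iff-exists-b}: if $\langle f(0),f(\bar\lambda_0)\rangle=0$, then since $b(\lambda)\langle h,f(\bar\lambda)\rangle/\langle f(0),f(\bar\lambda)\rangle$ lies in $H^2$ (hence is analytic at $\lambda_0$) and $b(\lambda_0)\ne0$, one concludes $\langle h,f(\bar\lambda_0)\rangle=0$ for every $h\in\mathcal{H}$, forcing $f(\bar\lambda_0)=0$. Your route instead pulls the eigenvectors back via $Y^*$ to the explicit Szeg\H o kernels in $H^2$ and reads off the inner product directly. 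Both proofs borrow from the proof of Theorem~\ref{thm:TsimS-iff-exists-b}; yours uses the quasiaffinity $Y$, the paper's uses the outer function $b$. Your approach has the virtue of actually identifying $\langle f(0),f(\lambda)\rangle$ with a concrete nonzero quantity, namely $\overline{c(\lambda)}$.

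One correction, however: the alternative you sketch at the end is flawed. From $TY=YS$ one obtains $Y^*T^*=S^*Y^*$ upon taking adjoints, \emph{not} $T^*Y=YS^*$; the claimed identity $T^*Y\cdot1=YS^*1$ is therefore unjustified, and it fails already for $T=S$, $Y=S$, where $Y\cdot1=z\notin\ker S^*$. So if you wish to avoid citing the specific $Y$ with $Y\cdot1=f(0)$, this detour does not work. You really do need that particular quasiaffinity, which is fine: it is constructed explicitly in the proof of Theorem~\ref{thm:TsimS-iff-exists-b} under exactly the cyclicity hypothesis $\mathcal{H}=\bigvee_{n\ge0}T^nf(0)$ assumed here.
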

\begin{proof}
Let $b$ be an outer function satisfying condition (4) of Theorem
\ref{thm:TsimS-iff-exists-b}. Assume that $\langle f(0),f(\bar{\lambda})\rangle=0$
for some $\lambda\in\mathbb{D}$. Since $b(\lambda)\ne0$, it follows
that $\langle h,f(\bar{\lambda})\rangle=0$ for every $h\in\mathcal{H}$,
and therefore $f(\bar{\lambda})=0$, which is impossible since this
vector generates $\ker(\lambda I-T)^{*}$.
\end{proof}
The conclusion of this corollary is not true for arbitrary contractions
$T$ for which $H^{\infty}(T)$ is confluent. Consider for instance
the function \[
\Theta=\left[\begin{array}{c}
\theta_{1}\\
\theta_{2}\end{array}\right],\]
where $\theta_{1}(z)=3z/5$ and $\theta_{2}(z)=4(2z-1)/(5(2-z))$
for $z\in\mathbb{D}$. We have $\theta_{1}(S)^{*}1=0$ and $\theta_{2}(S)^{*}x=x/2$
with $x(z)=1/(2-z)$, $z\in\mathbb{D}$. It follows easily that $\ker S(\Theta)^{*}$
is generated by $1\oplus0$, while $\ker(\frac{1}{2}I-S(\Theta))^{*}$
is generated by $0\oplus x$. For this example we have therefore $\langle f(0),f(\frac{1}{2})\rangle=0$.
According to the preceding corollary, $\mathcal{H}_{0}=\bigvee\{S(\Theta)^{n}f(0):n\ge0\}$
must be a proper subspace of $\mathcal{H}(\Theta)$. It is easy to
verify that $\mathcal{H}(\Theta)\ominus\mathcal{H}_{0}$ is precisely
$\mathbb{C}f(\frac{1}{2})$.

The relation $T\prec S$ can also be studied in terms of the minimal
unitary dilation of $T$. We will denote by $R_{*}\in\mathcal{L}(\mathcal{R}_{*})$
the $*$-residual part of this minimal unitary dilation; see \cite[Section II.3]{Sz-N-F}
for the relevant definitions. Note however that our $R_{*}$ is the
adjoint of the one considered there. The facts we require about this
operator are as follows:
\begin{enumerate}
\item [(a)]$R_{*}$ is a unitary operator with absolutely continuous spectral
measure relative to arclength measure on $\mathbb{T}$.
\item [(b)]There exists an operator $Z:\mathcal{H}\to\mathcal{R}_{*}$
(namely, the orthogonal projection onto $\mathcal{R}_{*}$) such that
$ZT=R_{*}Z$ and \[
\|Zh\|=\lim_{n\to\infty}\|T^{n}h\|.\]
In particular, $Z$ is injective if and only if $T$ is of class $C_{1\cdot}$.
\item [(c)]The smallest reducing subspace for $R_{*}$ containing $Z\mathcal{H}$
is $\mathcal{R}_{*}$.\end{enumerate}
\begin{prop}
\label{pro:conf-implies-R*cyclic}Assume that $T$ is a completely
nonunitary contraction such that $H^{\infty}(T)$ is confluent.
\begin{enumerate}
\item The $*$-residual part $R_{*}$ of the minimal unitary dilation of
$T$ has spectral multiplicity at most $1$.
\item We have $T\prec S$ if and only if $R_{*}$ is a bilateral shift of
multiplicity $1$.
\item We have $T\prec R_{*}|\overline{Z\mathcal{H}}$, and $T\prec S$ if
and only if $\overline{Z\mathcal{H}}\ne\mathcal{R}_{*}$.
\item $T^{*}$ has a cyclic vector.
\end{enumerate}
\end{prop}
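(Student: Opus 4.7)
The plan is to handle the assertions roughly in the order (1), (3), (2), (4). The main tool is a rationally strictly cyclic vector for $H^{\infty}(T)$, guaranteed by Proposition \ref{pro:confluent=00003Drsc+noKer}.

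For (1), fix such a rationally strictly cyclic vector $h_{0}\in\mathcal{H}$ and I will show that $Zh_{0}$ is cyclic for $R_{*}$. Given $h\in\mathcal{H}$, pick $u,v\in H^{\infty}$ with $v(T)$ injective and $v(T)h=u(T)h_{0}$; the intertwining $Zf(T)=f(R_{*})Z$ (which comes from $ZT=R_{*}Z$ by passing from polynomials to $H^{\infty}$ in the Sz.-Nagy--Foias calculus) gives $v(R_{*})Zh=u(R_{*})Zh_{0}$. Let $\mathcal{C}$ denote the reducing subspace of $\mathcal{R}_{*}$ generated by $Zh_{0}$, with orthogonal projection $P$. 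Since $R_{*}$ is absolutely continuous and $v\ne 0$ has nonzero boundary values a.e.\ on $\mathbb{T}$, $v(R_{*})$ is injective; since $P$ lies in the abelian von Neumann algebra of $R_{*}$ it commutes with $v(R_{*})$. Applying $P$ to the identity and using $PZh_{0}=Zh_{0}$ yields $v(R_{*})(Zh-PZh)=0$, forcing $Zh\in\mathcal{C}$. Hence $Z\mathcal{H}\subset\mathcal{C}$, and property (c) forces $\mathcal{C}=\mathcal{R}_{*}$.

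For (3), the injectivity of $Z$ (from $T\in C_{10}\subset C_{1\cdot}$ by Theorem \ref{thm:spectral-picture-for-confluence}(6) and property (b)) together with $ZT=R_{*}Z$ makes $T\prec R_{*}|_{\overline{Z\mathcal{H}}}$ immediate. For the $\Rightarrow$ direction of the equivalence, assume $YT=SY$ with $Y$ a quasiaffinity; the bound $\|Yh\|=\|YT^{n}h\|\le\|Y\|\|T^{n}h\|$, combined with the monotone decrease $\|T^{n}h\|\downarrow\|Zh\|$, gives $\|Yh\|\le\|Y\|\|Zh\|$, so the formula $W(Zh):=Yh$ extends to a bounded $W\colon\overline{Z\mathcal{H}}\to H^{2}$ with dense range satisfying $WR_{*}|_{\overline{Z\mathcal{H}}}=SW$. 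If $\overline{Z\mathcal{H}}=\mathcal{R}_{*}$, iterating $Wv=SWR_{*}^{-1}v\in SH^{2}$ (using $R_{*}$ invertible on $\mathcal{R}_{*}$) traps $W\mathcal{R}_{*}$ in $\bigcap_{n\ge 0}S^{n}H^{2}=\{0\}$, contradicting $W\ne 0$. For the $\Leftarrow$ direction, (1) lets me identify $R_{*}$ with $M_{z}$ on $\chi_{E}L^{2}$ for some measurable $E\subset\mathbb{T}$; if $|\mathbb{T}\setminus E|>0$, Szegő's theorem forces every invariant subspace of $M_{z}|_{L^{2}(E)}$ to be reducing, so (c) would give $\overline{Z\mathcal{H}}=\mathcal{R}_{*}$, contrary to hypothesis. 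Hence $E=\mathbb{T}$, Beurling gives $\overline{Z\mathcal{H}}=qH^{2}$ for a unimodular $q$, and $R_{*}|_{\overline{Z\mathcal{H}}}\cong S$ yields $T\prec S$. Assertion (2) is essentially read off from this dichotomy; the subtle part is its converse, which also requires showing, via a Smirnov-class argument (using $Zh=(u/v)(Zh_{0})$ and $N^{+}\cap L^{2}=H^{2}$), that $Zh_{0}\in H^{2}$ whenever $R_{*}$ is the bilateral shift of multiplicity one, so that $\overline{Z\mathcal{H}}\subset H^{2}\ne\mathcal{R}_{*}$.

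For (4), take $h=\sum_{j}c_{j}f(\lambda_{j})$ where $\{\lambda_{j}\}\subset\mathbb{D}$ is a convergent sequence of distinct points with limit in $\mathbb{D}$, $c_{j}>0$ are chosen so that $\sum_{j}c_{j}\|f(\lambda_{j})\|<\infty$, and $f$ is the analytic eigenvector field for $T^{*}$ from the paragraph preceding Proposition \ref{propo-former-lem:bi-orthogonal}. The orthogonal complement $\mathcal{M}$ of $\overline{\bigvee_{n\ge 0}T^{*n}h}$ is $T$-invariant; for $k\in\mathcal{M}$, the identity $\langle u(T)k,f(\mu)\rangle=u(\bar\mu)\langle k,f(\mu)\rangle$ (valid for $u\in H^{\infty}$) gives $\sum_{j}c_{j}u(\bar\lambda_{j})\langle k,f(\lambda_{j})\rangle=0$ for every $u\in H^{\infty}$. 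Interpreting this as the Cauchy transform $\phi(z)=\sum_{j}c_{j}\langle k,f(\lambda_{j})\rangle/(z-\bar\lambda_{j})$ vanishing on $|z|>1$ and hence on the connected complement of $\overline{\{\bar\lambda_{j}\}}$ by the identity theorem, the residue at each isolated pole $\bar\lambda_{j}$ vanishes, so $\langle k,f(\lambda_{j})\rangle=0$ for all $j$; Theorem \ref{thm:spectral-picture-for-confluence}(3) then forces $k=0$. The main obstacle I anticipate is the Smirnov-class analysis needed for the converse in (2), namely verifying $Zh_{0}\in H^{2}$ when $R_{*}$ is the bilateral shift of multiplicity one.
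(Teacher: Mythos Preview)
Your arguments for (1) and (3) are correct and essentially coincide with the paper's: the paper proves (1) by showing that any two vectors $Zh_{1},Zh_{2}$ generate the same reducing subspace for $R_{*}$ (your version fixes one $h_{0}$ and shows all $Zh$ land in the reducing subspace it generates), and for (3) the paper uses the same factorization $Y=WZ$, deriving the contradiction from $\|(R_{*}|\overline{Z\mathcal{H}})^{*n}X^{*}f\|\to 0$ rather than your iteration $W\mathcal{R}_{*}\subset\bigcap_{n}S^{n}H^{2}$, but these are equivalent. Your argument for (4) is also correct, though the paper obtains it in one line from (3): since $T\prec R_{*}|\overline{Z\mathcal{H}}$, the adjoint relation gives $(R_{*}|\overline{Z\mathcal{H}})^{*}\prec T^{*}$, and $(R_{*}|\overline{Z\mathcal{H}})^{*}$ is either a multiplicity-one unitary or $S^{*}$, hence cyclic, so $T^{*}$ inherits a cyclic vector.

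The genuine gap is exactly where you flag it: the converse in (2). Your Smirnov-class plan cannot work as stated. The identification $\mathcal{R}_{*}\cong L^{2}$ is only determined up to a unitary commuting with $M_{z}$, i.e.\ up to multiplication by a unimodular function, so there is no canonical copy of $H^{2}$ inside $\mathcal{R}_{*}$ to which $Zh_{0}$ could belong a priori. More concretely, knowing that $Zh=(u/v)Zh_{0}$ with $u,v\in H^{\infty}$ tells you nothing about whether the $M_{z}$-invariant subspace generated by $Zh_{0}$ is simply invariant (i.e.\ of Beurling type $qH^{2}$) or all of $L^{2}$; that distinction depends on whether $\log|Zh_{0}|\in L^{1}(\mathbb{T})$, which your hypotheses do not control. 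Showing that $\overline{Z\mathcal{H}}$ is a proper (hence nonreducing) subspace of $L^{2}$ is precisely the content to be proved, so the argument is circular. The paper closes this gap by invoking an external result of K\'erchy \cite{kerchy-shifts}: when $R_{*}$ is a bilateral shift, that theorem produces an invariant subspace $\mathcal{M}$ for $T$ with $T|\mathcal{M}\prec S$, and then Lemma~\ref{lem:cyclic-iff-restrictions-are} upgrades this to $T\prec S$. You should either appeal to that result or find an independent way to rule out $\overline{Z\mathcal{H}}=L^{2}$.
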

\begin{proof}
Given $h_{1},h_{2}\in\mathcal{H}\setminus\{0\}$, select $u_{1},u_{2}\in H^{\infty}\setminus\{0\}$
such that $u_{1}(T)h_{1}=u_{1}(T)h_{2}$. Then we have $u_{1}(R_{*})Zh_{1}=u_{2}(R_{*})Zh_{2}$.
Since $u_{1}(\zeta)$ and $u_{2}(\zeta)$ are different from zero
a.e. relative to the spectral measure of $R_{*}$, it follows that
the vectors $Zh_{1}$ and $Zh_{2}$ generate the same reducing space
for $R_{*}$. Therefore $R_{*}$ has a $*$-cyclic vector, and this
implies (1).

Next we prove (3). The fact that $T\prec R_{*}|\overline{Z\mathcal{H}}$
is immediate. If $\overline{Z\mathcal{H}}$ is not reducing, then
$R_{*}|\overline{Z\mathcal{H}}$ is unitarily equivalent to $S$ and
hence $T\prec S$. Conversely, if $T\prec S$, let $W$ be a quasiaffinity
such that $WX=SW$. For any $h\in\mathcal{H}$ we have \[
\|Wh\|=\lim_{n\to\infty}\|S^{n}Wh\|=\lim_{n\to\infty}\|WT^{n}h\|\le\|W\|\|Zh\|,\]
so that there exists an operator $X:\overline{Z\mathcal{H}}\to H^{2}$
such that $\|X\|\le\|W\|$ and $XZ=W$. Since the range of $X$ contains
the range of $W$, we have $X\ne0$. Pick a vector $f\in H^{2}$ such
that $X^{*}f\ne0$, and observe that\[
\lim_{n\to\infty}\|(R_{*}|\overline{Z\mathcal{H}})^{*n}X^{*}f\|=\lim_{n\to\infty}\|X^{*}S^{*n}f\|=0.\]
Therefore $R_{*}|\overline{Z\mathcal{H}}$ is not unitary, and consequently
$\overline{Z\mathcal{H}}\ne\mathcal{R}_{*}$.

Assume now that $T\prec S$. The fact that $R_{*}$ is a bilateral
shift follows from (3) because the only unitary operator of multiplicity
$1$ which has nonreducing invariant subspaces is the bilateral shift.
Conversely, if $R_{*}$ is a bilateral shift, the results of \cite{kerchy-shifts}
imply the existence of an invariant subspace $\mathcal{M}$ for $T$
such that $T|\mathcal{M}\prec S$. We deduce that $T\prec S$ by Lemma
\ref{lem:cyclic-iff-restrictions-are}. This proves (2).

Finally, (4) also follows from (3) because $(R_{*}|\overline{Z\mathcal{H}})^{*}$
has a cyclic vector.\end{proof}
\begin{cor}
\label{cor:S(theta)=00003Dprec-S}Assume that $\Theta=\left[\begin{array}{c}
\theta_{1}\\
\theta_{2}\end{array}\right]$ is inner and $*$-outer. Then $S(\Theta)\prec S$. More precisely,
the operator $Q:\mathcal{H}(\Theta)\to H^{2}$ defined by $Q(f_{1}\oplus f_{2})=\theta_{1}f_{2}-\theta_{2}f_{1}$,
$f_{1}\oplus f_{2}\in\mathcal{H}(\Theta)$, is a quasiaffinity and
$QS(\Theta)=SQ$.

More generally, we have $T\prec S$ whenever $T$ is a completely
nonunitary contraction, $H^{\infty}(T)$ is confluent, and $I-T^{*}T$
has finite rank.\end{cor}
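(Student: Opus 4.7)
The strategy for the first assertion is to verify directly that $Q$ is a quasiaffinity. Extend $Q$ to $\widetilde{Q}:H^{2}\oplus H^{2}\to H^{2}$ by the same formula $\widetilde{Q}(h_{1}\oplus h_{2})=\theta_{1}h_{2}-\theta_{2}h_{1}$; this is bounded since $\theta_{j}\in H^{\infty}$. Observe that $\widetilde{Q}$ vanishes on $\mathcal{H}(\Theta)^{\perp}=\{\theta_{1}u\oplus\theta_{2}u:u\in H^{2}\}$, because the associated $2\times 2$ ``determinant'' has equal columns. Given $f_{1}\oplus f_{2}\in\mathcal{H}(\Theta)$, writing $z(f_{1}\oplus f_{2})-S(\Theta)(f_{1}\oplus f_{2})=\theta_{1}u\oplus\theta_{2}u$ and applying $\widetilde{Q}$ yields $QS(\Theta)(f_{1}\oplus f_{2})=z\widetilde{Q}(f_{1}\oplus f_{2})=SQ(f_{1}\oplus f_{2})$.

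For injectivity, the $*$-outer hypothesis means $\theta_{1}\wedge\theta_{2}=1$ by Lemma \ref{lem:S(Theta)-confluent}; the equation $\theta_{1}f_{2}=\theta_{2}f_{1}$ combined with coprimality forces $f_{1}=\theta_{1}g$ and $f_{2}=\theta_{2}g$ for some $g\in H^{2}$, placing $f_{1}\oplus f_{2}$ in $\mathcal{H}(\Theta)^{\perp}\cap\mathcal{H}(\Theta)=\{0\}$. For dense range, $\overline{Q\mathcal{H}(\Theta)}$ is $S$-invariant, hence of the form $uH^{2}$ for some inner $u$. Since $\widetilde{Q}$ kills $\mathcal{H}(\Theta)^{\perp}$, we have $Q(P_{\mathcal{H}(\Theta)}(1\oplus 0))=\widetilde{Q}(1\oplus 0)=-\theta_{2}$ and $Q(P_{\mathcal{H}(\Theta)}(0\oplus 1))=\theta_{1}$, so $u$ divides both $\theta_{1}$ and $\theta_{2}$ and must therefore be constant.

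For the more general assertion, I would reduce to the Sz.-Nagy--Foias functional model. By Theorem \ref{thm:spectral-picture-for-confluence}(6), $T\in C_{10}$, so its characteristic function $\Theta_{T}$ is inner and $*$-outer; by Theorem \ref{thm:spectral-picture-for-confluence}(2), $T$ is Fredholm with ${\rm ind}(T)=-1$. The standard index identity $\dim\mathcal{D}_{T^{*}}-\dim\mathcal{D}_{T}=-{\rm ind}(T)$ for Fredholm contractions, combined with the hypothesis $\dim\mathcal{D}_{T}=n<\infty$, yields $\dim\mathcal{D}_{T^{*}}=n+1$. Hence $T$ is unitarily equivalent to $S(\Theta_{T})$ acting on $\mathcal{H}(\Theta_{T})=H^{2}(\mathbb{C}^{n+1})\ominus\Theta_{T}H^{2}(\mathbb{C}^{n})$, where $\Theta_{T}$ is an $(n+1)\times n$ inner $*$-outer matrix-valued function.

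Generalizing the construction above, define $Q:\mathcal{H}(\Theta_{T})\to H^{2}$ by $Q(f)=\det[\,\Theta_{T}\mid f\,]$, the $(n+1)\times(n+1)$ determinant formed by adjoining the column $f\in H^{2}(\mathbb{C}^{n+1})$ to $\Theta_{T}$. Multilinearity of the determinant gives $QS(\Theta_{T})=SQ$ and annihilation of $\Theta_{T}H^{2}(\mathbb{C}^{n})$ (two columns then become linearly dependent). Injectivity follows because $\det[\Theta_{T}\mid f]\equiv 0$ forces $f$ to lie pointwise in the column-span of $\Theta_{T}$, hence in $\Theta_{T}H^{2}(\mathbb{C}^{n})\cap\mathcal{H}(\Theta_{T})=\{0\}$, using the inner relation $\Theta_{T}^{*}\Theta_{T}=I$ a.e. on $\mathbb{T}$ to recover the coefficient $g\in H^{2}(\mathbb{C}^{n})$ as $\Theta_{T}^{*}f$. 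For dense range, evaluating $Q$ on projections of the standard basis vectors of $\mathbb{C}^{n+1}$ produces (up to sign) the $n\times n$ maximal minors of $\Theta_{T}$, so the inner $u$ generating the closure of the range divides every such minor. The main obstacle is to verify that the greatest common inner divisor of these maximal minors is trivial---this is the higher-rank analogue of $\theta_{1}\wedge\theta_{2}=1$, and I expect it to follow either from a Smith-type factorization for matrix-valued inner functions, or directly by applying Theorem \ref{thm:spectral-picture-for-confluence}(4) to the $S(\Theta_{T})$-invariant subspace generated by any common inner divisor of those minors.
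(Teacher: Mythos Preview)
Your proof of the first assertion is correct and takes a more direct route than the paper. The paper does not verify the properties of $Q$ by hand; instead it identifies the $*$-residual part $\mathcal{R}_{*}$ of the minimal unitary dilation of $S(\Theta)$ inside $L^{2}\oplus L^{2}$, computes $P_{\mathcal{R}_{*}}$ as multiplication by $I-\Theta\Theta^{*}$, and shows that $\overline{P_{\mathcal{R}_{*}}\mathcal{H}(\Theta)}=\{(-\overline{\theta_{2}}u)\oplus\overline{\theta_{1}}u:u\in H^{2}\}$ is a proper subspace of $\mathcal{R}_{*}$, so that Proposition~\ref{pro:conf-implies-R*cyclic}(3) applies. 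The formula for $Q$ then drops out of the identity $P_{\mathcal{R}_{*}}(f_{1}\oplus f_{2})=(-\overline{\theta_{2}}\oplus\overline{\theta_{1}})\,Q(f_{1}\oplus f_{2})$. Your Beurling-plus-coprimality argument is shorter and more elementary; the only point worth spelling out is that in the injectivity step the functions $\theta_{j}$ are not inner, so ``coprimality forces $f_{j}=\theta_{j}g$ with $g\in H^{2}$'' also uses $|\theta_{1}|^{2}+|\theta_{2}|^{2}=1$ a.e.\ to get $|g|^{2}=|f_{1}|^{2}+|f_{2}|^{2}$ a.e., whence $g\in N_{+}\cap L^{2}=H^{2}$.

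For the general assertion your determinant construction is natural but, as you yourself flag, incomplete: the triviality of the greatest common inner divisor of the maximal minors is exactly the higher-rank substitute for $\theta_{1}\wedge\theta_{2}=1$ and would require a separate argument, and the injectivity step (that $g=\Theta_{T}^{*}f$, computed on $\mathbb{T}$ via conjugates of $H^{\infty}$ entries, actually lies in $H^{2}(\mathbb{C}^{n})$) is likewise not automatic. The paper avoids both issues by not constructing $Q$ at all in the finite-rank case. It simply observes that since $\Theta_{T}$ is an $(n+1)\times n$ inner function, $I-\Theta_{T}(\zeta)\Theta_{T}(\zeta)^{*}$ has rank $1$ for a.e.\ $\zeta\in\mathbb{T}$; by \cite[Section~VI.6]{Sz-N-F} this makes the $*$-residual part $R_{*}$ a bilateral shift of multiplicity $1$, and Proposition~\ref{pro:conf-implies-R*cyclic}(2) then yields $T\prec S$ immediately.
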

\begin{proof}
We will show that $\overline{P_{\mathcal{R}_{*}}\mathcal{H}(\Theta)}\ne\mathcal{R}_{*}$.
To do this, we observe first that the minimal unitary dilation of
$S(\Theta)$ is the operator $U\oplus U$ on $L^{2}\oplus L^{2}$.
The space $\mathcal{R}_{*}$ is the orthogonal complement of the smallest
reducing space for $U\oplus U$ containing $\{\theta_{1}u\oplus\theta_{2}u:u\in H^{2}\}$.
Thus \[
\mathcal{R}_{*}=(L^{2}\oplus L^{2})\ominus\{\theta_{1}u\oplus\theta_{2}u:u\in L^{2}\},\]
 and it follows that $P_{\mathcal{R}_{*}}$ is the operator of pointwise
multiplication by the matrix\[
I-\Theta\Theta^{*}=\left[\begin{array}{cc}
|\theta_{2}|^{2} & -\overline{\theta_{2}}\theta_{1}\\
-\theta_{2}\overline{\theta_{1}} & |\theta_{1}|^{2}\end{array}\right].\]
Finally, we have $P_{\mathcal{R}_{*}}\mathcal{H}(\Theta)=P_{\mathcal{R}_{*}}(H^{2}\oplus H^{2})$,
and therefore $\overline{P_{\mathcal{R}_{*}}\mathcal{H}(\Theta)}$
is the invariant subspace for $U$ generated by $P_{\mathcal{R}_{*}}(1\oplus0)$
and $P_{\mathcal{R}_{*}}(0\oplus1)$. These two vectors are precisely
\begin{eqnarray*}
|\theta_{2}|^{2}\oplus(-\theta_{2}\overline{\theta_{1}}) & = & (-\overline{\theta_{2}}u)\oplus\overline{\theta_{1}}u,\\
(-\overline{\theta_{2}}\theta_{1})\oplus|\theta_{1}|^{2} & = & (-\overline{\theta_{2}}v)\oplus\overline{\theta_{1}}v,\end{eqnarray*}
 with $u=-\theta_{2}$ and $v=\theta_{1}$. Since $\theta_{1}$ and
$\theta_{2}$ do not have nonconstant common inner divisors, the invariant
subspace for $S$ they generate is the entire $H^{2}$. It follows
that\[
\overline{P_{\mathcal{R}_{*}}\mathcal{H}(\Theta)}=\{(-\overline{\theta_{2}}u)\oplus\overline{\theta_{1}}u:u\in H^{2}\},\]
and $R_{*}|\overline{P_{\mathcal{R}_{*}}\mathcal{H}(\Theta)}$ is
unitarily equivalent to $S$. The final assertion is verified by noting
that \[
P_{\mathcal{R}_{*}}(f_{1}\oplus f_{2})=(-\overline{\theta_{2}}Q(f_{1}\oplus f_{2}))\oplus(\overline{\theta_{1}}Q(f_{1}\oplus f_{2}))\]
for $f_{1}\oplus f_{2}\in\mathcal{H}(\Theta)$.

To verify the last assertion, denote by $n$ the rank of $I-T^{*}T$,
and observe that the characteristic function $\Theta_{T}$ is inner,
$*$-outer, and it coincides with an $(n+1)\times n$ matrix over
$H^{\infty}$. Indeed, $\Theta_{T}(0)$ is a Fredholm operator of
index $-1$. It follows that $I-\Theta_{T}(\zeta)\Theta_{T}(\zeta)^{*}$
has rank $1$ for a.e. $\zeta\in\mathbb{T}$, and therefore $\mathcal{R}_{*}$
is a bilateral shift by \cite[Section VI.6]{Sz-N-F}. The result follows
now from \ref{pro:conf-implies-R*cyclic}(2).\end{proof}
\begin{cor}
\label{cor:S(theta)not-cyclic}Assume that $\Theta=\left[\begin{array}{c}
\theta_{1}\\
\theta_{2}\end{array}\right]$ is inner and $*$-outer.
\begin{enumerate}
\item If $f_{1}\oplus f_{2}\in\mathcal{H}(\Theta)$ is cyclic for $S(\Theta)$,
then $\theta_{1}f_{2}-\theta_{2}f_{1}$ is an outer function.
\item Conversely, if $\theta_{1}f_{2}-\theta_{2}f_{1}$ is outer for some
$f_{1},f_{2}\in H^{2}$, then $P_{\mathcal{H}(\Theta)}(f_{1}\oplus f_{2})$
is cyclic for $S(\Theta)$.
\item There exists $\Theta$ such that $S(\Theta)$ does not have a cyclic
vector.
\item We have $S(\Theta)\sim S$ if and only if $S(\Theta)$ has a cyclic
vector.
\end{enumerate}
\end{cor}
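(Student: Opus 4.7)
The central tool is the quasiaffinity $Q\colon\mathcal{H}(\Theta)\to H^{2}$ of Corollary \ref{cor:S(theta)=00003Dprec-S}, given by $Q(f_{1}\oplus f_{2})=\theta_{1}f_{2}-\theta_{2}f_{1}$ and satisfying $QS(\Theta)=SQ$. Since $H^{\infty}(S(\Theta))$ is confluent by Lemma \ref{lem:S(Theta)-confluent}, I can invoke Theorem \ref{thm:spectral-picture-for-confluence}(4). For (1), if $g=f_{1}\oplus f_{2}$ is cyclic for $S(\Theta)$ then $\bigvee_{n\ge 0}S(\Theta)^{n}g=\mathcal{H}(\Theta)$; applying $Q$, using the intertwining, and using the density of $Q\mathcal{H}(\Theta)$ in $H^{2}$, one deduces $\bigvee_{n\ge 0}S^{n}Qg=H^{2}$, forcing $Qg=\theta_{1}f_{2}-\theta_{2}f_{1}$ to be outer by Beurling's theorem.

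For (2), note that the formula defining $Q$ extends to a bounded map $\widetilde Q\colon H^{2}\oplus H^{2}\to H^{2}$ that annihilates the subspace $\{\theta_{1}u\oplus\theta_{2}u:u\in H^{2}\}=\mathcal{H}(\Theta)^{\perp}$; hence, setting $v=P_{\mathcal{H}(\Theta)}(f_{1}\oplus f_{2})$, we have $Qv=\theta_{1}f_{2}-\theta_{2}f_{1}$, which is outer by hypothesis. Suppose, towards a contradiction, that $\mathcal{M}:=\bigvee_{n\ge 0}S(\Theta)^{n}v\ne\mathcal{H}(\Theta)$; Theorem \ref{thm:spectral-picture-for-confluence}(4) then produces a nonconstant inner function $m$ with $\mathcal{M}=\overline{m(S(\Theta))\mathcal{H}(\Theta)}$, whence the intertwining gives $Q\mathcal{M}\subseteq mH^{2}\subsetneq H^{2}$, contradicting $\bigvee_{n}S^{n}Qv=H^{2}$ (which holds because $Qv$ is outer).

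For (4), Corollary \ref{cor:S(theta)=00003Dprec-S} gives $S(\Theta)\prec S$, so $S(\Theta)\sim S$ is equivalent to $S\prec S(\Theta)$, which by Proposition \ref{pro:S(theta)-prec-confluenT}(2) (applicable because $H^{\infty}(S(\Theta))$ is confluent) is in turn equivalent to $S(\Theta)$ having a cyclic vector.

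Finally, (3) is the main obstacle of the proof. By (4), I need an inner $*$-outer $\Theta$ with $S(\Theta)\not\sim S$; equivalently, by (1) and (2), I seek $\theta_{1},\theta_{2}\in H^{\infty}$ satisfying $|\theta_{1}|^{2}+|\theta_{2}|^{2}=1$ a.e.\ on $\mathbb{T}$ and $\theta_{1}\wedge\theta_{2}=1$, such that the subspace $\theta_{1}H^{2}+\theta_{2}H^{2}$ of $H^{2}$ contains no outer function. While (1), (2), (4) follow transparently from the intertwining set-up, this existence claim requires a genuine construction. A natural route is to take $\theta_{1}=c_{1}B_{1}$ and $\theta_{2}=c_{2}B_{2}$ with $|c_{1}|^{2}+|c_{2}|^{2}=1$ and $B_{1},B_{2}$ Blaschke products sharing no zeros but violating the corona condition strongly enough that every element of $B_{1}H^{2}+B_{2}H^{2}$ inherits a vanishing obstruction to being outer; the crux is to verify that such Blaschke products exist and that they yield a $\Theta$ satisfying all three conditions simultaneously.
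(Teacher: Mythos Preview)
Your arguments for (1) and (4) are correct and coincide with the paper's. Your proof of (2) is also correct but differs from the paper's: you invoke Theorem~\ref{thm:spectral-picture-for-confluence}(4) to write a putative proper cyclic subspace as $\overline{m(S(\Theta))\mathcal H(\Theta)}$ and then push through $Q$ to land in $mH^{2}$, contradicting the outerness of $Qv$. The paper instead argues directly: after reducing to $f_{1},f_{2}\in H^{\infty}$ by multiplying by an outer function, it takes $g_{1}\oplus g_{2}\in\mathcal H(\Theta)$ orthogonal to the cyclic subspace, observes that both $\overline{\theta_{1}}g_{1}+\overline{\theta_{2}}g_{2}$ and $\overline{f_{1}}g_{1}+\overline{f_{2}}g_{2}$ lie in $L^{2}\ominus H^{2}$, and takes suitable $H^{\infty}$ linear combinations to conclude $\overline{u}g_{j}\in L^{2}\ominus H^{2}$ with $u=\theta_{1}f_{2}-\theta_{2}f_{1}$ outer, forcing $g_{j}=0$. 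Your route is cleaner and makes better use of the structural machinery already developed; the paper's is more self-contained at this point.

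The genuine gap is (3), as you yourself flag. Your heuristic (choose coprime Blaschke products $B_{1},B_{2}$ that violate the corona condition and hope every element of $B_{1}H^{2}+B_{2}H^{2}$ fails to be outer) is exactly the right intuition, but you stop short of a construction, and the verification is not routine. The paper does not carry out a construction either; it simply invokes Nordgren's result \cite{nordgren} that there exist relatively prime inner functions $m_{1},m_{2}$ such that $m_{1}H^{2}+m_{2}H^{2}$ contains no outer function, and then sets $\theta_{1}=\tfrac{3}{5}m_{1}$, $\theta_{2}=\tfrac{4}{5}m_{2}$. So the missing ingredient in your proof is precisely this citation (or an independent construction along Nordgren's lines); with it, (3) is immediate from (1).
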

\begin{proof}
With the notation of Corollary \ref{cor:S(theta)=00003Dprec-S}, $Q(f_{1}\oplus f_{2})$
must be cyclic for $S$ if $f_{1}\oplus f_{2}$ is cyclic for $S(\Theta)$.
This proves (1).

Conversely, assume that $u=\theta_{1}f_{2}-\theta_{2}f_{1}$ is outer
for some $f_{1},f_{2}\in H^{2}$. Upon multiplying $f_{1},f_{2}$
by some outer function, we may assume that $f_{1},f_{2}\in H^{\infty}$.
Let $g_{1}\oplus g_{2}\in\mathcal{H}(\Theta)$ be a vector orthogonal
to $\bigvee\{S(\Theta)^{n}P_{\mathcal{H}(\Theta)}(f_{1}\oplus f_{2}):n\ge0\}$.
We have then\[
\langle g_{1}\oplus g_{2},\theta_{1}p\oplus\theta_{2}p\rangle=\langle g_{1}\oplus g_{2},f_{1}p\oplus f_{2}p\rangle=0\]
for every polynomial $p$. Equivalently, $\overline{\theta_{1}}g_{1}+\overline{\theta_{2}}g_{2}$
and $\overline{f_{1}}g_{1}+\overline{f_{2}}g_{2}$ belong to $L^{2}\ominus H^{2}$,
and therefore the functions\begin{eqnarray*}
\overline{u}g_{1} & = & \overline{f_{2}}(\overline{\theta_{1}}g_{1}+\overline{\theta_{2}}g_{2})-\overline{\theta_{2}}(\overline{f_{1}}g_{1}+\overline{f_{2}}g_{2}),\\
\overline{u}g_{2} & = & \overline{\theta_{1}}(\overline{f_{1}}g_{1}+\overline{f_{2}}g_{2})-\overline{f_{1}}(\overline{\theta_{1}}g_{1}+\overline{\theta_{2}}g_{2})\end{eqnarray*}
are also in $L^{2}\ominus H^{2}$. Thus $\langle g_{j},up\rangle=0$
for all polynomials $p$, and hence $g_{j}=0$, $j=1,2$, because
$u$ is outer. Assertion (2) follows.

To prove (3), let $m_{1}$ and $m_{2}$ be two relatively prime inner
functions, and set $\theta_{1}=\frac{3}{5}m_{1}$ and $\theta_{2}=\frac{4}{5}m_{2}$.
Nordgren \cite{nordgren} showed that it is possible to choose $m_{1}$
and $m_{2}$ so that no function of the form $m_{1}f_{2}-m_{2}f_{1}$
is outer if $f_{1},f_{2}\in H^{2}$. The corresponding operator $S(\Theta)$
does not have a cyclic vector. Finally (4) follows from Corollary
\ref{cor:S(theta)=00003Dprec-S} and Proposition \ref{pro:S(theta)-prec-confluenT}(2).
\end{proof}
Let us also note a related result which follows easily from \cite{Sn-F-similarity}.
\begin{prop}
\label{pro:S(theta)-sim-toS}Assume that $\Theta=\left[\begin{array}{c}
\theta_{1}\\
\theta_{2}\end{array}\right]$ is inner and $*$-outer. The operator $S(\Theta)$ is similar to
$S$ if and only if there exist $f_{1},f_{2}\in H^{\infty}$ such
that $\theta_{1}f_{2}-\theta_{2}f_{1}=1$.\end{prop}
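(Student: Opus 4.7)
The plan is to establish the two implications separately, with the quasiaffinity $Q:\mathcal{H}(\Theta)\to H^{2}$ from Corollary~\ref{cor:S(theta)=00003Dprec-S} as the natural candidate for the similarity on the sufficient side, and the Sz.-Nagy--Foias similarity criterion \cite{Sn-F-similarity} as the main input on the necessary side. For the sufficient direction, supposing $f_{1},f_{2}\in H^{\infty}$ satisfy $\theta_{1}f_{2}-\theta_{2}f_{1}=1$, the candidate right inverse of $Q$ is the bounded map $R:H^{2}\to\mathcal{H}(\Theta)$ defined by $Rh=P_{\mathcal{H}(\Theta)}(f_{1}h\oplus f_{2}h)$, which manifestly satisfies $RS=S(\Theta)R$. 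The computation $QR=I_{H^{2}}$ uses the observation that the formula $(a\oplus b)\mapsto\theta_{1}b-\theta_{2}a$ extends $Q$ to a bounded linear map on all of $H^{2}\oplus H^{2}$ vanishing on the complement $\{\theta_{1}u\oplus\theta_{2}u:u\in H^{2}\}$, so $Q(P_{\mathcal{H}(\Theta)}(g_{1}\oplus g_{2}))=\theta_{1}g_{2}-\theta_{2}g_{1}$ for all $g_{1},g_{2}\in H^{2}$; with $g_{i}=f_{i}h$ this yields $QRh=(\theta_{1}f_{2}-\theta_{2}f_{1})h=h$. Since $Q$ is already injective and intertwines $S(\Theta)$ with $S$, the open mapping theorem promotes $Q$ itself to a bounded invertible intertwiner, giving $S(\Theta)\sim S$.

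For the necessary direction, I would assume $S(\Theta)$ is similar to $S$. Because $\Theta$ is $*$-outer, $S(\Theta)$ is of class $C_{10}$ with defect dimension $\dim\mathcal{D}_{S(\Theta)}=1$, so its minimal isometric dilation is a unilateral shift of multiplicity one, and the characteristic function of $S(\Theta)$ is (unitarily equivalent to) $\Theta$ itself. The Sz.-Nagy--Foias similarity theorem \cite{Sn-F-similarity} characterizes similarity of such a contraction to its minimal isometric dilation by the existence of a bounded analytic left inverse of the characteristic function; applied to $\Theta$ this produces $\psi_{1},\psi_{2}\in H^{\infty}$ with $\psi_{1}\theta_{1}+\psi_{2}\theta_{2}=1$, and the substitution $f_{2}=\psi_{1}$, $f_{1}=-\psi_{2}$ gives the desired identity.

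The hard part is exactly this backward implication, because it is not automatic that a generic invertible intertwiner $V$ forces $Q$ itself to be invertible. Applying commutant lifting to $V$ yields $g_{1},g_{2}\in H^{\infty}$ with $Vh=P_{\mathcal{H}(\Theta)}(g_{1}h\oplus g_{2}h)$, and hence $QV=T_{u}$ for $u=\theta_{1}g_{2}-\theta_{2}g_{1}$; invertibility of $V$ alone only tells us that $u$ is outer, so that the range of $Q$ equals the dense (and possibly non-closed) subspace $uH^{2}$ of $H^{2}$. Promoting $u$ from outer to a unit of $H^{\infty}$ is the corona-flavored content I would extract from \cite{Sn-F-similarity} rather than reprove; once $1/u\in H^{\infty}$, the rescaling $f_{i}=g_{i}/u$ delivers the Bezout identity with right-hand side $1$.
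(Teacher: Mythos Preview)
Your argument is correct in both directions, but it differs from the paper's proof, which is a two-line application of the Sz.-Nagy--Foias similarity criterion \cite{Sn-F-similarity} in \emph{both} directions at once: $S(\Theta)$ is similar to an isometry if and only if $\Theta$ is left invertible over $H^{\infty}$, and the only candidate isometry is $S$ since $\dim\ker S(\Theta)^{*}=1$. The paper therefore treats the sufficiency and the necessity symmetrically, both as consequences of the cited theorem.

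Your treatment of sufficiency is genuinely more elementary and more informative: from the Bezout identity you exhibit an explicit bounded right inverse $R$ of the quasiaffinity $Q$ of Corollary~\ref{cor:S(theta)=00003Dprec-S}, so that $Q$ itself becomes the similarity. This avoids invoking the (nontrivial) direction of \cite{Sn-F-similarity} that produces a similarity from left invertibility, and it identifies the intertwiner concretely. For the necessary direction you end up at the same place as the paper---citing \cite{Sn-F-similarity} for the left inverse---and your closing paragraph correctly explains why the naive commutant-lifting route only yields $\theta_{1}g_{2}-\theta_{2}g_{1}$ outer rather than a unit, which is exactly the gap the similarity criterion closes.
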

\begin{proof}
It was shown in \cite{Sn-F-similarity} that $S(\Theta)$ is similar
to an isometry if and only if $\Theta$ is left invertible. To conclude,
one must observe that the only possible isometry is a unilateral shift
of multiplicity 1.
\end{proof}
Some of the statements of Proposition \ref{pro:conf-implies-R*cyclic}
remain valid for arbitrary completely nonunitary contractions. The
proof of the following proposition follows easily from the above arguments,
along with the corresponding properties of $S$.
\begin{prop}
\label{pro:arbitrary-qsim-S}Let $T$ be a completely nonunitary contraction
such that $T\sim S$. Then $T$ is of class $C_{10}$, both $T$ and
$T^{*}$ have cyclic vectors, and the $*$-residual part $R_{*}$
of the minimal unitary dilation of $T$ is a bilateral shift of multiplicity
$1$.
\end{prop}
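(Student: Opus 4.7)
The plan is to reduce everything to the corresponding (known) properties of the unilateral shift $S$, using the two quasiaffinities furnished by $T\sim S$: pick $X\in\mathcal{L}(\mathcal{H},H^{2})$ and $Y\in\mathcal{L}(H^{2},\mathcal{H})$ injective with dense range so that $XT=SX$ and $YS=TY$.

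First I would check $T\in C_{10}$. Given $h\in\mathcal{H}\setminus\{0\}$, injectivity of $X$ gives $Xh\ne0$, and the isometric property of $S$ yields
\[
\|XT^{n}h\|=\|S^{n}Xh\|=\|Xh\|,
\]
so $\|T^{n}h\|$ is bounded below and in particular does not tend to $0$; thus $T\in C_{1\cdot}$. For $T\in C_{\cdot 0}$, the adjoint relation $T^{*n}X^{*}=X^{*}S^{*n}$ shows $T^{*n}\to 0$ strongly on the dense subspace $X^{*}H^{2}$ (because $S^{*n}\to 0$ strongly on $H^{2}$), and the uniform bound $\|T^{*n}\|\le 1$ extends this to all of $\mathcal{H}$.

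Next I would transport cyclic vectors through the intertwiners. Since the constant function $1$ is cyclic for $S$, the vector $Y(1)$ is cyclic for $T$: for any polynomial $p$, $p(T)Y(1)=Yp(S)(1)$, and $Y\{p(S)(1):p\text{ polynomial}\}$ is dense because $Y$ has dense range. Dually, any $v\in H^{2}$ cyclic for $S^{*}$ (for instance $v=(1-\overline{\lambda}z)^{-1}$ with $\lambda\in\mathbb{D}\setminus\{0\}$) produces $X^{*}v\in\mathcal{H}$ cyclic for $T^{*}$, via $T^{*n}X^{*}v=X^{*}S^{*n}v$ and the density of $X^{*}H^{2}$.

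Finally, for the statement about $R_{*}$ I would invoke the earlier machinery rather than redo the residual-part analysis. By Lemma \ref{lem:quasisim-for-cnu-and-for-hinfty}(2) the hypothesis $T\sim S$ upgrades to $H^{\infty}(T)\sim H^{\infty}(S)$. The algebra $H^{\infty}(S)=\mathcal{W}_{S}$ is confluent: $1\in H^{2}$ is rationally strictly cyclic (Lemma \ref{lem:some-algebras-with-RSC}) and every nonzero analytic Toeplitz operator is injective, so Proposition \ref{pro:confluent=00003Drsc+noKer} applies. Confluence is preserved under quasisimilarity by Proposition \ref{pro:facts-about-quasisimilarity}(5), so $H^{\infty}(T)$ is confluent as well. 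Since $T\sim S$ yields in particular $T\prec S$, Proposition \ref{pro:conf-implies-R*cyclic}(2) delivers directly that $R_{*}$ is a bilateral shift of multiplicity $1$. The only place that required care was matching the direction of each intertwining when importing the shift's properties; no step presents a genuine obstacle.
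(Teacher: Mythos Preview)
Your overall strategy is exactly what the paper intends: it simply says the result ``follows easily from the above arguments, along with the corresponding properties of $S$,'' and your reduction via the intertwiners $X,Y$ together with the confluence route to Proposition~\ref{pro:conf-implies-R*cyclic}(2) carries this out correctly.

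There is, however, one concrete slip. The vector $v=(1-\overline{\lambda}z)^{-1}$ is \emph{not} cyclic for $S^{*}$; it is an eigenvector, since $S^{*}k_{\lambda}=\overline{\lambda}k_{\lambda}$ (you quoted this very identity earlier in the paper, in the proof of Proposition~\ref{pro:nonCPalgebras}(3)). Thus the span of $\{S^{*n}v:n\ge0\}$ is one-dimensional and the transport argument collapses for this particular choice. The fix is immediate: replace $v$ by any genuine $S^{*}$-cyclic vector (equivalently, any $f\in H^{2}$ lying in no model space $\mathcal{H}(\theta)$ for nonconstant inner $\theta$); the existence of such vectors is classical. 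Alternatively, once you have established that $H^{\infty}(T)$ is confluent, you can simply invoke Proposition~\ref{pro:conf-implies-R*cyclic}(4), which already gives that $T^{*}$ has a cyclic vector, and avoid choosing an explicit $v$ altogether.
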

The converse of this proposition is not true. Indeed, it was shown
in \cite{berker} (see also \cite{ker}) that there exist operators
$T$ of class $C_{10}$, with a cyclic vector, such that $R_{*}$
is a bilateral shift of multiplicity $1$, and $\sigma(T)\not\supset\mathbb{D}$.
For such operators we will have $R_{*}^{*}\prec T^{*}$, so that $T^{*}$
also has a cyclic vector, but $T\not\prec S$.

The converse does however hold provided that $H^{\infty}(T)$ is confluent.
This follows from Propositions \ref{pro:S(theta)-prec-confluenT}(2)
and \ref{pro:conf-implies-R*cyclic}(2).


\begin{thebibliography}{21}
\bibitem{A-D} M. B. Abrahamse and R. G. Douglas, A class of subnormal
operators related to multiply connected domains, \emph{Adv. Math.}
\textbf{19} (1976), 106--148.

\bibitem{arv}W. B. Arveson, A density theorem for operator algebras,
\emph{Duke Math. J.} \textbf{34} (1967), 635--647.

\bibitem{Berco} H. Bercovici, \emph{Operator Theory and Arithmetic
on $H^{\infty}$}, American Mathematical Society, Providence, RI,
1988.

\bibitem{B-F-P} H. Bercovici, C. Foias, and C. Pearcy, \emph{Dual
algebras with applications to invariant subspaces and dilation theory},
American Mathematical Society, Providence, RI, 1985.

\bibitem{berker}H. Bercovici and L. K\'erchy, Spectral behaviour
of $C_{10}$-contractions, preprint.

\bibitem{brown}S. W. Brown, Contractions with spectral boundary,
\emph{Integral Equations Operator Theory} \textbf{11} (1988), 49--63.

\bibitem{bcp}S. W. Brown, B. Chevreau, and C. Pearcy, On the structure
of contraction operators. II, \emph{J. Funct. Anal.} \textbf{76} (1988),
no. 1, 30--55.

\bibitem{C-D} M. J. Cowen and R. G. Douglas, Complex geometry and
operator theory, \emph{Acta Math.} \textbf{141 (}1978), 187--261.

\bibitem{fish}S. D. Fisher, \emph{Function theory on planar domains.
A second course in complex analysis}, John Wiley \& Sons, New York,
1983.

\bibitem{Foias} C. Foias, A classification of doubly cyclic operators,
\emph{Colloquia Math. Soc. J. Bolyai.} \textbf{5}, Hilbert Space Operators,
Tihany, 1970, 155--161.

\bibitem{ker}L. K\'erchy, On the spectra of contractions belonging
to special classes, \emph{J. Funct. Anal.} \textbf{67} (1986), 153--166.

\bibitem{kerchy-shifts}---------, Shift-type invariant subspaces
of contractions, \emph{J. Funct. Anal.} \textbf{246} (2007), 281--301.

\bibitem{nordgren}E. A. Nordgren, The ring $N_{+}$ is not adequate,
\emph{Acta Sci. Math. (Szeged)} \textbf{36} (1974), 203--204.

\bibitem{rad-ros}H. Radjavi and P. Rosenthal, \emph{Invariant subspaces.
Second edition}, Dover Publications, Mineola, NY, 2003.

\bibitem{Sz-N-F} B. Sz.-Nagy and C. Foias, \emph{Harmonic Analysis
of Operators on Hilbert Space}, North Holland, Amsterdam, 1970.

\bibitem{modele-de-j}---------, Mod\`ele de Jordan pour une classe
d'op\'erateurs de l'espace de Hilbert, \emph{Acta Sci. Math. (Szeged)}
\textbf{31} (1970), 91--115.

\bibitem{local-C_0}---------, Local characterization of operators
of class $C_{0}$, \emph{J. Funct. Anal.} \textbf{8} (1971), 76--81.

\bibitem{complements2}---------, Compl\'ements à l'\'etude des
op\'erateurs de classe $C_{0}$. II, \emph{Acta Sci. Math. (Szeged)}
\textbf{33} (1972), 113--116.

\bibitem{model-c-dot-0}---------, Jordan model for contractions of
class $C_{\cdot0}$, \emph{Acta Sci. Math. (Szeged)} \textbf{36} (1974),
305--322.

\bibitem{injections} ---------, Injections of shifts into strict
contractions, \emph{Linear Operators and Approximation. II}, Birkh\"auser,
Basel, 1974, 29--37.

\bibitem{Sn-F-similarity}---------, On contractions similar to isometries
and Toeplitz operators, \emph{Ann. Acad. Sci. Fenn. Ser. A I Math.}
\textbf{2} (1976), 553--564.
\end{thebibliography}
\end{document}